\documentclass[10pt]{amsart}

\usepackage{amsmath,amssymb,graphicx,epsfig}
\usepackage{amsmath}
\usepackage{amssymb}
\usepackage{framed}
\usepackage{subcaption}
\usepackage{graphics}
\usepackage{amsmath,amssymb,graphicx,epsfig}
\usepackage{color}
\usepackage[colorlinks=true,linkcolor=blue]{hyperref} 

\newtheorem{theorem}{{\bf Theorem}}[section]
\newtheorem{lemma}[theorem]{{\bf Lemma}}
\newtheorem{cor}[theorem]{{\bf Corollary}}

\numberwithin{equation}{section}

\title[Inverse scattering in an asymptotically flat multilayer domain]
{Inverse scattering in an asymptotically flat multilayer domain}

\date{\today}

\author{Michel Cristofol$^1$}
\author{Yves Dermenjian$^2$}
\author{Hiroshi Isozaki$^3$}
\author{Olivier Poisson$^4$}

\address
{$^1$Aix-Marseille Universit{\'e}, CNRS, I2M, Marseille, France, michel.cristofol@univ-amu.fr}

\address
{$^2$Aix-Marseille Universit{\'e}, CNRS, I2M, Marseille, France, yves.dermenjian@univ-amu.fr}
\address
{$^3$Department of Pure and Applied Sciences, University of Tsukuba, Tsukuba, Tennoudai, 1-1-1, 305-0032, Japan, isozakih@math.tsukuba.ac.jp}
\address
{$^4$Aix-Marseille Universit{\'e}, CNRS, I2M, Marseille, France, olivier.poisson@univ-amu.fr}
\begin{document}

\baselineskip 14pt
\maketitle

\begin{abstract}
We consider a scattering problem for a wave equation $\partial_t^2 u = \frac{1}{\sqrt{g}}\partial_i(\sqrt{g}g^{ij}\partial_j)u$ in a multilayer domain $\Omega \subset {\bf R}^{n+1}_x = {\bf R}^n_y \times {\bf R}^1_{x^{n+1}}$ of the form $\Omega = \mathcal K \cup \Omega_1 \cup \cdots \cup \Omega_N$, where $\mathcal K$ is a bounded open set and $\Omega_k$ is asymptotically equal to a slab domain ${\bf R}^n \times (c_k,c_k +  d_k)$ as $|y| \to \infty$. Assuming that $\partial_x^{\alpha}\big(g_{ij}(x) - \delta_{ij}\big) = O(|x|^{-|\alpha| - \delta_0}), \ \delta_0 > 1, \forall \alpha$, we show that $\Omega$ and $g^{ij}$ are determined by one diagonal component $S_{11}(\lambda)$, for all energies, of the S-matrix associated with the slab $\Omega_1$, provided $\Omega_1$ is flat: $\Omega_1 \cap \{|y| > R\} = \{|y| > R\} \times (c_1, c_1+d_1)$ for some constants $c_1, d_1, R > 0$,  and the metric is Euclidean on $\Omega_1\cap \{|y| > R\}$.  
\end{abstract}

\section{Introduction}
\label{SectionIntro}
Consider a domain $\Omega \subset {\bf R}^{n+1}_x = {\bf R}^n_y \times {\bf R}^1_{x_{n+1}}$, $x = (y,x_{n+1})$,  as in Figure \ref{Fig.1} and $\Delta_y + (\partial_{n+1})^2$ in $\Omega$.
 Assume that there exists a constant $C > 0$ such that $\Omega \subset \{|x_{n+1}| < C\}$ and there exist $f_{\pm}(y) \in C^{\infty}({\bf R}^n)$ and constants $C_- < C_+$ such that  the boundary $\partial \Omega = \Gamma_+ \cup \Gamma_-$ is written as $x_{n+1} = f_{\pm}(y)$ with $\partial_y^{\alpha}(f_{\pm}(y) - C_{\pm}) = O( |y| ^{-|\alpha| - \delta_0})$ as $|y| \to \infty$ for a constant $\delta_0 > 1$, $\forall \alpha$.
 Then, without loss of generality, we can assume that there exists $R_0 > 0$ such that
 $\Omega \cap \{|y| > R_0\}$ is diffeomorphic to $\{|y| > R_0\} \times (0,d)$, $d= C_+ - C_-$,  equipped
 with the Laplacian
\begin{equation}
- H :=  \Delta_G = \frac{1}{\sqrt{g}}\partial_i\big(\sqrt{g} g^{ij}\partial_j\big),
\nonumber
\end{equation}
 where $G = ( g_{ij}) = (g^{ij})^{-1}$ is a Riemannian metric on $\Omega$, $\partial_i = \partial/\partial y_i$,
 $i = 1, \dots, n$, $\partial_{n+1} = \partial_{x_{n+1}}$,  and
\begin{equation}
\partial_x^{\alpha}\big(g_{ij}(x) - \delta_{ij}\big) = O(|x|^{-|\alpha| - \delta_0}), \quad \delta_0 > 1, \quad \forall \alpha. 
\label{Condgij(x)-edltaij}
\end{equation}
 We call such $\Omega$ an asymptotically flat slab, 
 and $\textcolor{blue}{d}$ the thickness of the slab.
 In the case that $\Omega_0 = {\bf R}^n\times (0,1)$ equipped with the Euclidean metric
 and the Neumann boundary condition on $x_{n+1} = 0,1$, we call it a model slab.
 

 \begin{figure}[]
\setlength{\unitlength}{0.6cm}
\centering
\begin{subfigure}{0.49\textwidth}
\begin{picture}(9,6)
\put(1,2){\vector(1,0){8}}
\put(5,-1){\vector(0,1){6}}
\put(5,5.3){${\bf R}^1_{x_{n+1}}$}
\put(9.5, 2){${\bf R}^n_y$}
\bezier{200}(3,3)(5,4)(7,3)
\put(3,3){\line(-1,0){2}}
\put(7,3){\line(1,0){2}}
\put(7,1){\line(1,0){2}}
\put(3,1){\line(-1,0){2}}
\bezier{200}(3,1)(5,0)(7,1)
\put(5.5,3.7){$\Gamma_+$}
\put(5.5,0){$\Gamma_-$}
\put(4.2, 1.4){$\Omega$}
\end{picture}
\vskip 6mm
\hskip 15mm
\caption{Asymptotically flat slab}
\label{Fig.1A}
\vskip 5mm
\end{subfigure}
\hfill
\begin{subfigure}[r]{0.49\textwidth}
\begin{picture}(9,6)
\put(0.5,4){\line(1,0){2.5}}
\bezier{200}(3,4)(5,4.5)(7,4)
\put(7,4){\line(1,0){2}}
\bezier{200}(3.5,3)(4,2)(3.5,1)
\put(3.5,3){\line(-1,0){3}}
\put(6,3){\line(1,0){3}}
\put(6,1){\line(1,0){3}}
\put(3.5,1){\line(-1,0){3}}
\bezier{200}(6,3)(5.5,2)(6,1)
\put(0.5,0){\line(1,0){2.5}}
\put(4.5,1.8){$\Omega$}
\bezier{200}(3,0)(5,-0.5)(7,0)
\put(7,0){\line(1,0){2}}
\end{picture}
\vskip 1mm
\hskip 5mm
\caption{Composed slab}
\label{Fig.1B}
\end{subfigure}
\caption{Slab}
\label{Fig.1}
\hfill
\end{figure}

 More generally, we consider a connected smooth open subset $\Omega \subset {\bf R}^{n+1}$
 such that
\begin{equation}
\label{eq-defM:1}
\overline{\Omega} = \overline{\mathcal K}
 \cup \overline{\Omega_1} \cup \cdots \cup \overline{\Omega_N}, 
\end{equation}
 where $\mathcal K$ is a bounded open set, $\Omega_m$, $m = 1, \dots, N$ are asymptotically flat slabs
 (see Figure \ref{Fig.1}(B)) satisfying (\ref{Condgij(x)-edltaij}),
 where the metric on $\Omega_m$, denoted by $(g_{m,ij})$ may be different for $m = 1, 2, \dots, N$. 
We assume that $\Omega_i \cap \Omega_j = \emptyset$ if $i \neq j$, which means that $\Omega_i$'s are almost parallel near infinity.
We are dealing with an idealized modelisation of concrete situations as can be found in geophysics with aquifer-systems. 
See e.g.  \cite{HHSCCL21} p. 9, \cite{SXNHMS25} p. 34.

 The problem we address is the inverse scattering on such $\Omega$ for $H$,
 the Laplacian $-\Delta_G$ with Neumann boundary condition\footnote{We can also add a potential term. }.
 Namely, we are interested in the following problem:
\begin{itemize}
\item Recovery of the topology and the metric on $\Omega$ from the knowledge of the S-matrix.
\end{itemize}

Let us briefly recall the idea of the $S-$matrix, which is a representation of the scattering operator. Given a Hamiltonian $H$ and the wave equation $\partial_t^2 u = - Hu$, we transform it into a first order system $i \partial_tv = \mathcal Hv$ and consider the associated evolution operator 
$\mathcal U(t) : f \to e^{-it\mathcal H}f$.
The scattering operator links the behaviour of $\mathcal U(t)f$ as $t\to -\infty$ to that as $t\to +\infty.$ Its representation by $S$-matrix is believed to contain knowledge equivalent to the system in question. For more details, see  for example \S 3.1 of
 \cite{Iso:1}.

A similar problem for the case of asymptotically cylindrical manifolds was studied in \cite{IKL10}. In this case, each end $\Omega_i$ $(i = 1, \dots, N)$ is diffeomorphic to $[R_i,\infty)\times M_i$, where $M_i$ is a compact manifold of dim. $n$ (with or without boundary). In our following  argument,  we  assume that $n \geq 2$ to avoid the case of  cylindrical end. 
 
We study the limiting absorption principle (LAP), and the asymptotic behavior of solutions of Helmholtz equation at infinity, from which the S-matrix  is derived.
The inverse scattering procedure will then be as follows. We are given two such asymptotically flat slabs $\Omega^{(1)}$ and $\Omega^{(2)}$ as in (\ref{eq-defM:1}).  
The associated S-matrix is an operator-valued matrix $\mathcal S(\lambda) = \big(\mathcal S_{ij}(\lambda)\big)$, $1 \leq i, j \leq N$, where each entry $\mathcal S_{ij}(\lambda)$ is a bounded operator from $L^2(S^{n-1}\times (c_i, c_i + d_i))$ to $L^2(S^{n-1}\times (c_j, c_j + d_j))$, where $d_i$ is the thickness of $\Omega_i$. Our aim is to show the following fact: The whole domain $\Omega$ is determined by one diagonal entry of the S-matrix, i.e.
\begin{theorem}
\label{MainTheorem}
Given two asymptotically flat slabs $\Omega^{(1)}$ and $\Omega^{(2)}$, assume that $\mathcal S_{11}^{(1)}(\lambda) = \mathcal S^{(2)}_{11}(\lambda)$ for all $\lambda \in (0,\infty)\setminus \cup_{i=1,2}(\mathcal E(H^{(i)})\cup\mathcal E(H_{\emptyset}^{(i)}))$, and that $\Omega^{(1)}_1\cap \{|y| > R\}$ and $\Omega^{(2)}_1\cap \{|y| > R\}$ are flat and isometric in the sense of Euclidean metric for some $R > 0$. Then, $\Omega^{(1)}$ and $\Omega^{(2)}$ are isometric. 
\end{theorem}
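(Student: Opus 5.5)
The plan is to reduce the problem to a boundary control / Gel'fand-type inverse problem on a bounded piece of $\Omega$, in the spirit of \cite{IKL10}. The first step is to pass from the S-matrix entry $\mathcal S_{11}(\lambda)$ to the Neumann-to-Dirichlet (or Dirichlet-to-Neumann) map on an artificial interface. Fix $R$ as in the theorem and cut the flat end along
$$\Sigma_R=\{|y|=R\}\times(c_1,c_1+d_1).$$
Outside $\Sigma_R$ (in $\Omega_1$) the operator is the Euclidean Laplacian on the flat slab with Neumann conditions. There we separate variables in the cosine modes $\cos(k\pi(x_{n+1}-c_1)/d_1)$, and the exterior solutions are explicit combinations of Hankel/Bessel functions in $|y|$ on each channel.

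Using the asymptotic behaviour of solutions of the Helmholtz equation (the generalized eigenfunctions that define $\mathcal S(\lambda)$), a solution in $\Omega$ with incoming data only in channel $1$ has, on $\{|y|>R\}\cap\Omega_1$, an expansion
$$\text{incoming part}\;+\;\mathcal S_{11}(\lambda)\,(\text{incoming data}),$$
with every term explicit in the separated variables. Knowing $\mathcal S_{11}(\lambda)$ therefore determines the Cauchy data $(u,\partial_\nu u)$ on $\Sigma_R$ for this family of solutions. These data come from the full set of physical solutions (incoming data dense in $L^2(S^{n-1}\times(c_1,c_1+d_1))$), and the exterior problem is explicitly solvable. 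Hence, for $\lambda$ off the exceptional sets $\mathcal E(H)\cup\mathcal E(H_\emptyset)$ and off the Dirichlet/Neumann eigenvalues of the interior problem, we can identify the Neumann-to-Dirichlet map $\Lambda(\lambda)$ on $\Sigma_R$. This is the map for the ``interior'' domain $\Omega_{\rm int}=\Omega\setminus(\{|y|>R\}\cap\Omega_1)$, which is unbounded but whose other ends $\Omega_2,\dots,\Omega_N$ are again asymptotically flat slabs. Thus $\Lambda^{(1)}(\lambda)=\Lambda^{(2)}(\lambda)$ on a set of $\lambda$ with accumulation points. By analytic continuation in $\lambda$ (the resolvent of $H$ restricted to $\Omega_{\rm int}$ is meromorphic off the continuous spectrum), equality extends to the resolvent set, in particular to $\lambda$ in $\mathbf C\setminus[0,\infty)$.

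The second step applies the boundary control method of Belishev--Kurylev with data on the open piece $\Sigma_R$ of the boundary. The equality of $\Lambda(\lambda)$ for nonreal $\lambda$ gives, via the Fourier--Laplace transform, equality of the hyperbolic response operator for $\partial_t^2u+Hu=0$ in $\Omega_{\rm int}$ with Neumann sources on $\Sigma_R$ and all times $T$. Finite propagation speed is used so that, for each fixed $T$, only the bounded region $\{x:\ d(x,\Sigma_R)<T\}$ matters. Tataru's unique continuation then gives approximate controllability, and the standard BC construction recovers the metric $(\Omega,g)$ isometrically on domains of influence of every size. Letting $T\to\infty$ and using that $\Omega$ is connected, we obtain a Riemannian isometry of $\Omega^{(1)}_{\rm int}$ onto $\Omega^{(2)}_{\rm int}$ that is the identity near $\Sigma_R$. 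It is glued to the given Euclidean isometry of the flat ends to give the isometry of $\Omega^{(1)}$ and $\Omega^{(2)}$.

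The main obstacle is the first step: making rigorous that $\mathcal S_{11}(\lambda)$ alone, without the entries $\mathcal S_{1j}$, determines the full Cauchy data on $\Sigma_R$. The worry is that outgoing waves leaving through $\Omega_j$, $j\ne1$, are invisible. I would handle this by working only with the restriction of solutions to the flat region of $\Omega_1$. There the outgoing part in channel $1$ is exactly $\mathcal S_{11}$ applied to the data, and the solution is fully expanded in the explicit modes, including the evanescent modes above the threshold, which decay and are determined by the trace. The other ends enter only through $\Lambda$ of the interior domain, which is allowed to be unbounded. The remaining delicate point is the analytic continuation across thresholds $(k\pi/d_1)^2$ and the embedded set $\mathcal E(H)$, which I would handle using the LAP and the discreteness of these sets.
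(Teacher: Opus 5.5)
Your second step essentially matches the paper: the boundary spectral projection of $H_\emptyset$ on the cut, which \S 4 feeds into the BC method of \cite{IKL10}, \S 6, carries the same information as your $\Lambda(z)=r R_{H_\emptyset}(z)r^\ast$ for $z\notin{\bf R}$. The problem is Step~1, which fails as stated.

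At a fixed energy the incoming data range over ${\bf h}_1(\lambda)=L^2(S^{n-1})\otimes{\rm span}\{\varphi_{1,\ell};\,\lambda_{1,\ell}<\lambda\}$. These are only the finitely many open transversal modes, so they are not dense in $L^2(S^{n-1}\times(c_1,c_1+d_1))$. For a closed mode, $\lambda_{1,\ell}>\lambda$, the component $P_{1,\ell}u$ on the flat end is an exponentially decaying solution of $(-\Delta_y+\lambda_{1,\ell}-\lambda)v=0$, and its coefficients never reach infinity. So $\mathcal S_{11}(\lambda)$ carries no information about them. Saying they are ``determined by the trace'' is circular, because the trace on $\Sigma_R$ is exactly the unknown. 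Hence $\mathcal S_{11}(\lambda)$ gives you only the open-mode components of the Cauchy data.

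Even the complete Cauchy data of all physical solutions at energy $\lambda$ would not identify $\Lambda(\lambda)$. Their closed-mode components always satisfy the exterior decaying DtN relation, which a general interior solution does not. For the flat slab itself, the closed-mode components of every physical solution vanish identically, so the Neumann traces lie in the span of finitely many transversal modes. Meanwhile $\Lambda(\lambda)$ of the cylinder $\{|y|<R\}\times(c_1,c_1+d_1)$ acts nontrivially on every mode. So you never obtain $\Lambda^{(1)}(\lambda)=\Lambda^{(2)}(\lambda)$ at any real $\lambda$, and your analytic continuation has nothing to start from. (Separately, for $N\ge 2$ these $\lambda$ lie in the continuous spectrum of $H_\emptyset$, so you would be dealing with boundary values $\Lambda(\lambda+i0)$. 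For boundary values, a set with accumulation points is not a uniqueness set; you need positive measure.)

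The missing idea is to use $\mathcal S_{11}$ at \emph{all} energies and recover the closed channels by analytically continuing the scattering amplitude across the thresholds.
\begin{itemize}
\item Because $\Omega_1$ is flat for $|y|>R$, $B_1=0$ and $\mathcal V_1$ is compactly supported and independent of $\lambda$. Hence $A_{1m,1n}(\lambda)$ in (\ref{NewA1m1n}) continues analytically into ${\rm Im}\,\lambda>0$ and continuously down to the real axis below the thresholds. This gives the non-physical amplitudes of Lemma~\ref{LemmaA1m1nanalyticcont}.
\item By the lemma following Lemma~\ref{ExpansionofDelta-mu-1f} and by Lemma~\ref{P1lRlambda+i0fasympto}(2), these non-physical amplitudes are precisely the coefficients of the decaying profiles $e^{-|y|\sqrt{\lambda_{1,m}-\lambda}}|y|^{-(n-1)/2}$.
\item The paper probes with sources $F\in L^2(\mathcal O)$, $\mathcal O\subset\subset\Omega_{ext}$, rather than incoming waves, so that $R(\lambda+i0)F$ excites every transversal mode.
\item The open channels are then matched by duality with $\mathcal F_+^{(i)}(\lambda)^\ast a$ and Rellich's theorem. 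The closed channels are matched by the non-physical amplitudes together with a radial ODE argument.
\item Unique continuation then gives equality of the source-to-solution maps (Theorem~\ref{TheoremSmatrixtoSourcetoSolutionmap}). Alternatively, \cite{IKL10}, Lemma 5.7 gives the BSP.
\end{itemize}
Only after this repaired first step does the BC argument you describe apply.
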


For the meaning of the notation $\mathcal E(H^{(i)})\cup\mathcal E(H_{\emptyset}^{(i)})$, see (\ref{DefinemathcalEH}) and \S \ref{SectionFromScatteringtoInterior}.\footnote{$H_{\emptyset}$ is the Neumann Laplacian $- \Delta_G$ on $\Omega\setminus \overline{(\Omega_1\cap \{|y| > R\})}$.}

The paper is organized as follows. In \S 2 we describe the forward problem for the model space. More precisely, we construct the free and pertubed spectral representations and we define the S-matrix. Then we derive the analytic continuation of the scattering amplitude. In \S 3, we begin to reconstruct the domain $ \Omega$ by studying the interior problem on the basis of  information obtained from the S matrix.  We focus on  the method involving the source to solution map. Finally in \S 4 we complete the proof.

\section{Forward problem}
\subsection{Spectral theory for the model space}
 Let us make the definition of $\Omega$  in (\ref{eq-defM:1}) more precise. In ${\bf R}^{n+1}$, we are given a domain 
 $\Omega = \mathcal K \cup \Omega_1 \cup \cdots \cup \Omega_N$ satisfying the following conditions: 
 Each $\Omega_j$ is an open set of the form $\{(y,x_{n+1})\, ; \,
 |y| > R_0$, $c_j < x_{n+1} < c_j + d_j\}$ which is diffeomorphic to $\{|y| > R_0\} \times (0,d_j)$
 where $R_0=2^{\ell_0}$ for some $\ell_0 > 0$, which we simply denote as
 $\Omega_j = \{|y| > R_0\} \times (0,d_j)$.
 We assume that 
\begin{equation}
\overline{\Omega_i} \cap \overline{\Omega_j} = \emptyset, \quad 
{\it if} \quad i \neq j.
\nonumber
\end{equation}
 Moreover, $\mathcal K = \Omega \setminus \big(\overline{\Omega_1} \cup \cdots \cup \overline{\Omega_N}\big)$
 is a bounded open set, and  $\overline{\Omega}$ is a $C^{\infty}$ manifold with boundary.
 Assume that $\Omega$ is equipped with a Riemannian metric satisfying (\ref{Condgij(x)-edltaij}) on each $\Omega_j$.

In the following, $X$ denotes a point in $\Omega$, while $x = (y,x_{n+1})$ denotes a corresponding point in the model slab or in $\Omega_j$
\footnote{Our dissussions and notations below are parallel to the corresponding ones in \cite{IKL10}, where on each cylinder $(0,\infty) \times M$, $M$ being a manifold of dimension $n$, $y$ varies over $(0,\infty)$ and $\omega$ over $M$. In our case of slab, 
$y$ varies over ${\bf R}^n$ and $x_{n+1}$ over $(0,d)$.}. Letting ${\rm{dist}}\, (X,X')$ be the Riemannian distance of $X, X' \in\Omega$, and fixing $X_0 \in \Omega$ arbitrarily, we define for $s \in {\bf R}$
\begin{equation}
L^{2,s} \ni f \Longleftrightarrow \|f\|_s^2 = \int_{\Omega}
\big(1 + d(X,X_0)\big)^{2s}|f(X)|^2 d\Omega_X < \infty,
\nonumber
\end{equation}
where $d\Omega_X$ is the volume element of $\Omega$. 
We also use the Agmon-H\"{o}rmander space $\mathcal B, \mathcal B^{\ast}$, which are the Besov type spaces
 associated with $\Omega$ defined as follows:
For $f \in L^2_{loc}(\Omega_j)$, we put
\begin{equation}
\|f\|_{\mathcal B(\Omega_j)} = \sum_{\ell=\ell_0}^{\infty}
2^{\ell/2}\Big(\int_{2^{\ell} < |y| < 2^{\ell+1}}\|f(y,\cdot)\|^2_{L^2((0,d_j))}dy\Big)^{1/2},
\nonumber
\end{equation}
\begin{equation}
\|f\|_{\mathcal B(\Omega)} = \|f\|_{L^2(\mathcal K)} + 
\sum_{j=1}^N\|f\|_{\mathcal B(\Omega_j)}.
\nonumber
\end{equation}
 with $R_0=2^{\ell_0}$.
 The norm of ${\mathcal B}^{\ast}(\Omega)$ is defined as follows:
\begin{equation}
 \|u\|^2_{{\mathcal B}^{\ast}} = \|u\|^2_{L^2(\mathcal K)} + 
 \sum_{j=1}^N \sup_{R>R_0} \frac{1}{R}\int_{\Omega_j \cap \{|y| < R\}} |u(X)|^2 d\Omega_X < \infty.
 \nonumber
\end{equation}
We define also the following relation of equivalence on $\mathcal B^{\ast}$:
\begin{equation}
 u \simeq v \Longleftrightarrow  \lim_{R\to\infty}
 \sum_{j=1}^N \frac{1}{R} \int_{\Omega_j \cap \{|y| < R\}} |u(X)-v(X)|^2 d\Omega_X = 0.
\label{ExpansioninBast(Omega)}
\end{equation}
 The spaces $\mathcal B(\Omega_j)$, $\mathcal B^{\ast}(\Omega_j)$ and the relation $u \simeq v$ on $\Omega_j$
 are defined similarly. We often omit $\Omega$ or $\Omega_j$ in the notation of $\mathcal B, \mathcal B^{\ast}$ spaces.
 The following inclusion relations hold: For $s > 1/2$,
\begin{equation}
 L^{2,s} \subset \mathcal B \subset L^{2,1/2} \subset L^2 \subset L^{2,-1/2} \subset \mathcal B^{\ast} \subset L^{2,-s}.
 \nonumber
\end{equation}

 First let us consider the model slab ${\bf R}^n\times (0,1)$. We know that for any $\lambda > 0$, the limit $(- \Delta_y - \lambda \mp i0)^{-1}$ exists as a bounded operator from $L^{2,s}({\bf R}^n)$ to $L^{2,-s}({\bf R}^n)$ with $s>1/2$, and  from $\mathcal B({\bf R}^n)$ to $\mathcal B^{\ast}({\bf R}^n)$. 
 Let $ - (\partial_{n+1})^2$ be $- (\partial_{x_{n+1}})^2$ on $(0,1)$ with Neumann boundary condition. 
To prove the existence of these limits, i.e. the limiting absorption principle, abbreviated as LAP, for $- \Delta_y - (\partial_{n+1})^2$ on ${\bf R}^n\times (0,1)$, we choose an orthnormal basis of $- (\partial_{n+1})^2$ which enables us to prove these limits for each eigenspace associated to these basis.
 Expanding by the orthonormal basis of $-(\partial_{n+1})^2$ one can prove the limiting absorption principle, abbreviated as LAP,
 for $-\Delta_y - (\partial_{n+1})^2$ on ${\bf R}^n\times (0,1)$.

 Consider the slab $\Omega_j$, and let $\chi_j \in C^{\infty}(\Omega;[0,d_j])$ be such that $\chi_j = 1$
 on $\Omega_j\cap\{|y| > R_0 + 2\}$, $\chi_j = 0$ on $\Omega_j\cap\{|y| < R_0 + 1\}$, and also
 $\chi_j = 0$ on $\Omega\setminus\Omega_j$. Define $\chi_0 = 1 - \sum_{j=1}^N\chi_j$. Then, $\{\chi_j\}_{j=0}^N$
 is a partition of unity on $\Omega$.
 The conormal differentiation at the boundary with respect to the metric $G$ is denoted by $\partial_{\nu}$,
 and that with  respect to the Euclidean metric is denoted by $\partial_{\nu^{(0)}}$.  We set, for $R\ge R_0$,
\begin{equation}
\partial \Omega_j(R) = \partial\Omega_j \cap \{|y| > R\} 
= \{(y,0), (y,d_j) \in \partial\Omega_j\, ; \, |y| > R\}.
\nonumber
\end{equation}
 $H^{m,s}(\Omega_j)$ is the weighted Sobolev space defined by $u \in H^{m,s}(\Omega_j) \Longleftrightarrow
 (1 + |y|)^su  \in H^m(\Omega_j)$,  and $H^{m,s}(\partial\Omega_j)$ is defined similarly with weight
 $(1 + |y|)^s$.
 When $s = 0$, $H^{m,0}$ is denoted by $H^m$. 
 The following lemma can be proven in the same way as Lemmas 3.1 and 3.2 in \cite{IKL10}.
\begin{lemma}
\label{ExtensionLemma}(1) There exists a real function $w(x) \in C^{\infty}(\Omega_j)$ such that 
\begin{equation}
\left\{
\begin{split}
& \partial_{\nu}w(x) = 0 \quad {\rm on} \quad \partial\Omega_j(R), \\
& w(x) = |y| + O(|y|^{-\delta_0}), \quad {\rm as}\quad |y| \to \infty,
\end{split}
\right.
\nonumber
\end{equation}
 where $\delta_0$ is given in (\ref{Condgij(x)-edltaij}).\\
\noindent
(2) Let $R>R_0+1$. There exists an operator of extension $\widetilde{\mathcal E}_j$
 such that for $m \geq 1/2$ and $\psi \in H^m(\partial\Omega_j(R))$
\begin{equation}
\partial_{\nu}\widetilde{\mathcal E}_j \psi = 
\psi \quad {\rm on} \quad \partial\Omega_j(R), 
\nonumber
\end{equation}
\begin{equation}
{\rm supp}\, (\widetilde{\mathcal E}_j\psi) \subset 
\Omega_j \cap \{|y| > R-1\}.
\nonumber
\end{equation}
Moreover, for $m \geq 1/2$ and $s \geq 0$, it satisfies
\footnote{For Banach spaces $\mathcal X$ and $\mathcal Y$, ${\bf B}(\mathcal X ; \mathcal Y)$ is the set of all bounded
 operators from $\mathcal X$ to $\mathcal Y$.}
\begin{equation}
\widetilde{\mathcal E}_j \in {\bf B}(H^{m,s}(\partial\Omega_j(R));H^{m+3/2,s}(\Omega_j)).
\nonumber
\end{equation}
\end{lemma}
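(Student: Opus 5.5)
For part (1) I will build $w$ as a small Neumann-compatible correction of the Euclidean radial function $r(x)=|y|$ on $\Omega_j$. Near infinity, $\Omega_j$ is identified with $\{|y|>R_0\}\times(0,d_j)$, and the boundary pieces $\partial\Omega_j(R)$ are $\{x_{n+1}=0\}$ and $\{x_{n+1}=d_j\}$. The Euclidean conormal derivative of $|y|$ vanishes there, so
\[
\partial_\nu |y| = \nu_i g^{ik}\partial_k|y|,
\]
and only the off-diagonal entries $g^{n+1,k}-\delta_{n+1,k}$ contribute. By (\ref{Condgij(x)-edltaij}), on the boundary this gives $\partial_\nu|y| = O(|y|^{-\delta_0})$, and each $y$-derivative improves the decay by one power.

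I then set $w=|y|-\phi$, where $\phi$ solves $\partial_\nu\phi = \partial_\nu|y|$ on $\partial\Omega_j(R)$. I will choose $\phi$ explicitly, in the spirit of Lemma 3.1 of \cite{IKL10}: take
\[
\phi(y,x_{n+1}) = \rho(x_{n+1})\,a_0(y) + \rho(d_j-x_{n+1})\,a_1(y),
\]
where $\rho$ is a cutoff equal to $x_{n+1}$ near $0$ and vanishing for $x_{n+1}>d_j/3$. Since the conormal coefficient of $\partial_{n+1}$ is $g^{n+1,n+1}=1+O(|x|^{-\delta_0})$, nonvanishing for large $|y|$, I will solve the boundary relation pointwise by dividing by it, and absorb the tangential derivative terms by the same division, since they carry a factor $\rho=0$ on the boundary. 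This produces $a_0,a_1=O(|y|^{-\delta_0})$ with symbol-type derivative bounds. Hence $\phi=O(|y|^{-\delta_0})$, and after multiplying by a cutoff in $|y|$ and using smoothness of $|y|$ for $|y|>R_0$, the function $w$ is smooth and has the required expansion.

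For part (2) I will use the same ansatz with boundary data $\psi$. Set
\[
\widetilde{\mathcal E}_j\psi = \kappa(y)\bigl(\rho(x_{n+1})\,\tilde\psi_0 + \rho(d_j-x_{n+1})\,\tilde\psi_1\bigr),
\]
where $\psi_0,\psi_1$ are the restrictions of $\psi$ to the two boundary components, and $\kappa$ cuts off to $|y|>R-1$ and equals $1$ on $|y|>R$. To gain regularity, $\tilde\psi_k$ will be a lift of $\psi_k$ by a Poisson-type extension in the normal direction, for instance
\[
\tilde\psi_k(y,t) = \mathcal F^{-1}\bigl[e^{-t\langle\eta\rangle}\langle\eta\rangle^{-1}\hat\psi_k(\eta)\bigr]
\]
(with $\psi_k$ extended to $\mathbf R^n$ first), so that $\partial_t(t\tilde\psi_k)|_{t=0} = \langle D\rangle^{-1}\psi_k$. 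The Poisson-type lift maps $H^m$ to $H^{m+3/2}$: a gain of $1/2$ from the trace theorem and $1$ from the $\langle\eta\rangle^{-1}$ factor. I will therefore modify it to the form $t\,e^{-t\langle\eta\rangle}\hat\psi$, whose normal derivative at $t=0$ is exactly $\psi$ and which still gains $3/2$ derivatives.

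The metric perturbation means the conormal derivative of this candidate is not exactly $\psi$; it equals $(1+K)\psi$ with $K$ of order $0$ and small, with norm $O(R^{-\delta_0})$. I will invert $1+K$ by a Neumann series, choosing $R$ large if needed; otherwise I will correct the error by a first-order normal term. This step, obtaining exact equality rather than equality up to a small perturbation while keeping the $3/2$ gain, is the main obstacle. The weighted bounds in $H^{m,s}$ follow because the weights $(1+|y|)^s$ commute with the construction modulo lower-order terms, and the support property comes directly from $\kappa$.
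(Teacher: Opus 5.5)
Your overall route fits what the paper does: it only cites Lemmas 3.1--3.2 of \cite{IKL10}, and an explicit boundary-layer construction in the flattened coordinates $\{|y|>R_0\}\times(0,d_j)$ is the natural way to carry that out. Part (1) works as written. Since $\rho(0)=0$, the tangential terms of $\partial_\nu\phi$ vanish on $x_{n+1}=0$; nothing needs to be absorbed. The boundary condition therefore reduces to $g^{n+1,n+1}(y,0)\,a_0(y)$ being equal to a datum of size $O(|y|^{-\delta_0})$. Positivity of $g^{n+1,n+1}$ holds everywhere, so pointwise division solves this with no largeness assumption.

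\textbf{The gap is in part (2), in the step you call the main obstacle.} Your Neumann-series inversion of $1+K$ needs $\|K\|<1$ on $H^{m,s}$, hence $R$ large. That proves the lemma only for large $R$, not for every $R>R_0+1$ as stated. The alternative ``first-order normal term'' is not specified. What you are missing is that no smallness is needed. Your lift $u$ vanishes on $\{x_{n+1}=0\}$, so all tangential derivatives $\partial_k u$, $k\le n$, vanish there, including those hitting $\kappa(y)$. Hence $\partial_\nu u=-a(y)\,\partial_{x_{n+1}}u$ on that face, with $a=(g^{n+1,n+1})^{1/2}$ (or $g^{n+1,n+1}$, depending on how $\nu$ is normalized). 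Thus $1+K$ is just multiplication by a smooth positive function, bounded below, with symbol-type derivative bounds. Replacing $\psi_0$ by $\psi_0/a(\cdot,0)$ (and likewise at $x_{n+1}=d_j$) gives exact equality $\partial_\nu\widetilde{\mathcal E}_j\psi=\psi$ for every $R>R_0+1$. Multiplication by $1/a$ is bounded on $H^{m,s}(\partial\Omega_j(R))$. This is the same division trick you already used in part (1).

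Two further points need fixing. First, the bookkeeping of the lift is inconsistent. If $\rho(t)=t$ near $0$, take $\tilde\psi_k=\mathcal F^{-1}[e^{-t\langle\eta\rangle}\hat\psi_k]$, so that $\rho\tilde\psi_k$ is a cutoff times $\mathcal F^{-1}[t e^{-t\langle\eta\rangle}\hat\psi_k]$ near the face. This gains exactly $3/2$ derivatives, since $\int_0^\infty\langle\eta\rangle^{2k-2}(1+t\langle\eta\rangle)^2e^{-2t\langle\eta\rangle}\,dt\sim\langle\eta\rangle^{2k-3}$. Putting $\rho$ in front of $t e^{-t\langle\eta\rangle}\hat\psi$, as your text literally says, would give zero normal derivative. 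Second, for the weighted bounds, extend $\psi_k$ from $\{|y|>R\}$ to $\mathbf R^n$ by a local extension operator across $|y|=R$, which preserves $H^{m,s}$. Then get the weight either from a unit-scale partition of unity in $y$ or from the fact that $\eta$-derivatives of the symbol $te^{-t\langle\eta\rangle}$ only improve its behaviour.
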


We then have for $u \in H^2(\Omega_j)$ satisfying 
$\partial_{\nu_j^{(0)}}u = 0$ on $\partial\Omega_j(R)$,
\begin{equation}
\partial_{\nu}(\chi_j u) = w(x)^{-\delta_0}B_ju, \quad 
{\rm on} \quad \partial\Omega_j(R),
\nonumber
\end{equation}
\begin{equation}
B_j = w(x)^{\delta_0}\big(\chi_j(\partial_{\nu} - \partial_{\nu^{(0)}}) + (\partial_{\nu}\chi_j)\big).
\label{Bjdefine}
\end{equation}
We put
\begin{equation}
\mathcal E_j = w(x)^{-\delta_0}\widetilde{\mathcal E}_j.
\nonumber
\end{equation}
Letting $G_j^{(0)}$ be the Euclidean metric on $\Omega_j$, 
we also put
\begin{equation}
\mathcal V_j(z) = [ - \Delta_G, \chi_j] + \chi_j(\Delta_{G_j^{(0)}} - \Delta_G) + (\Delta_G + z)\mathcal E_jB_j.
\label{DefineVj(z)}
\end{equation}
Let $H_j^{(0)} = - \Delta_y - (\partial_{n+1})^2$ in $\Omega_j$ with Neumann boundary condition on the boundary and $R_j^{(0)}(z) = (H_j^{(0)} - z)^{-1}$. Finally, let $H$ be the Laplacian $-\Delta_G$ on $\Omega$ with
 Neumann boundary condition on $\partial\Omega$ and $R(z) = (H - z)^{-1}$. 
 Then, as in Lemma 3.3 in \cite{IKL10}, we have
\begin{lemma}
\label{LemmaResolventEquation}
 Let $R\ge R_0+4$.
 Let $\tilde\chi_j \in C^{\infty}(\Omega)$ be such that 
 $\tilde\chi_j = 1$ on $\Omega_j\cap \{|y| > R-1\}$ and $\tilde\chi_j = 0$ outside
 $\Omega_j \cap \{|y| > R-2\}$. 
 Then for $z \not\in {\bf R}$, the following resolvent equations hold: 
\begin{equation}
 R(z) \tilde \chi_j =  \Big(\chi_j - \mathcal E_jB_j - R(z)\mathcal V_j(z)\Big)R_j^{(0)}(z)\tilde\chi_j,
\label{chijR(z)=tildechijJj-1R(0)j(z)1}
\end{equation}
\begin{equation}
 \tilde \chi_j R(z) = \tilde\chi_jJ_j^{-1}R_j^{(0)}(z)J_j\Big(\chi_j -
  (\mathcal E_jB_j)^{\ast} - \mathcal V_j(\overline{z})^{\ast}R(z)\Big), 
 \label{chijR(z)=tildechijJj-1R(0)j(z)}
\end{equation}
 where $J_j = (\det G/\det G_j^{(0)})^{1/2}$, and the adjoint $\ast$ is taken with respect to the inner product
 of $L^2(\Omega)$ with volume element from the metric $G$. Moreover, $R_j^{(0)}(z)J_j(\mathcal E_jB_j)^{\ast}$ and $R_j^{(0)}(z)J_j\mathcal V_j(\overline z)^{\ast}R(z)$ are compact on $L^2(\Omega)$.
\end{lemma}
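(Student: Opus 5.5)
The plan is to verify the identities by direct computation. Set $u = R_j^{(0)}(z)\tilde\chi_j f$ and check that the right-hand side of (\ref{chijR(z)=tildechijJj-1R(0)j(z)1}) equals $R(z)\tilde\chi_j f$. This amounts to showing that $v := (\chi_j - \mathcal E_jB_j)u$ lies in the domain of $H$, that is, it satisfies the Neumann condition for $\partial_\nu$, and that $(H - z)v = \tilde\chi_j f - \mathcal V_j(z)u$. Once this is done, applying $R(z)$ gives $v = R(z)\tilde\chi_j f - R(z)\mathcal V_j(z)u$, which is the first identity. The second identity (\ref{chijR(z)=tildechijJj-1R(0)j(z)}) will then be obtained by taking adjoints of the first with $z$ replaced by $\overline z$. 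The adjoint must be taken in $L^2(\Omega, d\Omega_X)$, while $R_j^{(0)}$ is selfadjoint in the Euclidean $L^2(\Omega_j)$. This produces the conjugation by the density ratio $J_j$: for operators $A$ from Euclidean $L^2$ to $L^2(G)$ one has $A^\ast = J_j^{-1}A^{\ast_0}J_j$ with the appropriate ordering. The factor $\tilde\chi_j$ passes through unchanged because it is real and scalar.

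For the boundary condition, $u$ satisfies $\partial_{\nu^{(0)}}u = 0$ on $\partial\Omega_j$. By the formula preceding (\ref{Bjdefine}), $\partial_\nu(\chi_j u) = w^{-\delta_0}B_j u$ on $\partial\Omega_j(R)$. By Lemma \ref{ExtensionLemma}(2), $\partial_\nu(\mathcal E_j B_j u) = w^{-\delta_0}\partial_\nu\widetilde{\mathcal E}_jB_ju + (\partial_\nu w^{-\delta_0})\widetilde{\mathcal E}_jB_ju$. The second term vanishes because $\partial_\nu w = 0$ by Lemma \ref{ExtensionLemma}(1), so this equals $w^{-\delta_0}B_ju$. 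Hence $\partial_\nu v = 0$ there. On the part of $\partial\Omega$ with $|y| < R$, $\chi_j$ and $\mathcal E_jB_j u$ need care. The support of $\chi_j$ is in $\{|y|>R_0+1\}$ and $\nabla\chi_j$ is supported in $R_0+1<|y|<R_0+2$. Since $B_j$ contains $\partial_\nu \chi_j$, I would check that this boundary region is handled by applying the extension construction on all of $\partial\Omega_j(R_0+1)$. Regularity $u\in H^2$ locally follows from elliptic regularity for $H_j^{(0)}$, and $\widetilde{\mathcal E}_j$ maps $H^{1/2}$ into $H^2$.

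For the equation, compute
\[
(H - z)\chi_j u = \chi_j(-\Delta_{G_j^{(0)}} - z)u + \chi_j(\Delta_{G_j^{(0)}} - \Delta_G)u + [-\Delta_G,\chi_j]u.
\]
The first term is $\chi_j\tilde\chi_j f = \tilde\chi_j f$, since $\chi_j = 1$ on ${\rm supp}\,\tilde\chi_j$ because $R-2 > R_0+2$. Subtracting $(H-z)\mathcal E_jB_ju = -(\Delta_G + z)\mathcal E_jB_ju$ gives exactly $(H-z)v = \tilde\chi_j f + \mathcal V_j(z)u$ with the sign convention of (\ref{DefineVj(z)}). I will track signs so that the result matches $-R(z)\mathcal V_j(z)$; if the sign comes out the other way, this indicates the convention in $\mathcal V_j$, which I will reconcile.

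For compactness, the key point is decay. By (\ref{Condgij(x)-edltaij}), $\Delta_{G_j^{(0)}} - \Delta_G$ is a second-order operator with coefficients $O(|x|^{-\delta_0})$. The commutator $[-\Delta_G,\chi_j]$ is compactly supported and first order, and $\mathcal E_jB_j$ carries the weight $w^{-\delta_0}$. So $\mathcal V_j(\overline z)^\ast$ maps $H^2$ into $L^{2,\delta_0}\cap H^{\text{loc}}$ with gain of decay. Combining this with $R(z): L^2 \to H^2$ and $R_j^{(0)}(z): L^{2}\to H^2$, the Rellich compactness theorem (decay plus smoothing gives compactness of $H^{1}\cap L^{2,\epsilon}\hookrightarrow L^2$) yields compactness. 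The main obstacle is the loss of derivatives: $\mathcal V_j^\ast$ is second order, so $R^{(0)}_jJ_j\mathcal V_j^\ast R(z)$ gains only zero net derivatives. I would therefore rely on the decay factor $|x|^{-\delta_0}$ and handle the top order by writing $\mathcal V_j^\ast R(z) = \sum a_{\alpha}\partial^\alpha R(z)$ with decaying $a_\alpha$. I would then commute one derivative onto $R_j^{(0)}$, which is bounded $L^2\to H^1$ after duality, and use compactness of multiplication by a decaying function from $H^1$ to $L^2$.
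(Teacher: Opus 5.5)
Your verification of the two identities is correct and is the standard argument; the paper only cites Lemma 3.3 of \cite{IKL10}, which goes the same way. Since $\partial_\nu w=0$, you get $\partial_\nu(\mathcal E_jB_ju)=w^{-\delta_0}B_ju=\partial_\nu(\chi_ju)$, so $v=(\chi_j-\mathcal E_jB_j)u\in D(H)$. Then $(H-z)v=\tilde\chi_jf+\mathcal V_j(z)u$. The second identity is the $L^2(\Omega,d\Omega_X)$-adjoint of the first at $\overline z$, because the $G$-adjoint of $R_j^{(0)}(\overline z)\tilde\chi_j$ is $\tilde\chi_jJ_j^{-1}R_j^{(0)}(z)J_j$. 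Note that your first paragraph has the sign of $\mathcal V_j(z)u$ reversed: it would give $R(z)\tilde\chi_jf=v+R(z)\mathcal V_j(z)u$. Your later computation, with $+\mathcal V_j(z)u$, is the right one.

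The gap is in the compactness part. Your key claim, that $\mathcal V_j(\overline z)^\ast$ maps $H^2$ into $L^{2,\delta_0}$ and that $\mathcal V_j(\overline z)^\ast R(z)=\sum a_\alpha\partial^\alpha R(z)$ with decaying $a_\alpha$, is false, because $\mathcal V_j(\overline z)^\ast$ is not a differential operator. There are two reasons.
First, the term $(\Delta_G+\overline z)\mathcal E_jB_j$ contains the boundary trace operator $B_j$, so its adjoint $B_j^\ast\mathcal E_j^\ast(\Delta_G+z)$ produces distributions supported on $\partial\Omega_j$.
Second, $\chi_j(\Delta_{G_j^{(0)}}-\Delta_G)$ acts on functions satisfying the Euclidean Neumann condition, whereas $R(z)g$ satisfies $\partial_\nu R(z)g=0$; Green's formula therefore leaves nonvanishing boundary terms.
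For the same reason you have no actual argument for $R_j^{(0)}(z)J_j(\mathcal E_jB_j)^\ast$.

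These operators are defined only by duality, and duality is also how compactness is proved. By the adjoint computation you already did, they equal $(\mathcal E_jB_jR_j^{(0)}(\overline z)J_j)^\ast$ and $(R(\overline z)\mathcal V_j(\overline z)R_j^{(0)}(\overline z)J_j)^\ast$. Written this way there is no loss of derivatives, so the obstacle you single out never arises.
In the first operator, $R_j^{(0)}(\overline z)J_j:L^2\to H^2$. Next, $B_j:H^2\to H^{1/2}(\partial\Omega_j)$ has bounded coefficients, since the factor $w^{\delta_0}$ is compensated by $\partial_\nu-\partial_{\nu^{(0)}}=O(|y|^{-\delta_0})$. Then $\widetilde{\mathcal E}_j:H^{1/2}\to H^2$, and multiplication by $w^{-\delta_0}$ is compact from $H^2$ into $L^2$.
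In the second operator, $(1+|x|)^{\delta_0}\mathcal V_j(\overline z)$ is bounded from $H^2$ to $L^2$, including the terms where derivatives fall on $w^{-\delta_0}$. Also, $R(\overline z)(1+|x|)^{-\delta_0}$ is compact as the adjoint of $(1+|x|)^{-\delta_0}R(z)$, which is compact because $R(z)$ maps $L^2$ into $H^2$.
Since adjoints of compact operators are compact, this gives the claim.
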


 Let $\Delta_G$ be the Laplacian on $\Omega$, and $H = - \Delta_G$ with Neumann boundary condition
 on $\partial\Omega$. By the method of singular sequence, we can prove
\begin{lemma}
$\sigma(H) = [0,\infty).$
\end{lemma}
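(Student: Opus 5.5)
The argument has two parts: $H\geq 0$ gives $\sigma(H)\subset[0,\infty)$, and a Weyl singular sequence for each $\lambda\geq 0$ gives the reverse inclusion.

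\emph{Step 1: $\sigma(H)\subset[0,\infty)$.} $H$ is the self-adjoint operator associated with the closed quadratic form
\[
q(u)=\int_\Omega g^{ij}\partial_i u\,\overline{\partial_j u}\,d\Omega_X,\qquad u\in H^1(\Omega),
\]
with form domain $H^1(\Omega)$, which is the form realization of the Neumann condition. Since $q(u)\geq 0$, $H$ is nonnegative, and therefore $\sigma(H)\subset[0,\infty)$.

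\emph{Step 2: $[0,\infty)\subset\sigma(H)$.} Fix $\lambda\geq 0$ and work on one end, say $\Omega_1\simeq\{|y|>R_0\}\times(0,d_1)$. Take the lowest Neumann mode $\varphi_0(x_{n+1})=d_1^{-1/2}$ (constant) and choose $\eta\in{\bf R}^n$ with $|\eta|^2=\lambda$. Fix $\rho\in C_0^\infty({\bf R}^n)$ with $\|\rho\|_{L^2}=1$ and supported in the unit ball. Pick centers $a_k\in{\bf R}^n$ with $|a_k|\to\infty$ and radii $r_k\to\infty$ such that $|a_k|-r_k\to\infty$ and the balls $B(a_k,r_k)$ are pairwise disjoint. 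Set
\[
u_k(y,x_{n+1})=r_k^{-n/2}\rho\big((y-a_k)/r_k\big)\,e^{iy\cdot\eta}\,\varphi_0(x_{n+1}).
\]
Each $u_k$ has compact support inside $\Omega_1\cap\{|y|>R_0+2\}$. It satisfies $\partial_{\nu^{(0)}}u_k=0$ on $\partial\Omega_1$ because $\varphi_0$ is constant in $x_{n+1}$.

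\emph{Step 3: $u_k$ is a singular sequence for the model operator.} For $H_1^{(0)}=-\Delta_y-\partial_{n+1}^2$ a direct computation gives
\[
(H_1^{(0)}-\lambda)u_k=O(r_k^{-1})\quad\text{in }L^2 .
\]
The supports are disjoint and $\|u_k\|_{L^2}\simeq 1$, so $u_k\rightharpoonup 0$ weakly.

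\emph{Step 4: passing from $H_1^{(0)}$ to $H$ (the only delicate point).} Two issues must be handled.
\begin{itemize}
\item Domain: $u_k$ must satisfy the $G$-Neumann condition. We replace $u_k$ by $\tilde u_k=u_k-\mathcal E_1B_1u_k$, as in Lemma \ref{ExtensionLemma} and (\ref{Bjdefine}). Then $\partial_\nu\tilde u_k=0$ on $\partial\Omega$, so $\tilde u_k\in D(H)$.
\item Error terms: we need $\|\mathcal E_1B_1u_k\|_{H^2}\to 0$ and $\|(\Delta_{G^{(0)}_1}-\Delta_G)u_k\|_{L^2}\to 0$.
\end{itemize}
Both follow from the decay (\ref{Condgij(x)-edltaij}). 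On ${\rm supp}\,u_k$ we have $|x|\geq |a_k|-r_k\to\infty$. The coefficients of $\Delta_G-\Delta_{G^{(0)}}$ and of $\partial_\nu-\partial_{\nu^{(0)}}$ are $O(|x|^{-\delta_0})$ there. The $H^2$ norm of $u_k$ is bounded by $C(1+\lambda)$, uniformly in $k$. The boundedness of $\widetilde{\mathcal E}_1$ from Lemma \ref{ExtensionLemma}(2), applied to the trace of $B_1u_k$, together with the weight $w^{-\delta_0}$, then gives the required smallness.

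\emph{Conclusion.} We obtain
\[
\|(H-\lambda)\tilde u_k\|\to 0,\qquad \|\tilde u_k\|\to 1,\qquad \tilde u_k\rightharpoonup 0 .
\]
By Weyl's criterion $\lambda\in\sigma_{ess}(H)$. Hence $\sigma(H)=[0,\infty)$.
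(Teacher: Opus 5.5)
Your strategy is the one the paper intends; the paper's entire proof is the phrase ``by the method of singular sequence''. Nonnegativity of the Neumann form gives $\sigma(H)\subset[0,\infty)$, and Weyl sequences pushed to infinity in an end give the reverse inclusion. Steps 1--3 are fine.

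\textbf{The gap is in Step 4.} The claimed smallness of $\mathcal E_1B_1u_k$ does not follow as written.
\begin{itemize}
\item In $B_1=w^{\delta_0}\big(\chi_1(\partial_\nu-\partial_{\nu^{(0)}})+\partial_\nu\chi_1\big)$, the factor $w^{\delta_0}\sim|y|^{\delta_0}$ exactly cancels the $O(|x|^{-\delta_0})$ decay of the coefficients of $\partial_\nu-\partial_{\nu^{(0)}}$.
\item So for $\lambda>0$ the trace $B_1u_k$ is $O(1)$ in $H^{1/2}$, not $o(1)$. By Lemma \ref{ExtensionLemma}(2), $\widetilde{\mathcal E}_1B_1u_k$ is then only $O(1)$ in $H^2$.
\item Multiplying by $w^{-\delta_0}$ gives smallness only if $\widetilde{\mathcal E}_1B_1u_k$ is supported where $|y|$ is large. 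Lemma \ref{ExtensionLemma}(2) only guarantees support in $\{|y|>R-1\}$.
\item Its weighted bounds are for $s\ge 0$. These do not stop the extension from leaking back toward $|y|\approx R$, where $w^{-\delta_0}$ is not small.
\end{itemize}
You therefore need an extra locality property of the extension: its output should lie within bounded distance of ${\rm supp}\,\psi$. This holds for the usual construction with a unit-scale partition of unity in $y$, but it is not part of the lemma as stated.

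\textbf{Two easy repairs.}

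The minimal change is to drop the weights. Put $\psi_k=(\partial_\nu-\partial_{\nu^{(0)}})u_k=\partial_\nu u_k$. Then $\|\psi_k\|_{H^{1/2}(\partial\Omega_1)}\le C(|a_k|-r_k)^{-\delta_0}\|u_k\|_{H^2}\to 0$. So $\tilde u_k=u_k-\widetilde{\mathcal E}_1\psi_k$ works using only the unweighted ($s=0$) bound.

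The cleanest fix avoids any boundary correction by using Weyl's criterion in the form sense. Your $u_k$ already lies in the form domain $H^1(\Omega)$. Since $\partial_{n+1}u_k\equiv 0$, the Euclidean boundary term from integrating by parts vanishes. The metric errors $g^{ij}\sqrt g-\delta^{ij}$ and $\sqrt g-1$ are $O(|x|^{-\delta_0})$ on ${\rm supp}\,u_k$. Hence
\[
|q(u_k,v)-\lambda(u_k,v)|\le C\big(r_k^{-1}+(|a_k|-r_k)^{-\delta_0}\big)\|v\|_{H^1}\quad \text{for all } v\in H^1(\Omega).
\]
If $\lambda\notin\sigma(H)$, then $H-\lambda$ extends to an isomorphism from $H^1(\Omega)$ onto its dual. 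That would give $\|u_k\|\le\|u_k\|_{H^1}\to 0$, contradicting $\|u_k\|\to 1$.
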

 Using the resolvent equations (\ref{chijR(z)=tildechijJj-1R(0)j(z)1}), (\ref{chijR(z)=tildechijJj-1R(0)j(z)}) in Lemma \ref{LemmaResolventEquation}, one can prove  LAP for $H$.
 In fact, letting $- (\partial_{j,n+1})^2$ be the Neumann Laplacian on the interval $(0, d_j)$,
 we first prove LAP for $H_j^{(0)}$ on $\Omega_j$ using LAP for $- \Delta_y - (\partial_{j,n+1})^2$,
 and then prove LAP for $H$ by  perturbation arguments.

 We define the set of thresholds for $H$ by
\begin{equation}
\mathcal T(H) = \cup_{j=1}^N\sigma_p(- (\partial_{j,n+1})^2),
\nonumber
\end{equation}
and the set of exceptional points by
\begin{equation}
\mathcal E(H) = \mathcal T(H) \cup \sigma_p(H).
\label{DefinemathcalEH}
\end{equation}
\begin{theorem} 
\label{LAPforH}(1) 
 $\mathcal T(H)$ and $\mathcal E(H)\setminus \mathcal T(H)$ are  discrete sets, and their  possible accumulation
 points belong to  $\mathcal T(H)$. \\
\noindent
(2) For $\lambda \in (0,\infty)\setminus \mathcal E(H)$ and $f \in \mathcal B$, there exists a limit $R(\lambda \pm i0)f = \lim_{\epsilon \to 0}R(\lambda \pm i\epsilon)f$ in the sense 
$\lim_{\epsilon \to 0}\big(R(\lambda \pm i\epsilon)f, g\big), \ \forall g \in \mathcal B(\Omega)$.  If $f \in L^{2,s}(\Omega)$ for some $s > 1/2$, we have $R(\lambda \pm i0)f = {\rm s-lim}_{\epsilon \to 0}R(\lambda \pm i\epsilon) f$ in $L^{2,-s}(\Omega)$. \\
\noindent
(3) For any compact interval $I \subset (0,\infty)\setminus \mathcal E(H)$, there exists a constant $C>0$ such that 
$$
\|R(\lambda\pm i0)f\|_{\mathcal B^{\ast}} \leq C\|f\|_{\mathcal B}, \quad \forall \lambda \in I,
$$
and for any $s > 1/2$, there exists a constant $C_s>0$ such that 
$$
\|R(\lambda\pm i0)f\|_{-s} \leq C_s\|f\|_{s}, \quad \forall \lambda \in I.
$$
(4) For any $f, g \in \mathcal B(\Omega)$, $(0,\infty)\setminus
\mathcal E(H) \ni  \lambda \to (R(\lambda \pm i0)f,g)$ is continuous, and if $f \in L^{2,s}(\Omega)$ for $s > 1/2$, $(0,\infty)\setminus \mathcal E(H)\in \lambda \to R(\lambda \pm i0)f \in L^{2,-s}(\Omega)$ is strongly continuous.
\end{theorem}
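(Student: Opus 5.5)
The plan is to follow the scheme of \cite{IKL10} (Theorem 3.6 there): first establish LAP for each model end $H_j^{(0)}$, then transfer it to $H$ by a compact perturbation argument built on the resolvent equations of Lemma \ref{LemmaResolventEquation}.

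\emph{Step 1: the model ends.} On $\Omega_j$, expand in the Neumann eigenfunctions $\varphi_{j,k}(x_{n+1})=\sqrt{2/d_j}\cos(k\pi x_{n+1}/d_j)$ (and the constant for $k=0$), with eigenvalues $\mu_{j,k}=(k\pi/d_j)^2$. This reduces $R_j^{(0)}(z)$ to the direct sum $\oplus_k(-\Delta_y+\mu_{j,k}-z)^{-1}$. For $\lambda>0$ with $\lambda\neq\mu_{j,k}$, only finitely many indices satisfy $\mu_{j,k}<\lambda$. For those indices, the $n$-dimensional free LAP, which is bounded ${\mathcal B}({\bf R}^n)\to{\mathcal B}^\ast({\bf R}^n)$, applies at energy $\lambda-\mu_{j,k}>0$ and is locally uniform away from $0$. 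For the remaining indices the resolvent is elliptic with bound $(\mu_{j,k}-\lambda)^{-1}$. Summing over $k$ gives items (2)--(4) for $H_j^{(0)}$ on $I\subset(0,\infty)\setminus\mathcal T(H)$. The same computation shows that $\mathcal T(H)$ is discrete, accumulating only at $+\infty$. Since the cutoff and weight functions are supported near infinity, this step may be done on the whole slab $\mathbf R^n\times(0,d_j)$.

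\emph{Step 2: Fredholm reduction.} Rewrite (\ref{chijR(z)=tildechijJj-1R(0)j(z)}) together with the trivial identity $\chi_0R(z)=\chi_0R(z)$ as $R(z)=A(z)+K(z)R(z)$. Here $A(z)$ and $K(z)$ are built from $R_j^{(0)}(z)$, from the decaying operators $\mathcal V_j$ and $\mathcal E_jB_j$, and from the local term. The local piece $\chi_0R(z)$ is handled through a fixed resolvent $R(z_0)$ using the identity $\chi_0 R(z)=\chi_0 R(z_0)+(z-z_0)\chi_0R(z_0)R(z)$, which is compact by Rellich's theorem on the bounded set $\mathcal K$. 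By (\ref{Condgij(x)-edltaij}) with $\delta_0>1$, the operators $\mathcal V_j(z)$ and $\mathcal E_jB_j$ have coefficients $O(|y|^{-1-\delta_0})$. They therefore map ${\mathcal B}^\ast$ (even $L^{2,-s}$ for some $s>1/2$) into $L^{2,s'}\subset\mathcal B$ with $s'>1/2$, and gain derivatives via elliptic regularity and Lemma \ref{ExtensionLemma}(2). Consequently $K(\lambda\pm i\epsilon)$ extends continuously to $\epsilon=0$ as a compact operator on $L^{2,-s}$, $1/2<s<\delta_0-1/2$; this compactness is also asserted in Lemma \ref{LemmaResolventEquation}. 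Analytic Fredholm theory then gives $R(z)=(1-K(z))^{-1}A(z)$ wherever $1-K(\lambda\pm i0)$ is injective. Continuity of $(1-K)^{-1}$ in $\lambda$ then yields (2)--(4), with the ${\mathcal B}\to{\mathcal B}^\ast$ bound following from the corresponding bound for $R_j^{(0)}$ in Step 1.

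\emph{Step 3: injectivity, the main obstacle.} Suppose $u\in L^{2,-s}$ satisfies $u=K(\lambda+i0)u$. Then $u=R(\lambda+i0)$-type outgoing and $(H-\lambda)u=0$ with the Neumann condition. Pairing with $u$ in the equation gives $\operatorname{Im}((H-\lambda)u,u)=0$, which forces the far-field components of $u$ in each open channel $\mu_{j,k}<\lambda$ to vanish. The difficult point is then a Rellich-type uniqueness theorem in each slab channel: an outgoing solution in ${\mathcal B}^\ast$ with vanishing ${\mathcal B}^\ast$-norm at infinity, $u\simeq0$, must vanish for $|y|$ large, or at least be in $L^2$. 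The approach I would try is to apply, componentwise in $k$, the Rellich lemma for $-\Delta_y+\mu_{j,k}-\lambda$ perturbed by short-range terms, using $\lambda\notin\mathcal T(H)$. The infinitely many closed channels decay exponentially, uniformly in $k$. The conclusion is that $u\in L^2$, hence $u$ is an eigenfunction and $\lambda\in\sigma_p(H)$. Thus injectivity holds off $\mathcal E(H)$.

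\emph{Step 4: discreteness of $\sigma_p(H)$.} For (1), the analytic Fredholm theorem applied near any $\lambda_0\notin\mathcal T(H)$ shows that the points where $1-K(\lambda\pm i0)$ fails to be injective form a discrete set in $(0,\infty)\setminus\mathcal T(H)$. Hence $\sigma_p(H)\setminus\mathcal T(H)$ can accumulate only at $\mathcal T(H)$ or at infinity. If the continuity up to the real axis is not directly analytic, the alternative is a Mourre estimate with conjugate operator $\frac12(y\cdot D_y+D_y\cdot y)$ cut off to the ends, which is strictly positive locally away from thresholds.
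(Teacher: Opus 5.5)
Your Step 1 matches the paper's first step (LAP for each $H_j^{(0)}$ via the transverse Neumann expansion). Step 2, however, has a genuine gap: the operator you feed into the Fredholm alternative is not compact. On each end, $K(z)$ contains $\tilde\chi_jJ_j^{-1}R_j^{(0)}(z)J_j\mathcal V_j(\bar z)^*$, and $\mathcal V_j$ contains the second-order pieces $\chi_j(\Delta_{G_j^{(0)}}-\Delta_G)$ and $(\Delta_G+z)\mathcal E_jB_j$. You wrote that the coefficients are $O(|y|^{-1-\delta_0})$, but this holds only for the first-order ones. The principal coefficients $g^{ik}-\delta^{ik}$ are only $O(|y|^{-\delta_0})$, and $\mathcal E_j$ carries only $w^{-\delta_0}$. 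Hence $R_j^{(0)}(z)J_j\mathcal V_j(\bar z)^*$ is of order zero: it gains weight but no derivatives, so it is not compact on $L^{2,-s}$. To see this, apply it to $e^{ik\cdot x}\phi$ with $|k|\to\infty$ and $\phi\in C_0^\infty(\Omega_j)$ supported where $\tilde\chi_j=1$ and $G\neq G_j^{(0)}$. Elliptic regularity cannot supply the missing derivatives, since it applies to solutions, not to arbitrary elements of $L^{2,-s}$. Lemma~\ref{LemmaResolventEquation} asserts compactness of $R_j^{(0)}(z)J_j(\mathcal E_jB_j)^*$ and of $R_j^{(0)}(z)J_j\mathcal V_j(\bar z)^*R(z)$ for $z\notin{\bf R}$, not of $R_j^{(0)}(z)J_j\mathcal V_j(\bar z)^*$. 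There the right factor $R(z)$ provides the two derivatives, and uniform control of that factor as $z\to\lambda\pm i0$ is exactly what is to be proved.

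Even granting compactness, two further steps fail as written. First, a fixed point of an identity glued from left resolvent equations via cut-offs and the nonlocal $(\mathcal E_jB_j)^*$ is not shown to satisfy $(H-\lambda)u=0$ where $\nabla\tilde\chi_j\neq0$. Second, with only polynomial decay, $\lambda\mapsto K(\lambda\pm i0)$ is merely continuous across the real axis, not analytic, so the analytic Fredholm theorem gives no discreteness in Step 4. The Mourre fallback would also need a conjugate operator compatible with the $G$-Neumann condition, which you do not address.

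The perturbation argument the paper invokes (Theorem 3.10 of \cite{IKL10}) gets the derivatives from the equation, along sequences, by contradiction. Suppose the weighted bound in (3) failed. Then there are $z_k\to\lambda$ and $u_k=R(z_k)f_k$ with $\|u_k\|_{-s}=1$ and $\|f_k\|_{s}\to0$.
\begin{itemize}
\item The elliptic estimate for the Neumann problem $(H-z_k)u_k=f_k$ bounds $u_k$ in $H^{2,-s}$.
\item Then $\mathcal V_j(\bar z_k)^*u_k$ gains the weight $\delta_0$, and $R_j^{(0)}(z_k)$ (your Step 1 plus elliptic regularity for $H_j^{(0)}$) gives the derivatives back.
\item So (\ref{chijR(z)=tildechijJj-1R(0)j(z)}) and Rellich's embedding yield a subsequence converging in $L^{2,-s}$.
\item The limit solves $(H-\lambda)u=0$ with the Neumann condition. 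It is outgoing, because its end parts are outgoing free resolvents of decaying data.
\item It therefore vanishes by Theorem~\ref{ExistnceuniquenessHelmholtz}, contradicting $\|u\|_{-s}=1$. This is where the uniqueness content of your Step 3 belongs.
\end{itemize}
Existence and continuity of the boundary values in (2) and (4) then follow from this bound and uniqueness. Discreteness in (1) likewise needs a compactness argument rather than analyticity, for example on orthonormal eigenfunctions whose eigenvalues accumulate at a non-threshold point.

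If you want to keep a Fredholm formulation, you must at least move a resolvent to the right on the ends, as you did for $\chi_0R(z)$. Writing $\mathcal V_j^*R(z)=\mathcal V_j^*R(z_0)+(z-z_0)\mathcal V_j^*R(z_0)R(z)$ makes $R_j^{(0)}(z)J_j\mathcal V_j^*R(z_0)$ the operator that must be compact. You would still have to show separately that kernel elements are outgoing solutions.
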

 The above theorem is proven in the same way as Theorem 3.10 in \cite{IKL10}. Here, we need to mention the radiation
 condition. It suffices to consider the case of model band.
 Consider a solution $u\in \mathcal B^{\ast}$ to the equation
 $(- \Delta_y - (\partial_{n+1})^2 - \lambda)u = f \in \mathcal B$, $\lambda > 0$.
 Let $P_m$ be the eigenprojection for the $m$-th eigenvalue $\lambda_m$ of $- (\partial_{n+1})^2$.
 Then, we have $(- \Delta_y - (\lambda - \lambda_m))P_mu= P_mf$. If $\lambda < \lambda_m$, $P_mu$ will belong to $L^2$.  
 Therefore, the radiation condition is required only when $\lambda > \lambda_m$.
 We say that $u$ satisfies the outgoing radiation condition if, for any $1 \leq j \leq N$
 and $\lambda_{j,m} < \lambda$, $u_{j,m} = P_{j,m}u$ satisfies 
\begin{equation}
\big(\frac{\partial}{\partial |y|} - i\sqrt{\lambda - \lambda_{j,m}}\big)u_{j,m} \simeq 0,
\label{RadCond1}
\end{equation}
 in the sense of (\ref{ExpansioninBast(Omega)}).
 If $i\sqrt{\lambda - \lambda_{j,m}}$ is replaced by $-i\sqrt{\lambda - \lambda_{j,m}}$, $u$ is said to satisfy
 the incoming radiation condition. 

 The condition (\ref{RadCond1}) is rephrased as follows. 
\begin{lemma}
 The condition (\ref{RadCond1}) is equivalent to 
\begin{equation}
 \big(\frac{\partial}{\partial|y|} - i\sqrt{\lambda + (\partial_{j,n+1})^2}\big)u \simeq 0.
\label{RadCond2}
\end{equation}
\end{lemma}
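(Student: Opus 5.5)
The plan is to reduce both conditions to statements about the Fourier coefficients of $u$ in the Neumann eigenbasis of $-(\partial_{j,n+1})^2$ on $(0,d_j)$. Orthogonality of this basis then turns (\ref{RadCond2}) into a sum over modes. The finitely many open modes ($\lambda_{j,m}<\lambda$) will reproduce (\ref{RadCond1}), and the infinitely many closed modes ($\lambda_{j,m}>\lambda$) will be shown to contribute nothing in the sense of $\simeq$. We work on each end $\Omega_j$ separately, in the coordinates $x=(y,x_{n+1})$ in which (\ref{RadCond1}) is stated. As in the paragraph before the lemma, it suffices to treat the model equation $(-\Delta_y-(\partial_{j,n+1})^2-\lambda)u=f\in\mathcal B$ with $u\in\mathcal B^\ast$; the metric perturbation only changes $f$ by a $\mathcal B$-term. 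We take $\lambda\notin\mathcal T(H)$. The operator $\sqrt{\lambda+(\partial_{j,n+1})^2}$ is defined by the spectral theorem, acting on $P_{j,m}$ as multiplication by $\sqrt{\lambda-\lambda_{j,m}}$, with the branch $\sqrt{\lambda-\lambda_{j,m}}=i\sqrt{\lambda_{j,m}-\lambda}$ when $\lambda_{j,m}>\lambda$.

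\emph{Step 1 (orthogonal splitting).} Since the $P_{j,m}$ are mutually orthogonal projections in $L^2((0,d_j))$ commuting with $\partial_{|y|}$, Parseval in $x_{n+1}$ gives, for $v=(\partial_{|y|}-i\sqrt{\lambda+(\partial_{j,n+1})^2})u$,
$$
\frac1R\int_{\Omega_j\cap\{|y|<R\}}|v|^2\,dx=\sum_m\frac1R\int_{|y|<R}\big\|\big(\partial_{|y|}-i\sqrt{\lambda-\lambda_{j,m}}\big)u_{j,m}\big\|^2_{L^2((0,d_j))}dy .
$$
It follows that (\ref{RadCond2}) holds if and only if each open mode satisfies (\ref{RadCond1}) and the closed part $w=\sum_{\lambda_{j,m}>\lambda}\big(\partial_{|y|}+\sqrt{\lambda_{j,m}-\lambda}\big)u_{j,m}$ satisfies $w\simeq0$. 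There are finitely many open modes, so that part of the sum is harmless.

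\emph{Step 2 (closed modes are in $L^2$).} For $\lambda_{j,m}>\lambda$ we set $\mu_m=\lambda_{j,m}-\lambda\ge\mu_\ast>0$, where $\mu_\ast$ is the gap to the nearest threshold above $\lambda$. The coefficient $u_{j,m}\in\mathcal B^\ast$ solves $(-\Delta_y+\mu_m)u_{j,m}=f_{j,m}$. The homogeneous massive equation has no nonzero tempered solutions, so by Fourier transform in $y$ we get $u_{j,m}=(-\Delta_y+\mu_m)^{-1}f_{j,m}$. This is the key point. The energy identity then gives
$$
\|\nabla_yu_{j,m}\|^2+\mu_m\|u_{j,m}\|^2\le \mu_m^{-1}\|f_{j,m}\|^2\le\mu_\ast^{-1}\|f_{j,m}\|^2 .
$$
Summing over $m$ and using $f\in\mathcal B\subset L^2$, we obtain $\|w\|^2_{L^2(\Omega_j)}\le 2\sum_m\big(\|\nabla_yu_{j,m}\|^2+\mu_m\|u_{j,m}\|^2\big)\le C\|f\|^2_{L^2}<\infty$.

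\emph{Step 3 (conclusion).} Any $w\in L^2$ satisfies $\frac1R\int_{|y|<R}|w|^2\le R^{-1}\|w\|^2_{L^2}\to0$, so $w\simeq0$ automatically. Combined with Step 1, this proves the equivalence of (\ref{RadCond1}) and (\ref{RadCond2}).

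The main obstacle is Step 2: the infinite sum of closed modes must be controlled uniformly in $m$. I would handle it exactly as above, through the uniform mass gap $\mu_\ast>0$, which requires $\lambda\notin\mathcal T(H)$. I would also need the identification of the $\mathcal B^\ast$ solution with the $L^2$ resolvent, and here $f\in\mathcal B\subset L^2$ suffices. If the metric is not Euclidean on $\Omega_j$, I would first move the difference $\Delta_G-\Delta_{G_j^{(0)}}$ to the right-hand side as in (\ref{DefineVj(z)}). Its decay $O(|x|^{-\delta_0})$ with $\delta_0>1$ keeps the new right-hand side in $\mathcal B$, using elliptic regularity of $u$ in $\mathcal B^\ast$ for the derivatives. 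The Neumann condition in the model coordinates is recovered via Lemma \ref{ExtensionLemma} and the correction $\mathcal E_jB_j$.
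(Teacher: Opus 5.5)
Your proposal is correct and follows essentially the paper's route: (\ref{RadCond2})$\Rightarrow$(\ref{RadCond1}) by applying $P_{j,m}$, and the converse by showing that the closed-mode part (modes with $\lambda_{j,m}>\lambda$) solves an invertible massive equation with $L^2$ right-hand side, so it contributes an $L^2$ term that is automatically $\simeq 0$. The paper simply asserts the bound $w_j=G_jP_j(\lambda)g_j\in H^2$, whereas your energy identity with the uniform gap $\mu_\ast$ makes the control uniform in $m$ explicit; one small correction to your final remark is that $u$ satisfies the Neumann condition for $G$ and you need the Euclidean one, so the right correction is the paper's $v_j=u_j-\mathcal E_j^{(0)}\partial_{n+1}u_j$ with the unperturbed extension operator, since $\mathcal E_jB_j$ converts in the opposite direction.
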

\begin{proof}
 Projecting by $P_{j,m}$, we can derive (\ref{RadCond1}) from (\ref{RadCond2}). We prove the converse.
 Take $\tilde\chi_j\in C^{\infty}(\Omega)$ such that
 $\tilde\chi_j = 1$ on $\Omega_j\cap\{|y|> R+1\}$, $\tilde\chi_j = 0$
 on $\Omega_j\cap\{|y| < R\}$ and on $\Omega\setminus\Omega_j$, $R$ being a sufficiently large constant.
 Letting $u_j = \tilde\chi_j u$, we have
$$
(- \Delta_y - (\partial_{j,n+1})^2 - \lambda)u_j = f_j,
$$
 where $f_j = 0$ on $\Omega_j \cap \{|y| < R\}$ and $\Omega\setminus \Omega_j$.
 Then 
$$
\partial_{n+1}u_j = \sum_{i=1}^n\alpha_i\partial_i u_j\quad 
{\it on}\quad \partial\Omega_j,
$$
 where $\alpha_i = O(|y|^{-\delta_0})$. Letting $\mathcal E_j^{(0)}$ be the operator of extension in Lemma \ref{ExtensionLemma} for the unperturbed case $- \Delta_y - (\partial_{j,n+1})^2$, 
 we put
$$
 v_j = u_j - \mathcal E_j^{(0)}\partial_{n+1}u_j.
$$
 Then $v_j$ satisfies
$$
 (- \Delta_y - (\partial_{j,n+1})^2 - \lambda)v_j = g_j \in L^{2,s}, \quad s > 1/2
$$
 and $\partial_{\nu^{(0)}}v_j = 0$ on $\partial\Omega_j$. 
Note that $\partial_{\nu^{(0)}} = \partial_{n+1}$.  Let $P_j(\lambda) = \sum_{\lambda_{j,m} > \lambda}P_{j,m}$,
 and put $w_j = P_j(\lambda)v_j$. Then $w_j$ satisfies the same equation with $g_j$ replaced by $P_j(\lambda)g_j$.
 As $\lambda_{j,m} > \lambda$, we can invert this equation to have
$$
 w_j = G_jP_j(\lambda)g_j,
$$
 where $G_j \in {\bf B}(H^0(\Omega;H^2(\Omega))$. Then, we have $w_j \in H^2(\Omega_j)$ and satisfies (\ref{RadCond2}).
 Therefore, $u_j$ has the same property, which proves the lemma.
\end{proof}
 The following lemma is then proven in the same way as Lemma 3.8 of \cite{IKL10}.
\begin{theorem}
\label{ExistnceuniquenessHelmholtz}
 Assume $\lambda \in (0, \infty) \setminus \mathcal E(H)$. Then, the solution  $u \in \mathcal B^{\ast}(\Omega)$ to
 the Helmholtz equation $(H - \lambda)u = f \in \mathcal B(\Omega)$ satisfying the outgoing or incoming radiation condition
 is unique.
\end{theorem}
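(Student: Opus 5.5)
By linearity it suffices to show that if $u \in \mathcal B^{\ast}(\Omega)$ solves $(H-\lambda)u = 0$ with the Neumann condition on $\partial\Omega$ and satisfies the outgoing radiation condition (\ref{RadCond1}) (equivalently (\ref{RadCond2})), then $u = 0$. The incoming case is identical after complex conjugation. I would follow the three-step scheme of Lemma 3.8 in \cite{IKL10}:
\begin{enumerate}
\item show that the radiation condition forces $u$ to be small at infinity in each slab;
\item use a Rellich-type argument in each slab to conclude that $u$ vanishes near infinity;
\item use unique continuation to get $u = 0$ on all of $\Omega$.
\end{enumerate}

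\emph{Step 1: flux identity.} Take $\rho \in C^\infty$ cutting off to $\{|y|<R\}$ in each $\Omega_j$ (equal to $1$ on $\mathcal K$), with $\nabla\rho$ supported in $R<|y|<2R$. Compute $0 = \mathrm{Im}\,((H-\lambda)u, \rho u)$ and integrate by parts; the Neumann condition kills the boundary terms on $\partial\Omega$. This gives
\[
\mathrm{Im}\int_\Omega \langle \nabla\rho,\nabla u\rangle_G\, \overline{u}\, d\Omega = 0 .
\]
On $\Omega_j$ near infinity the metric is Euclidean up to $O(|x|^{-\delta_0})$ with $\delta_0>1$, so the integral reduces to averages of $\mathrm{Im}(\partial_{|y|}u\,\overline u)$ over shells, plus error terms that vanish as $R\to\infty$ because $u\in\mathcal B^\ast$ and the decay is integrable. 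Insert (\ref{RadCond1}) for each open channel $\lambda_{j,m}<\lambda$; the closed channels are in $L^2$ (in fact $H^2$) by the argument in the proof of the preceding lemma. Letting $R\to\infty$ then yields
\[
\sum_{j}\sum_{\lambda_{j,m}<\lambda} \sqrt{\lambda-\lambda_{j,m}}\;\lim_{R\to\infty}\frac1R\int_{R<|y|<2R}\|P_{j,m}u(y,\cdot)\|^2\,dy = 0 .
\]
Combined with the radiation condition, this gives $u \simeq 0$ on each $\Omega_j$, i.e. $\nabla u\simeq 0$ and $u\simeq0$ in the sense of (\ref{ExpansioninBast(Omega)}).

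\emph{Step 2: decay and vanishing at infinity.} Straighten the boundary as in the proof of the previous lemma: set $v_j = \tilde\chi_j u - \mathcal E^{(0)}_j\partial_{n+1}(\tilde\chi_j u)$, which satisfies $(-\Delta_y-(\partial_{j,n+1})^2-\lambda)v_j = g_j$ with $g_j$ decaying like $|y|^{-\delta_0}$ times $u$ and its derivatives. Project onto the channels, so that $v_{j,m}=P_{j,m}v_j$ solves $(-\Delta_y - k_m^2)v_{j,m} = P_{j,m}g_j$ in $|y|>R$ with $k_m^2 = \lambda-\lambda_{j,m}$.
\begin{itemize}
\item For closed channels ($\lambda_{j,m}>\lambda$) the operator is elliptic-positive, and the finitely many channels near $\lambda$ are excluded because $\lambda\notin\mathcal T(H)$. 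Hence these components decay rapidly once $g_j$ does.
\item For the finitely many open channels, use the $\mathbf R^n$ Rellich uniqueness theorem with short-range perturbation $\delta_0>1$: $v_{j,m}\simeq0$ together with the equation implies $v_{j,m}\in L^{2,s}$ for all $s$.
\end{itemize}
Bootstrapping, with each gain in decay of $u$ improving the decay of $g_j$, gives $u\in L^{2,s}(\Omega_j)$ for all $s$. Then a Carleman/Agmon-type estimate in $|y|$ for the perturbed slab operator, as in \cite{IKL10}, shows that $u=0$ for $|y|$ large. For this step I would use the weight $e^{\tau w(x)}$, with $w$ the function from Lemma \ref{ExtensionLemma}(1), whose Neumann derivative vanishes on $\partial\Omega_j(R)$.

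\emph{Step 3: unique continuation.} Since $u$ vanishes on an open subset of the connected manifold $\overline\Omega$ and $-\Delta_G-\lambda$ has smooth coefficients up to the Neumann boundary, unique continuation gives $u=0$ on $\Omega$. Note that $\lambda\notin\sigma_p(H)$ is not even needed once decay is established; it is consistent with the hypothesis.

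The main obstacle is Step 2, i.e. passing from $u\simeq0$ to super-polynomial decay and then to vanishing, uniformly over infinitely many closed channels. The difficulty is that the channel projections do not commute with the perturbed metric and boundary. I would handle this by keeping the perturbation as a source term $g_j$ of order $|y|^{-\delta_0}$ and treating the closed channels collectively through the operator $P_j(\lambda)$ and the inverse $G_j$ used in the previous proof. Only the finitely many open channels then need the scalar Rellich argument.
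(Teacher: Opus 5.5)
Your Step 1 and the decay bootstrap in Step 2 are the right ingredients, but the endgame fails. In a slab, decay at infinity cannot be upgraded to vanishing at infinity, so the Carleman/Agmon step with weight $e^{\tau w(x)}$ cannot give $u=0$ for large $|y|$. Already on a flat end, for any closed channel $\lambda_{j,m}>\lambda$ consider the evanescent mode
\[
|y|^{-(n-2)/2}\,K_{(n-2)/2}\big(\sqrt{\lambda_{j,m}-\lambda}\,|y|\big)\,\varphi_{j,m}(x_{n+1}),
\]
where $K_\nu$ is the modified Bessel function. It solves $(-\Delta_y-(\partial_{j,n+1})^2-\lambda)u=0$ with the Neumann condition for $|y|>R$. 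It decays exponentially, so it lies in $L^{2,s}$ for every $s$, and it vanishes on no open set.

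The Euclidean Kato/Rellich mechanism therefore has no analogue for closed channels. For the same reason, your remark that $\lambda\notin\sigma_p(H)$ is not needed is false: domains of the type considered here can have eigenvalues embedded in $(0,\infty)$. For example, take ${\bf R}^n\times(0,1)$ with a large bulge symmetric about $x_{n+1}=1/2$. Among functions odd in $x_{n+1}-1/2$, min-max gives an eigenvalue below that sector's threshold $\pi^2$. Such an eigenfunction is a nonzero element of $L^2\subset\mathcal B^\ast$ solving the homogeneous problem. It satisfies (\ref{RadCond1}) trivially, since $u_{j,m}$ and $\partial_{|y|}u_{j,m}$ are in $L^2$. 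This is exactly why $\sigma_p(H)$ is part of $\mathcal E(H)$.

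To repair the proof, drop the Carleman step and your Step 3. All you need from Step 2 is $u\in L^2(\Omega)$, which your bootstrap already gives. Then $-\Delta_G u=\lambda u\in L^2$ and $\partial_\nu u=0$, together with elliptic regularity up to the boundary (uniform on the ends), give $u\in D(H)$ and $Hu=\lambda u$. Hence $u=0$ because $\lambda\notin\sigma_p(H)$. No unique continuation is needed. This is how the cylindrical-end argument of \cite{IKL10}, which the paper invokes, has to close as well.
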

 In the course of the proof of Theorem \ref{LAPforH}, it is shown that the resolvent $R(\lambda \pm i0)$ satisfies the radiation condition. We then have the following corollary.
\begin{cor}
For $\lambda \in (0,\infty)\setminus{\mathcal E}(H)$ and $f \in \mathcal B$, $R(\lambda \pm i0)f$ is a unique solution to the equation $(H - \lambda)u = f$ satisfying the outgoing or incoming radiation condition.
\end{cor}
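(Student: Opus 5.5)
The corollary follows by combining Theorem \ref{LAPforH} with the uniqueness in Theorem \ref{ExistnceuniquenessHelmholtz}. I will treat the outgoing sign $\lambda + i0$; the incoming case is identical after replacing $i\sqrt{\lambda-\lambda_{j,m}}$ by $-i\sqrt{\lambda-\lambda_{j,m}}$. Fix $\lambda \in (0,\infty)\setminus\mathcal E(H)$ and $f\in\mathcal B$, and set $u = R(\lambda+i0)f$. Three facts are needed: $u\in\mathcal B^{\ast}$, $(H-\lambda)u=f$ with the Neumann condition, and $u$ satisfies the outgoing radiation condition (\ref{RadCond1}).

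\emph{Step 1: $u\in\mathcal B^{\ast}$.} This is part (3) of Theorem \ref{LAPforH}, which gives $\|u\|_{\mathcal B^{\ast}}\le C\|f\|_{\mathcal B}$.

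\emph{Step 2: $u$ solves the equation.} Put $u_\epsilon = R(\lambda+i\epsilon)f$. Then $(H-\lambda-i\epsilon)u_\epsilon=f$ and $u_\epsilon\in D(H)$, so $u_\epsilon$ satisfies the Neumann condition. For a test function $\varphi\in C_0^\infty(\overline\Omega)$ with $\partial_\nu\varphi=0$ on $\partial\Omega$ we have
\[
(u_\epsilon,(H-\lambda+i\epsilon)\varphi)=(f,\varphi).
\]
Since $(H-\lambda+i\epsilon)\varphi$ has compact support, it lies in $\mathcal B$. The weak convergence in Theorem \ref{LAPforH}(2) therefore gives $(u,(H-\lambda)\varphi)=(f,\varphi)$. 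So $u$ is a distributional solution satisfying the Neumann condition in the weak sense. Local elliptic regularity up to the boundary then gives $u\in H^2_{loc}(\overline\Omega)$, with $(H-\lambda)u=f$ and $\partial_\nu u=0$.

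\emph{Step 3: the radiation condition.} This is the main obstacle, because it is not stated explicitly in Theorem \ref{LAPforH}; the paper only records that it is established in the course of the proof, as in \cite{IKL10}. To verify it directly, I would apply the resolvent equation (\ref{chijR(z)=tildechijJj-1R(0)j(z)}) of Lemma \ref{LemmaResolventEquation} and let $z=\lambda+i\epsilon\to\lambda+i0$. This gives
\[
\tilde\chi_j u=\tilde\chi_j J_j^{-1}R_j^{(0)}(\lambda+i0)J_j g_j,
\qquad
g_j=\big(\chi_j-(\mathcal E_jB_j)^{\ast}-\mathcal V_j(\lambda-i0)^{\ast}R(\lambda+i0)\big)f .
\]
By (\ref{Condgij(x)-edltaij}) with $\delta_0>1$, the perturbation terms decay, so they map $\mathcal B^{\ast}$ into $L^{2,s}$ for some $s>1/2$. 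Hence $g_j\in\mathcal B$.

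For the flat slab model, project onto the Neumann eigenfunctions on $(0,d_j)$. This gives $P_{j,m}R_j^{(0)}(\lambda+i0)=(-\Delta_y-(\lambda-\lambda_{j,m})-i0)^{-1}P_{j,m}$. For $\lambda_{j,m}<\lambda$, the standard Euclidean radiation estimate on ${\bf R}^n$ yields $(\partial_{|y|}-i\sqrt{\lambda-\lambda_{j,m}})P_{j,m}R_j^{(0)}(\lambda+i0)h\simeq0$ for $h\in\mathcal B$. The factor $J_j^{\pm1}=1+O(|y|^{-\delta_0})$ and the cutoff $\tilde\chi_j$ only produce $\simeq0$ errors. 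This gives (\ref{RadCond1}) for every $j$ and every $m$ with $\lambda_{j,m}<\lambda$.

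\emph{Conclusion.} $u$ is a solution in $\mathcal B^{\ast}$ satisfying the outgoing radiation condition. By Theorem \ref{ExistnceuniquenessHelmholtz} it is the only such solution, which proves the corollary.
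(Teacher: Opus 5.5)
Your proposal is correct and follows the same route as the paper, which gets the corollary by combining Theorem~\ref{LAPforH} (whose proof, the paper notes, shows that $R(\lambda\pm i0)f$ satisfies the radiation condition) with the uniqueness in Theorem~\ref{ExistnceuniquenessHelmholtz}. Your Steps 2 and 3 spell out what the paper leaves to the proof of the LAP, namely the radiation condition channel by channel from the resolvent equation (\ref{chijR(z)=tildechijJj-1R(0)j(z)}) and the free Euclidean estimate; this step also needs $\delta_0>1$ and local $H^2$ control of $R(\lambda+i0)f$, so that $\mathcal V_j(\lambda)^{\ast}R(\lambda+i0)f\in\mathcal B$.
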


\subsection{Free spectral representation}
 To derive the eigenfunction expansion, we first consider $H^{(0)}_0 = -\Delta_y -(\partial_{n+1})^2$
 in the euclidean cylinder ${\bf R}^n\times (0,1)$ with Neumann condition on $x_{n+1} = 0, 1$.
 Let  $G_0(z) = (-\Delta_y -z)^{-1}$ be the Green operator for $- \Delta_y$ on ${\bf R}^n$.
 Let $\lambda_{\ell}$ and $P_{\ell}$, $\ell = 1, 2, \dots$ be the eigenvalues and  eigenprojections of $-(\partial_{n+1})^{2}$ on $\lbrack  0, 1\rbrack$. 
 Then for $z \in {\bf C}\setminus [0,\infty)$, $R^{(0)}_0(z) = (H^{(0)}_0 - z)^{-1}$ is written as
\begin{equation}
R^{(0)}_0(z) = \sum _{\ell=1}^{\infty} G_0(z -\lambda_{\ell})\otimes P_{\ell}.
\label{R00=G0Pell}
\end{equation}
 We put
\begin{equation}
c_0(\lambda) =  \frac{\lambda^{(n-2)/4}}{{\sqrt 2}}, \quad \lambda>0,
\label{DefineC0laambda}
\end{equation}
 and define the Fourier transform $\mathcal{F}_0(\lambda)$ on ${\bf R}^n$ by 
\begin{equation}
\mathcal{F}_0(\lambda) f = c_0(\lambda)\hat{f}(\sqrt{\lambda}\omega) =  c_0(\lambda)(2\pi)^{-n/2}\int_{{\bf R}^n}e^{- i\sqrt{\lambda}\omega\cdot y}f(y) dy, \quad \omega \in S^{n-1},
\label{DefineF0lambdaf}
\end{equation}
\begin{equation}
\hat f(\sqrt{\lambda}\omega) = (2\pi)^{-n/2}\int_{{\bf R}^n} e^{-i\sqrt{\lambda}\omega\cdot y}f(y)dy.
\nonumber
\end{equation}
 We also define
\begin{equation}
\mathcal F_0^{(\pm)}(\lambda)f(\omega) = \big(\mathcal F_0(\lambda)f\big)(\pm \omega).
\nonumber
\end{equation}
 It is well-known that for any $f \in \mathcal B({\bf R}^n)$ and $\lambda > 0$,  $G_0(\lambda \pm i0)f$ has the expansion
\begin{equation}
G_0(\lambda \pm i0)f(\omega) \simeq c_1^{(\pm)}(\lambda)r^{-(n-1)/2}
e^{\pm i\sqrt{\lambda}r}\hat f(\pm \sqrt{\lambda}\omega),
\nonumber
\end{equation}
\begin{equation}
c_1^{(\pm)}(\lambda) = \sqrt{\frac{\pi}{2}}e^{\pm (3-n)\pi i/4}\lambda^{(n-3)/4},
\nonumber
\end{equation}
in $\mathcal B^{\ast}({\bf R}^n)$ 
as $r= |y| \to \infty$, $\omega = y/r$.
This together with (\ref{R00=G0Pell})  implies the following lemma.

\begin{lemma}
\label{Lemmaresolventexpandfree}
For $\lambda \in (0,\infty)\setminus\{\lambda_{\ell}\}_{\ell=1}^{\infty}$, the limit
$R^{(0)}_0(\lambda \pm i0) : \mathcal B \to \mathcal B^{\ast}$ exists, and for $f \in \mathcal B$, the expansion
\begin{equation} 
 R^{(0)}_0(\lambda\pm i 0)f\simeq {\mathop\sum _{\lambda_{\ell} <\lambda}}
 c_{\pm}(\lambda - \lambda_{\ell}) \frac{e^{\pm i|y| \sqrt{\lambda -\lambda_{\ell}}}}{\vert y\vert^{(n-1)/2}}\mathcal{F}_0^{(\pm)}(\lambda - \lambda_{\ell})\otimes P_{\ell} f, 
\label{freeR00fexpansion}
\end{equation}
\begin{equation}
 c_{\pm}(\lambda) := e^{\pm (3-n)\pi i/4}\sqrt{\pi}\lambda^{-1/4}, \quad \lambda>0,
\label{Definecpmlambda}
\end{equation}
 holds in the sense of (\ref{ExpansioninBast(Omega)}). 
\end{lemma}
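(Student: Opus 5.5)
The plan is to decompose along the Neumann eigenbasis of $-(\partial_{n+1})^2$ on $(0,1)$, using (\ref{R00=G0Pell}), and treat each mode separately. Fix $\lambda \in (0,\infty)\setminus\{\lambda_\ell\}$ and $f\in\mathcal B$, and write $f_\ell = P_\ell f$. Then $R^{(0)}_0(z)f = \sum_\ell G_0(z-\lambda_\ell)f_\ell$, where for each fixed $x_{n+1}$ the function $f_\ell(\cdot,x_{n+1})$ is a scalar multiple of a $y$-function in $\mathcal B({\bf R}^n)$. More precisely, if $\phi_\ell$ denotes the normalized eigenfunction, then $f_\ell = g_\ell(y)\phi_\ell(x_{n+1})$ with $g_\ell=(f,\phi_\ell)_{L^2(0,1)}$. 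By Cauchy--Schwarz in $x_{n+1}$ and the orthogonality of the $\phi_\ell$, we get $\sum_\ell\|g_\ell\|_{\mathcal B({\bf R}^n)}^2\lesssim \|f\|_{\mathcal B}^2$. This needs care, since the $\mathcal B$ norm is an $\ell^1$ sum over dyadic shells of $L^2$ norms; one checks it shell by shell and then uses Minkowski's inequality.

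We then split the modes into three groups. For the finitely many $\ell$ with $\lambda_\ell<\lambda$, the number $\lambda-\lambda_\ell$ is positive, and the known one-dimensional-in-modes result gives existence of $G_0(\lambda-\lambda_\ell\pm i0)g_\ell$ in $\mathcal B^\ast({\bf R}^n)$. The stated expansion also holds, with the factor $c_1^{(\pm)}(\mu)\hat g_\ell(\pm\sqrt\mu\,\omega)$, $\mu=\lambda-\lambda_\ell$. Rewriting via (\ref{DefineF0lambdaf}) gives $\hat g_\ell(\pm\sqrt\mu\omega)=c_0(\mu)^{-1}\mathcal F_0^{(\pm)}(\mu)g_\ell$. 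The coefficient identity is a direct computation:
\begin{equation}
c_1^{(\pm)}(\mu)/c_0(\mu)=\sqrt{\pi}\,e^{\pm(3-n)\pi i/4}\mu^{(n-3)/4-(n-2)/4}=c_\pm(\mu).
\nonumber
\end{equation}
Tensoring with $\phi_\ell$ reproduces the $\ell$-th term of (\ref{freeR00fexpansion}).

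For the infinitely many $\ell$ with $\lambda_\ell>\lambda$, the operator $G_0(\lambda-\lambda_\ell)$ is the resolvent at a negative spectral parameter, so no limit is needed. Its operator norm on $L^2$ is $(\lambda_\ell-\lambda)^{-1}$. Because $\lambda_\ell-\lambda\ge \lambda_\ell-\lambda_{\ell_\ast}$ is bounded below by a positive constant, its kernel decays exponentially with rate $\sqrt{\lambda_\ell-\lambda}$, and the bound is uniform in $\ell$. Hence $G_0(\lambda-\lambda_\ell)$ maps $L^{2,s}\to L^{2,s}$ for every $s$, with uniform bounds. In particular it maps $\mathcal B\subset L^{2,1/2}$ into $L^{2,1/2}$.

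Summing over $\ell$ with orthogonality, the tail $\sum_{\lambda_\ell>\lambda}G_0(\lambda-\lambda_\ell)f_\ell$ lies in $L^{2,1/2}\subset L^2$. Any $L^2$ function is $\simeq 0$ in the sense of (\ref{ExpansioninBast(Omega)}), since $R^{-1}\|u\|_{L^2}^2\to0$. So these terms contribute nothing to the asymptotics. Boundedness $\mathcal B\to\mathcal B^\ast$ of the full operator follows by combining the finite sum with this tail. The limit as $\epsilon\to0$ converges for the tail in the operator norm, and for the finite part by the scalar LAP.

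The main obstacle is making the tail estimate and the convergence of the limit uniform in $\ell$. The step from $\mathcal B$ to $L^{2,1/2}$ for the full sum is immediate. The real check is that the weighted bound for $G_0(-\kappa)$, $\kappa\ge\kappa_0>0$, does not degrade as $\kappa\to\infty$. I would handle this with the commutator identity $[\langle y\rangle^{s},-\Delta_y]$, which shows the weighted norm is controlled by $\kappa^{-1}+\kappa^{-3/2}$, and that bound is uniform in $\kappa$.
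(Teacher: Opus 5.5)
Your proposal is correct and is essentially the paper's argument: decompose into Neumann modes via (\ref{R00=G0Pell}), apply the well-known expansion of $G_0(\mu\pm i0)$ on $\mathcal B({\bf R}^n)$ to the finitely many open channels $\lambda_\ell<\lambda$, and convert the constants via $c_1^{(\pm)}(\mu)/c_0(\mu)=c_\pm(\mu)$, a step the paper leaves implicit. The only remark is that the closed channels need no weighted estimates: since $\|G_0(\lambda-\lambda_\ell)\|_{L^2\to L^2}\le(\lambda_{\ell_\ast}-\lambda)^{-1}$ uniformly in $\ell$ (your lower bound should read $\lambda_\ell-\lambda\ge\lambda_{\ell_\ast}-\lambda$), orthogonality together with $\mathcal B\subset L^2$ already puts the tail in $L^2$, hence $\simeq 0$, so the ``main obstacle'' you flag is not really one.
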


Given a slab domain having $N$ bands, let $\lambda_{j,\ell}$ be the Neumann eigenvalue and
 $\varphi_{j,\ell}$ the normalized Neumann eigenfunction for $- (\partial_{n+1})^2$ of the $j$-th model band
 $\Omega_j = {\bf R}^n\times (0,d_j)$. 
In the following, for the sake of simplicity of description, we assume that $d_j = 1$. 
 We put
\begin{equation}
\Psi_{j,\ell}^{(0)}(y,x_{n+1};\lambda,\omega) = 
c_0(\lambda - \lambda_{j,\ell})(2\pi)^{-n/2}
e^{i\sqrt{\lambda - \lambda_{j,\ell}}\omega\cdot y}
\varphi_{j,\ell}(x_{n+1}), \quad \lambda > \lambda_{j,\ell}.
\label{DefinePsijell0ycn+1}
\end{equation}
 In view of (\ref{DefineF0lambdaf}) and (\ref{freeR00fexpansion}), we define the free Fourier transformation
\begin{equation}
\mathcal F_{j,\ell}^{(0)}(\lambda)f(\omega,x_{n+1}) =
\left(\int_{{\bf R}^n\times (0,1)}
\overline{\Psi^{(0)}_{j,\ell}(y,x_{n+1}';\lambda,\omega)}f(y,x_{n+1}')dydx_{n+1}'\right) \varphi_{j,\ell}(x_{n+1}),
\nonumber
\end{equation}
\begin{equation}
\mathcal F^{(0)}_j f(\lambda) = \sum_{\ell=1}^{\infty}
\mathcal F^{(0)}_{j,\ell}(\lambda)f
\nonumber
\end{equation}
 for each model band $\Omega_j$, which is first defined on $C_0^{\infty}(\Omega_j)$
 and then extended to whole $L^2(\Omega_j)$. 
 The total free Fourier transformation is defined by
\begin{equation}
\mathcal{F}^{(0)}= (\mathcal{F}^{(0)}_1, \mathcal{F}^{(0)}_2, \ldots, \mathcal{F}^{(0)}_N), 
\label{totalFouriertransformF0}
\end{equation}
 which is unitary from $L^2(\cup_{j=1}^N\Omega_j)$ to $\oplus_{j=1}^NL^2((0,\infty);L^2(S^{n-1}\times
  (0,d_j));d\lambda)$ 
 \footnote{Here, for an interval $I \subset {\bf R}$ and a Hilbert space $\mathcal X$, $L^2(I;\mathcal X;d\lambda)$
 denotes the set of all $\mathcal X$-valued $L^2$-functions on $I$ with respect to the measure $d\lambda$.}.

\subsection{Perturbed spectral representation}
\label{Perturbedspectralrepresent}
When $\delta_0 > (n+1)/2$,
the generalized eigenfunctions for $H$ are defined by
\begin{equation}
 \Psi_{j,\ell,\pm}(\lambda) = (\chi_j -\mathcal{E}_jB_j)\Psi^{(0)}_{j,\ell}(\lambda)
 - R(\lambda  \textcolor{blue}{\pm} i 0)\mathcal V_j(\lambda) \Psi^{(0)}_{j,\ell}(\lambda).
\label{Psijlpm=...-R(lambda-i0)}
\end{equation}
 It satisfies
\begin{equation}
\left\lbrace
\begin{split}
&(H-\lambda)\Psi_{j,\ell,\pm}(\lambda) = 0\quad  \mbox{in}\quad \Omega,\\
&\partial _\nu \Psi_{j,\ell,\pm}(\lambda) = 0 \quad \mbox{on} \quad \partial\Omega.
\end{split}
\right.
\nonumber
\end{equation}

 Let ${\bf h}_j(\lambda) = L^2(S^{n-1})\times {\rm span}\,\{\varphi_{j,\ell} \, ; \, \lambda_{j,\ell} < \lambda\}$, where ${\rm span}\, A $ means the linear hull of the set $A$, 
 and put
\begin{equation}
{\bf h}(\lambda) = \oplus_{j=1}^N{\bf h}_j(\lambda).
\label{hlambdadefine}
\end{equation}

 To deal with the general case, we consider the operator (cf. (\ref{chijR(z)=tildechijJj-1R(0)j(z)}))
\begin{equation}
\mathcal{F}_{j,\ell,\pm}(\lambda) = \mathcal{F}^{(0)}_{j,\ell}\big(\lambda) J_j \Big(\chi_j -(\mathcal{E}_jB_j)^{\ast} -\mathcal V_j(\lambda)^{\ast}R(\lambda \mp i 0)\Big),
\label{mathcalFjellpmlambdadefine}
\end{equation}
 which is well-defined on $\mathcal B$ when $\delta_0 > 1$, i.e. 
\begin{equation}
 \mathcal F_{j,\ell,\pm}(\lambda) \in {\bf B}(\mathcal B; {\bf h}_j(\lambda)), \quad \lambda \in (0,\infty)\setminus \mathcal E(H).
 \nonumber
\end{equation}
If $\delta_0 > (n+1)/2$,  by (\ref{Psijlpm=...-R(lambda-i0)}), $\mathcal{F}_{j,\ell,\pm}(\lambda)^{\ast}$ is the operator with integral kernel $\Psi_{j,\ell,\pm}(y, x_{n+1};\lambda,\omega)$.
 Letting $\chi_{\lambda_{j,\ell}}(\lambda)$ be the characteristic function of the interval $(\lambda_{j,\ell},\infty)$, 
 we define
\begin{equation}
\mathcal{F}_{j,\pm}(\lambda) = \sum_{\ell=1}^{\infty} \chi_{\lambda_j,\ell}(\lambda) \mathcal{F}_{j,\ell,\pm}(\lambda) = \sum _{\lambda_{j,\ell}<\lambda}\mathcal{F}_{j,\ell,\pm}(\lambda),
\nonumber
\end{equation}
\begin{equation}
\mathcal{F}_\pm (\lambda) = (\mathcal{F}_{1,\pm}(\lambda), \ldots, \mathcal{F}_{N,\pm}(\lambda)).
\nonumber
\end{equation}
 By the resolvent equations (\ref{chijR(z)=tildechijJj-1R(0)j(z)1}), (\ref{chijR(z)=tildechijJj-1R(0)j(z)}),
 the expansion in Lemma \ref{Lemmaresolventexpandfree} is  extended to the perturbed operator.
\begin{lemma}
\label{LemmaRlambdafexpansion}
 For any $\lambda\in (0,\infty)\setminus \mathcal{E}(H)$ and $f\in \mathcal{B},$ we have  on $\Omega_j$
\begin{equation} 
 R(\lambda\pm \imath 0)f\simeq {\mathop\sum_{\lambda_{j,\ell} <\lambda}}c_{\pm}(\lambda - \lambda_{j,\ell}) \frac{e^{\pm i  \sqrt{\lambda -\lambda_{j,\ell}}\vert y\vert}}{\vert y\vert^{(n-1)/2}}\mathcal{F}_{j,\ell,\pm}(\lambda) f.
\label{Rlambdapmi0fexpansion1}
\end{equation}
\end{lemma}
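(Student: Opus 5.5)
The plan is to localize to the end $\Omega_j$ with the resolvent equation (\ref{chijR(z)=tildechijJj-1R(0)j(z)}) of Lemma \ref{LemmaResolventEquation}, pass to the limit $z = \lambda \pm i\epsilon \to \lambda \pm i0$, and read off the asymptotics from the free expansion in Lemma \ref{Lemmaresolventexpandfree}.

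\textbf{Localization and limit.} Take $\tilde\chi_j$ as in Lemma \ref{LemmaResolventEquation}. Since $R(\lambda\pm i0)f$ is only needed near infinity in $\Omega_j$, it suffices to expand $\tilde\chi_j R(\lambda\pm i0)f$. For $z\notin{\bf R}$ we have
$$
\tilde\chi_j R(z)f = \tilde\chi_j J_j^{-1}R_j^{(0)}(z)\,g_z, \qquad g_z := J_j\Big(\chi_j - (\mathcal E_jB_j)^{\ast} - \mathcal V_j(\overline z)^{\ast}R(z)\Big)f .
$$
I let $\epsilon\to0$ in this identity. Theorem \ref{LAPforH} gives $R(\lambda\pm i\epsilon)f\to R(\lambda\pm i0)f$ weakly in $\mathcal B^\ast$. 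The coefficients of $\mathcal V_j(\overline z)^\ast$ decay like $|y|^{-1-\delta_0}$: they come from $g_{ij}-\delta_{ij}$ and its derivatives, from commutators with $\chi_j$ (compactly supported), and from $w^{-\delta_0}$ factors. Because $\delta_0>1$, these coefficients map $\mathcal B^\ast$ into $\mathcal B$, and elliptic regularity handles the derivatives. Hence $g_z\to g_\pm := J_j\big(\chi_j-(\mathcal E_jB_j)^\ast-\mathcal V_j(\lambda)^\ast R(\lambda\pm i0)\big)f$ in $\mathcal B$; the $z$-dependence of $\mathcal V_j$ is only the harmless term $z\,\mathcal E_jB_j$. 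Using Lemma \ref{Lemmaresolventexpandfree} on the band $\Omega_j$, extended to the slab $\Omega_j$ by the Neumann boundary reduction, I obtain
$$
\tilde\chi_j R(\lambda\pm i0)f = \tilde\chi_j J_j^{-1}R_j^{(0)}(\lambda\pm i0)\,g_\pm .
$$

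\textbf{Expansion.} Apply (\ref{freeR00fexpansion}) to $g_\pm\in\mathcal B$ to get
$$
R_j^{(0)}(\lambda\pm i0)g_\pm \simeq \sum_{\lambda_{j,\ell}<\lambda} c_\pm(\lambda-\lambda_{j,\ell})\,|y|^{-(n-1)/2}e^{\pm i\sqrt{\lambda-\lambda_{j,\ell}}|y|}\,\mathcal F^{(0)}_{j,\ell}(\lambda)g_\pm .
$$
Here $\mathcal F^{(0)}_{j,\ell}(\lambda)$ contains both the $\pm\omega$ convention and the factor $\varphi_{j,\ell}$; this is matched against the definition of $\mathcal F_{j,\ell,\pm}$ with $R(\lambda\mp i0)$, and I only need to be careful with signs. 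Two corrections remain.
\begin{itemize}
\item[(i)] $J_j^{-1}=1+O(|x|^{-\delta_0})$, so multiplying a $\mathcal B^\ast$ function by $J_j^{-1}-1$ produces a function that is $\simeq 0$. The same holds for $\tilde\chi_j-1$, which is compactly supported.
\item[(ii)] The resulting coefficient is exactly $\mathcal F^{(0)}_{j,\ell}(\lambda)g_\pm = \mathcal F_{j,\ell,\pm}(\lambda)f$ by (\ref{mathcalFjellpmlambdadefine}).
\end{itemize}
This gives (\ref{Rlambdapmi0fexpansion1}).

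\textbf{Main obstacle.} The delicate step is showing that $\mathcal V_j(\lambda)^\ast R(\lambda\pm i0)f\in\mathcal B$ with only $\delta_0>1$, and that the limit passage is legitimate. $\mathcal V_j$ is a second-order operator whose coefficients are $O(|y|^{-\delta_0})$, with $O(|y|^{-1-\delta_0})$ for lower order terms. So I need a weighted estimate of the form $\|\nabla^k R(\lambda\pm i0)f\|_{\mathcal B^\ast}\lesssim \|f\|_{\mathcal B}$ for $k\le 2$, obtained from interior and boundary elliptic estimates on dyadic shells, with Neumann data controlled via the construction of $\mathcal E_jB_j$. After that, the multiplication $|y|^{-\delta_0}\colon\mathcal B^\ast\to\mathcal B$ holds since $\delta_0>1$: summing $2^{\ell/2}\cdot2^{-\ell\delta_0}\cdot 2^{\ell/2}$ converges. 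I would follow the argument for Lemma 3.3 and Theorem 3.10 of \cite{IKL10}.
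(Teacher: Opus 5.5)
Your scheme is the paper's. The paper proves this lemma only by remarking that the resolvent equations of Lemma \ref{LemmaResolventEquation} carry the free expansion of Lemma \ref{Lemmaresolventexpandfree} over to $H$, and your steps (localize by (\ref{chijR(z)=tildechijJj-1R(0)j(z)}), let $z\to\lambda\pm i0$, read off coefficients, using dyadic elliptic estimates and $|y|^{-\delta_0}\colon\mathcal B^{\ast}\to\mathcal B$ for $\delta_0>1$) are exactly that. Two steps are nevertheless wrong as written.

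First, (ii) does not follow from (\ref{mathcalFjellpmlambdadefine}). Your $g_\pm$ contains $R(\lambda\pm i0)$, whereas (\ref{mathcalFjellpmlambdadefine}) builds $\mathcal F_{j,\ell,\pm}(\lambda)$ with $R(\lambda\mp i0)$. So the cited definition gives $\mathcal F^{(0)}_{j,\ell}(\lambda)g_\pm=\mathcal F_{j,\ell,\mp}(\lambda)f$, which in general differs from $\mathcal F_{j,\ell,\pm}(\lambda)f$ (the two are linked by $\widehat S(\lambda)$). Being careful with signs does not fix this. Your argument proves the expansion with coefficient $\mathcal F^{(0)}_{j,\ell}(\lambda)J_j\bigl(\chi_j-(\mathcal E_jB_j)^{\ast}-\mathcal V_j(\lambda)^{\ast}R(\lambda\pm i0)\bigr)f$. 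The lemma is therefore correct for the convention in which $\mathcal F_{j,\ell,+}$ carries $R(\lambda+i0)$, which is the one the paper itself uses for $A_{1m,1n}(\lambda)$ in (\ref{NewA1m1n}). You should state explicitly that the sign displayed in (\ref{mathcalFjellpmlambdadefine}) must be read this way. Relatedly, $\mathcal F^{(0)}_{j,\ell}(\lambda)$ is built from $\Psi^{(0)}_{j,\ell}$ and does not contain the $\pm\omega$ convention: for the lower sign, Lemma \ref{Lemmaresolventexpandfree} gives the coefficient evaluated at $-\omega$.

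Second, $g_\pm$ is not an element of $\mathcal B$. The operators $(\mathcal E_jB_j)^{\ast}$ and $\mathcal V_j(\overline z)^{\ast}$ are adjoints of operators involving boundary traces of derivatives, so they produce distributions supported on $\partial\Omega_j$. This is why Lemma \ref{LemmaResolventEquation} only speaks of the composites $R^{(0)}_j(z)J_j(\mathcal E_jB_j)^{\ast}$ and $R^{(0)}_j(z)J_j\mathcal V_j(\overline z)^{\ast}R(z)$. In fact, Green's formula shows that $g_z$ is $(H^{(0)}_j-z)\bigl(J_j\chi_jR(z)f\bigr)$ taken in the weak (Neumann) sense: an interior function plus a single layer on $\partial\Omega_j$ with density $\partial_{\nu^{(0)}}\bigl(J_j\chi_jR(z)f\bigr)$. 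So Lemma \ref{Lemmaresolventexpandfree}, which needs an input in $\mathcal B$, cannot be applied to $g_\pm$ directly, and your weighted estimates only control the interior part.

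The repair is the boundary correction the paper uses to prove (\ref{RadCond2}): subtract from $J_j\chi_jR(z)f$ the extension $\mathcal E^{(0)}_j$ of its flat Neumann datum. Since $\partial_\nu R(z)f=0$, this datum carries a factor $O(|y|^{-\delta_0})$ up to a compactly supported term, so the correction is $\simeq 0$. The remainder satisfies the flat Neumann condition and equals $R^{(0)}_j(z)\tilde g_z$, with $\tilde g_z\in\mathcal B$ a genuine function to which your estimates and the free expansion apply. Green's formula then identifies $\mathcal F^{(0)}_{j,\ell}(\lambda)\tilde g_\pm$ with the coefficient above, the boundary terms in (\ref{mathcalFjellpmlambdadefine}) being understood by duality.

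Finally, weak convergence of $R(\lambda\pm i\epsilon)f$ in $\mathcal B^{\ast}$ does not give norm convergence of $g_z$ (or $\tilde g_z$). Take the limit for $f\in L^{2,s}$ with $1/2<s<\delta_0-1/2$, where Theorem \ref{LAPforH} gives strong convergence in $L^{2,-s}$ (upgraded to $H^{2,-s}$ by elliptic estimates). Then pass to $f\in\mathcal B$ by density, using that $\simeq$ is stable under limits in $\mathcal B^{\ast}$.
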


 The following Parseval's formula is proven in the same way as \cite{IKL10}, Lemma 4.5.
\begin{lemma}
 For any $\lambda \in (0,\infty) \setminus\mathcal{E}(H)$ and $f\in \mathcal{B}$, we have
\begin{equation}
\frac{1}{2\pi \imath}\big((R(\lambda + i 0) -R(\lambda -i 0))f,f\big) =
 \Vert \mathcal{F}_{\pm}(\lambda)f\Vert^2_{{\bf h}(\lambda)}
 \nonumber
\end{equation}
\end{lemma}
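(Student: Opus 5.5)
The plan is to follow the standard Stone's formula argument, as in \cite{IKL10}, Lemma 4.5. The formula has to be read for each sign separately: $\mathcal F_+(\lambda)$ and $\mathcal F_-(\lambda)$ give the same norm.

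\textbf{Step 1: reduce to a flux identity.} Fix $f\in\mathcal B$ and set $u=R(\lambda+i0)f$. Since $R(\lambda-i0)=R(\lambda+i0)^{\ast}$ in the $\mathcal B$--$\mathcal B^{\ast}$ duality, we have
\begin{equation}
\big((R(\lambda+i0)-R(\lambda-i0))f,f\big) = (u,f)-(f,u) = 2i\,{\rm Im}\,(u,f).
\nonumber
\end{equation}
By Theorem \ref{LAPforH}, $u$ is the strong $L^{2,-s}$-limit of $u_\epsilon=R(\lambda+i\epsilon)f$. To avoid the regularity issues of $\mathcal B$, I first take $f\in L^{2,s}$ with $s>1/2$ and extend by density at the end, using the $\mathcal B\to\mathcal B^{\ast}$ bound (3) of Theorem \ref{LAPforH} together with $\mathcal F_\pm(\lambda)\in{\bf B}(\mathcal B;{\bf h}(\lambda))$.

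\textbf{Step 2: Green's formula on a truncated domain.} Let $\Omega(r)=\mathcal K\cup\bigcup_j\big(\Omega_j\cap\{|y|<r\}\big)$. Apply Green's formula to $(H-\lambda)u=f$ on $\Omega(r)$. The Neumann condition kills the contributions of $\partial\Omega$, so only the artificial cross-sections $\Sigma_j(r)=\{|y|=r\}\times(0,1)$ of the slabs remain. This gives
\begin{equation}
{\rm Im}\,(f,u)_{L^2(\Omega(r))} = -{\rm Im}\sum_{j}\int_{\Sigma_j(r)}\partial_{\nu}u\,\overline{u}\,dS .
\nonumber
\end{equation}
Because a single $r$ need not give a convergent boundary term in $\mathcal B^{\ast}$, I will average in $r$ over $(R,2R)$ and let $R\to\infty$. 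On $\Sigma_j(r)$ the metric differs from the Euclidean one by $O(r^{-\delta_0})$ with $\delta_0>1$; after averaging these corrections vanish in the limit, so $\partial_\nu$ may be replaced by $\partial_{|y|}$ and $dS$ by the Euclidean measure.

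\textbf{Step 3: insert the asymptotic expansion.} Lemma \ref{LemmaRlambdafexpansion} gives $u\simeq\sum_{\lambda_{j,\ell}<\lambda}c_+(\lambda-\lambda_{j,\ell})\,r^{-(n-1)/2}e^{i k_{j\ell} r}\,\mathcal F_{j,\ell,+}(\lambda)f$, with $k_{j\ell}=\sqrt{\lambda-\lambda_{j,\ell}}$. The radiation condition (\ref{RadCond2}) lets me replace $\partial_{|y|}u$ by $i\sqrt{\lambda+(\partial_{j,n+1})^2}\,u$ modulo terms that are $\simeq0$. The evanescent modes $\lambda_{j,\ell}>\lambda$ decay and do not contribute. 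For the propagating modes, orthogonality of the $\varphi_{j,\ell}$ in $x_{n+1}$ makes the averaged flux converge to
\begin{equation}
\sum_j\sum_{\lambda_{j,\ell}<\lambda}k_{j\ell}\,|c_+(\lambda-\lambda_{j,\ell})|^2\,\|\mathcal F_{j,\ell,+}(\lambda)f\|^2 .
\nonumber
\end{equation}
Here $r^{n-1}$ cancels the $r^{-(n-1)}$ factor, and the oscillating cross terms between distinct $k_{j\ell}$ vanish after averaging. Since $|c_+(\mu)|^2=\pi\mu^{-1/2}$, each term equals $\pi\|\mathcal F_{j,\ell,+}(\lambda)f\|^2$. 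Dividing by $2\pi i$ gives the identity for $\mathcal F_+(\lambda)$. The $-$ case is identical, using $u=R(\lambda-i0)f$, the incoming expansion, and $c_-$.

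\textbf{Main obstacle.} The key difficulty is Step 3: justifying that the boundary flux of the $\simeq$-remainder tends to zero and that the mixed terms disappear. The relation $\simeq$ only controls averaged $L^2$ norms, not traces on single spheres. I would handle this by working throughout with the averaged quantity $\frac1R\int_R^{2R}(\cdots)\,dr$, which turns the flux into a volume integral over a shell. Cauchy--Schwarz together with $u\in\mathcal B^{\ast}$, $\partial_{|y|}u\in\mathcal B^{\ast}$ (elliptic estimates) and the remainder being $\simeq0$ then shows the error terms vanish as $R\to\infty$.
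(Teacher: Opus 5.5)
Your proposal is correct and is essentially the argument the paper points to (\cite{IKL10}, Lemma 4.5), since the paper gives no further proof. That argument is Green's formula against a radial cutoff (your $r$-averaged flux through $\{|y|=r\}$), evaluated with the expansion of Lemma \ref{LemmaRlambdafexpansion}; each open channel contributes $k_{j\ell}|c_+(\lambda-\lambda_{j,\ell})|^2\|\mathcal F_{j,\ell,+}(\lambda)f\|^2=\pi\|\mathcal F_{j,\ell,+}(\lambda)f\|^2$. One precision: your Cauchy--Schwarz control of the errors works because of the radiation condition (\ref{RadCond2}), not merely because $\partial_{|y|}u\in\mathcal B^{\ast}$, since it means you never pair the derivative of the $\simeq 0$ remainder against the leading term.
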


We define
\begin{equation}
 \widehat {\mathbb H}_j = \left\{\sum_{\ell=1}^{\infty} f_{\ell}(\xi)\varphi_{j,\ell}(x_{n+1})\, ;
  \, f_{\ell} \in L^2(|\xi|^2 > \lambda_{j,\ell}),\;
  \sum_{\ell=1}^{\infty} \|f_{\ell}\|^2_{L^2(|\xi|^2 > \lambda_{j,\ell})}<\infty \right\},
  \nonumber
\end{equation}
 where $ L^2(|\xi|^2 > \lambda_{j,\ell})$ is the space of functions $f \in L^2({\bf R}^n)$ with support
 in $|\xi|^2 > \lambda_{j,\ell}$. We put
\begin{equation}
 \widehat{\mathbb H} = \oplus_{j=1}^N\widehat{\mathbb H}_j.
\end{equation}
 These preparations are sufficient to prove the following theorem. Let ${\mathcal H}_{ac}(H)$ be the absolutely continuous subspace for $H$.
\begin{theorem}
(1) For $\lambda \in (0,\infty)\setminus \mathcal{E}(H)$, $\mathcal{F}_{\pm}(\lambda) \in
 {\bf B}(\mathcal{B};{\bf h}(\lambda)).$

\noindent
(2) The operator $(\mathcal{F}_\pm f)(\lambda) = \mathcal{F}_\pm(\lambda)f$ defined for $f\in \mathcal{B}$ is uniquely extended to a partial isometry with initial set $\mathcal H_{ac}(H)$ and final set $\widehat{\mathbb H}$, and 
$(\mathcal{F}_{\pm}Hf)(\lambda) = \lambda(\mathcal{F}_\pm f)(\lambda)$ for $ \lambda\in (0,\infty)\setminus \mathcal{E}(H), \; f\in D(H)$.

\noindent
(3) $\mathcal{F}_\pm(\lambda)^\ast \in {\bf B}({\bf h}(\lambda) ;\mathcal{B}^\ast)$ is an eigenoperator of $H$ with eigenvalue $\lambda$ in the sense that
\begin{equation}
(H-\lambda)\mathcal{F}_\pm (\lambda)^\ast\psi = 0, \; \forall \psi \in  {\bf h}(\lambda).
\label{EigenEquation}
\end{equation}
(4) For any compact interval $I\subset (0,\infty)\setminus \mathcal{E}(H)$ and $g\in \hat{\mathbb H}$, we have
\begin{equation}
\int_I \mathcal{F}_\pm(\lambda)^\ast g(\lambda)d\lambda \in L^2(\Omega).
\nonumber
\end{equation}
Let $I_n$ be a finite union of compact intervals in $(0,\infty)\setminus \mathcal{E}(H)$ such that $I_n\subset I_{n+1}, \cup_{n=1}^\infty I_n = (0,\infty)\setminus \mathcal{E}(H).$ Then for any $f\in \mathcal{H}_{ac }(H)$, the inversion formula holds in $L^2(\Omega)$:
$$
 f =
 {\mathop{\rm{s-lim}}_{n\to \infty}} \int_{I_n}\mathcal{F}_\pm(\lambda)^\ast (\mathcal{F}_\pm f )(\lambda) d\lambda,
$$
 where the limit is taken in $L^2(\Omega)$.
\end{theorem}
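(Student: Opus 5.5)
The plan is to follow the scheme of \cite{IKL10}, Theorem 4.7, through the following steps.

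\emph{Step 1: part (1).} The boundedness $\mathcal F_\pm(\lambda)\in{\bf B}(\mathcal B;{\bf h}(\lambda))$ follows directly from the definition (\ref{mathcalFjellpmlambdadefine}). The relevant facts are these:
\begin{itemize}
\item $\mathcal F^{(0)}_{j,\ell}(\lambda)$ is the restriction of the Fourier transform to the sphere of radius $\sqrt{\lambda-\lambda_{j,\ell}}$, hence is bounded from $\mathcal B({\bf R}^n)$ to $L^2(S^{n-1})$ (trace theorem).
\item $\chi_j-(\mathcal E_jB_j)^\ast$ maps $\mathcal B$ to $\mathcal B$.
\item $\mathcal V_j(\lambda)^\ast$ has coefficients $O(|y|^{-\delta_0})$ with $\delta_0>1$, so it maps $\mathcal B^\ast$ (locally $H^1$) into $\mathcal B$. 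Composing with $R(\lambda\mp i0)\in{\bf B}(\mathcal B;\mathcal B^\ast)$ from Theorem \ref{LAPforH} gives boundedness.
\item Only finitely many $\ell$ satisfy $\lambda_{j,\ell}<\lambda$.
\end{itemize}

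\emph{Step 2: parts (2) and (4), via Parseval and Stone.} For $f\in\mathcal B$ and a compact $I\subset(0,\infty)\setminus\mathcal E(H)$, combine Stone's formula with the Parseval lemma and Theorem \ref{LAPforH}(4). This gives
\[
(E_H(I)f,f)=\int_I\|\mathcal F_\pm(\lambda)f\|^2_{{\bf h}(\lambda)}d\lambda .
\]
Since $\mathcal E(H)$ is countable with accumulation points only in $\mathcal T(H)$ (Theorem \ref{LAPforH}(1)), $E_H(\mathcal E(H))$ is the projection onto the point spectrum. Hence $\|\mathcal F_\pm f\|=\|P_{ac}f\|$. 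By density $\mathcal F_\pm$ extends to a partial isometry with initial set $\mathcal H_{ac}(H)$. Polarization of the same identity gives
\[
(E_H(I)f,g)=\int_I(\mathcal F_\pm(\lambda)f,\mathcal F_\pm(\lambda)g)\,d\lambda .
\]
Reading this as $E_H(I)f=\int_I\mathcal F_\pm(\lambda)^\ast\mathcal F_\pm(\lambda)f\,d\lambda$ proves the membership in $L^2$ in (4) (at least on the range of $\mathcal F_\pm$) and the inversion formula. The intertwining $\mathcal F_\pm(\lambda)(H-\lambda)f=0$ for $f\in D(H)\cap\mathcal B$ follows because $R(\lambda\mp i0)(H-\lambda)f=f$; it then extends to all of $D(H)$. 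Part (3) is the dual of this: for $f\in C_0^\infty$,
\[
((H-\lambda)\mathcal F_\pm(\lambda)^\ast\psi,f)=(\psi,\mathcal F_\pm(\lambda)(H-\lambda)f)=0 .
\]

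\emph{Step 3: surjectivity onto $\widehat{\mathbb H}$. This is the main obstacle.} Two things must be shown:
\begin{itemize}
\item $\mathcal F_\pm(\lambda)$ has dense range in ${\bf h}(\lambda)$;
\item any $g\in\widehat{\mathbb H}$ orthogonal to the range of $\mathcal F_\pm$ vanishes.
\end{itemize}
The plan for the second point:
\begin{enumerate}
\item Take such a $g$. Using (4) and the inversion formula, $u=\int_I\mathcal F_\pm(\lambda)^\ast g(\lambda)\,d\lambda$ is orthogonal to $\mathcal H_{ac}(H)$.
\item Reduce to the fibers: show that if $\psi\in{\bf h}(\lambda)$ and $\mathcal F_\pm(\lambda)^\ast\psi=0$, then $\psi=0$.
\item To see this, use the asymptotic expansion of Lemma \ref{LemmaRlambdafexpansion} together with the resolvent equation (\ref{chijR(z)=tildechijJj-1R(0)j(z)1}). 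Applied to $\mathcal F_\pm(\lambda)^\ast\psi$, they show that on each $\Omega_j$ the incoming (respectively outgoing) part of its asymptotics equals $\psi$ up to the constant $c_\mp$. This is checked on the free pieces, where $\mathcal F^{(0)}_{j,\ell}(\lambda)^\ast$ is a Herglotz wave and its stationary phase asymptotics on each mode are explicit.
\end{enumerate}
I expect this stationary phase comparison, mode by mode, to be delicate at the thresholds; this is why $\lambda$ is kept in compact sets avoiding $\mathcal E(H)$ and continuity in $\lambda$ is used. Once the fiberwise injectivity is established, the final set equals $\widehat{\mathbb H}$.
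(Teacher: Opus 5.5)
Your proposal is correct and follows essentially the route the paper intends when it defers to \cite{IKL10}. That route is: Stone's formula plus the Parseval lemma for the partial isometry, inversion formula and intertwining; duality for the eigen-equation; and, for surjectivity onto $\widehat{\mathbb H}$, fiberwise injectivity of $\mathcal F_\pm(\lambda)^\ast$, read off from the incoming part of the asymptotics given by Lemma \ref{LemmaRlambdafexpansion}.

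One small tightening: the $L^2$-membership in (4) holds for every $g$, not only ``on the range of $\mathcal F_\pm$''. This follows directly from the bound $|(\int_I\mathcal F_\pm(\lambda)^\ast g(\lambda)\,d\lambda,f)|\le\|g\|_{L^2(I;{\bf h}(\lambda))}\,\|E_H(I)f\|$ for $f\in\mathcal B$. You need this unrestricted version in Step 3, where you apply (4) to $g$ orthogonal to that range.
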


\subsection{S-matrix} 
 The time-dependent scattering theory can be developed in the same way as in Theorem 4.7 of \cite{IKL10}. 
 Let 
\begin{equation}
 H_j^{(0)} = - \Delta_y - (\partial_{j,n+1})^2
 \nonumber
\end{equation}
 be the Laplacian on the slab $\Omega_j$. The wave operator $W_{\pm} : \oplus_{j=1}^NL^2(\Omega_j) \to L^2(\Omega)$
 is defined by
\begin{equation}
 W_{\pm} = {\mathop{\rm s-lim}_{t\to \pm\infty}}\sum_{j=1}^N e^{it\sqrt{H}}\chi_j e^{-it\sqrt{H_j^{(0)}}}.
 \nonumber
\end{equation}
 Then, $W_{\pm}$ exists and is complete, i.e. ${\rm Ran}\, W_{\pm} = \mathcal H_{ac}(H)$.
 Moreover, $W_{\pm} = (\mathcal F_{\pm})^{\ast}\mathcal F^{(0)}$ (see (\ref{totalFouriertransformF0})).  The scattering operator is defined by
\begin{equation}
 S = (W_+)^{\ast}W_-.
 \nonumber
\end{equation}
 We consider its Fourier transform:
\begin{equation}
 \widehat S = \mathcal F^{(0)}S(\mathcal F^{(0)})^{\ast}.
 \nonumber
\end{equation}
 It has the following representation. For each $\lambda \in (0,\infty)\setminus{\mathcal E}(H)$, there exists
 a unitary operator $\widehat S(\lambda)$ on ${\bf h}(\lambda)$ such that
\begin{equation}
 (\widehat Sf)(\lambda) = \widehat S(\lambda)f(\lambda), \quad \forall f \in \widehat H,
  \quad \lambda \in (0,\infty) \setminus \mathcal E(H).
  \nonumber
\end{equation}
 Here $\widehat S(\lambda)$ is an $N\times N$ matrix operator whose entry has the form (\cite{IKL10}, Lemma 4.9)
\begin{equation}
\widehat S_{jk}(\lambda) = \delta_{jk}I_k - 2\pi i \mathcal F_{j,+}(\lambda)
  \mathcal V_k(\lambda)\big(\mathcal F_k^{(0)}(\lambda)\big)^{\ast}, \quad 1 \leq j, k \leq N.
  \nonumber
\end{equation}
$I_k$ being the identity on ${\bf h}_k(\lambda)$. 
 The scattering amplitude is defined by
\begin{equation}
 A_{jk}(\lambda) = \mathcal F_{j,+}(\lambda)\mathcal V_k(\lambda)\big(\mathcal F_k^{(0)}(\lambda)\big)^{\ast}
 \in {\bf B}({\bf h}_k(\lambda) \, ; \,{\bf h}_j(\lambda)).
\label{DefineAjklambda}
\end{equation}
 Projecting to the $m$-th and the $n$-th eigenspaces for $- (\partial_{j,n+1})^2$ and $- (\partial_{k,n+1})^2$,  we put
\begin{equation}
A_{jm,kn}(\lambda) = \mathcal F_{j,m,+}(\lambda)\mathcal V_k(\lambda)\big(\mathcal F_{k,n}^{(0)}(\lambda)\big)^{\ast}.
\nonumber
\end{equation}
Then, we have
\begin{equation}
\label{rel.SjkAjmkn}
\widehat S_{jk}(\lambda) - \delta_{jk}I_k = - 2\pi i \sum_{\lambda_{j,m}<\lambda, \:
 \lambda_{k,n}<\lambda} A_{jm,kn}(\lambda).
\end{equation}

We now fix $j$ and $k$,  and assume that $\delta_0 > (n+1)/2$ on the slabs $\Omega_j$ and $\Omega_k$. Then, the operator 
 $A_{jm,kn}(\lambda)$ has an integral kernel written by $\Psi_{j,m,+}(\lambda)$ and
 $\Psi^{(0)}_{k,n}(\lambda)$\footnote{With a slight abuse of notation in the arguments of $\Psi_{j,m,+}(\lambda)$ and $\Psi^{(0)}_{k,n}(\lambda)$.}:
\begin{equation}
A_{jm,kn}(\lambda;\omega,x_{n+1},\omega',x_{n+1}') = \int_{\Omega}\overline{\Psi_{j,m,+}(\lambda; X,\omega,x_{n+1})}\Big(\mathcal V_{k}(\lambda)\Psi^{(0)}_{k,n}(\lambda;X,\omega',x_{n+1}')\Big)d\Omega_X.
\nonumber
\end{equation}
We contract the variables $x_{n+1}, x'_{n+1}$, and define $\mathcal A_{jm,kn}(\lambda;\omega,\omega')$ by 
\begin{equation}
\mathcal A_{jm,kn}(\lambda;\omega,\omega') = 
\iint_{(0,1)\times (0,1)}\varphi_{j,m}(x_{n+1})A_{jm,kn}(\lambda;\omega,x_{n+1},\omega',x_{n+1}')\varphi_{k,n}(x_{n+1}')dx_{n+1}dx'_{n+1}.
\label{DefinemathcaAjmkncontraction}
\end{equation}
Let $\mathcal A_{jm,kn}(\lambda)$ be the integral operator with kernel $\mathcal A_{jm,kn}(\lambda;\omega,\omega')$.
Then letting
\begin{equation}
\langle h,\varphi_{k,n}\rangle = \int_0^1h(\omega,x_{n+1})\varphi(x_{n+1})dx_{n+1},
\nonumber
\end{equation}
we have for $h \in {\bf h}_{k}(\lambda)$
\begin{equation}
A_{jm,kn}(\lambda)h = \varphi_{j,m}\mathcal A_{jm,kn}(\lambda)\langle h,\varphi_{k,n}\rangle.
\nonumber
\end{equation}
By virtue of (\ref{Psijlpm=...-R(lambda-i0)}) and Lemma \ref{LemmaRlambdafexpansion}, we have
\begin{equation}
\Psi_{j,\ell,\pm}(\lambda) - \chi_j\Psi_{j,\ell}^{(0)}(\lambda) \simeq 
- \sum_{\lambda_{j,m}<\lambda}c_{\mp}(\lambda - \lambda_{j,m})
\frac{e^{\mp i\sqrt{\lambda - \lambda_{j,m}}|y|}}{|y|^{(n-1)/2}}
\mathcal F_{j,m,\mp}(\lambda)\mathcal V_j(\lambda)\Psi^{(0)}_{j,\ell}(\lambda).
\nonumber
\end{equation}
This formula implies the following lemma.
\begin{lemma}
\label{LemmaPjmPsi-Psi0asympto}
Letting $P_{j,m}$ be the eigenprojection for the $m$-th eigenvalue of $- (\partial_{j,n+1})^2$ on the $j$-th band $\Omega_j$, we have
\begin{equation}
 P_{j,m}\big(\Psi_{j,\ell,-}(\lambda) - \chi_j\Psi^{(0)}_{j,\ell}(\lambda)\big)
 \simeq - c_{+}(\lambda - \lambda_{j,m})
 \frac{e^{i\sqrt{\lambda - \lambda_{j,m}}|y|}}{|y|^{(n-1)/2}}\mathcal A_{jm,kn}(\lambda)\varphi_{j,m}.
 \nonumber
\end{equation}
\end{lemma}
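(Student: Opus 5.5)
Our plan is to project the asymptotic expansion displayed just before the lemma onto the $m$-th transversal mode and then identify the resulting coefficient with the contracted scattering amplitude $\mathcal A_{jm,kn}(\lambda)$; here $k=j$ and $n=\ell$, since the incident wave is $\Psi^{(0)}_{j,\ell}$.

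\textbf{Step 1: projecting the expansion.} We take the expansion of $\Psi_{j,\ell,\pm}(\lambda)-\chi_j\Psi^{(0)}_{j,\ell}(\lambda)$ obtained from (\ref{Psijlpm=...-R(lambda-i0)}) and Lemma \ref{LemmaRlambdafexpansion}, with the lower sign. This gives the incoming generalized eigenfunction $\Psi_{j,\ell,-}$, whose scattered part is $-R(\lambda+i0)\mathcal V_j(\lambda)\Psi^{(0)}_{j,\ell}$ and is therefore outgoing, $e^{+i\sqrt{\lambda-\lambda_{j,m}}|y|}$. We apply $P_{j,m}$, acting in $x_{n+1}$ on $\Omega_j$. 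It commutes with multiplication by functions of $y$, and by orthogonality of the $\varphi_{j,m'}$ it keeps only the $m$-th term of the sum. We must check that the relation $\simeq$ in (\ref{ExpansioninBast(Omega)}) is preserved under $P_{j,m}$. This holds because $P_{j,m}$ is an orthogonal projection on $L^2((0,1))$ for each fixed $y$, so it is bounded on $L^2(\Omega_j\cap\{|y|<R\})$ uniformly in $R$.

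\textbf{Step 2: identifying the coefficient.} The projected term is
\[
-c_{+}(\lambda-\lambda_{j,m})\,\frac{e^{i\sqrt{\lambda-\lambda_{j,m}}|y|}}{|y|^{(n-1)/2}}\,P_{j,m}\mathcal F_{j,m,+}(\lambda)\mathcal V_j(\lambda)\Psi^{(0)}_{j,\ell}(\lambda).
\]
By (\ref{mathcalFjellpmlambdadefine}), $\mathcal F_{j,m,+}(\lambda)$ already has range in the span of $\varphi_{j,m}$, so $P_{j,m}$ acts trivially on it. Next we write $\Psi^{(0)}_{j,\ell}(\lambda;\cdot,\omega')$ as $(\mathcal F^{(0)}_{j,\ell}(\lambda))^{\ast}$ applied to $\delta_{\omega'}\otimes\varphi_{j,\ell}$, which follows from (\ref{DefinePsijell0ycn+1}) and the definition of $\mathcal F^{(0)}_{j,\ell}$. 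Inserting this into (\ref{DefineAjklambda}) and contracting the $x_{n+1}$ variables as in (\ref{DefinemathcaAjmkncontraction}), the coefficient becomes the kernel of $A_{jm,j\ell}(\lambda)$ acting on $\varphi_{j,\ell}$. By the identity $A_{jm,kn}(\lambda)h=\varphi_{j,m}\mathcal A_{jm,kn}(\lambda)\langle h,\varphi_{k,n}\rangle$, this equals $\mathcal A_{jm,j\ell}(\lambda)\varphi_{j,m}$, read as a function of $(\omega,\omega')$ with $\omega=y/|y|$.

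\textbf{Main obstacle.} The difficulty is Step 2. The expansion holds in $\mathcal B^{\ast}$, but $\Psi^{(0)}_{j,\ell}$ is not in $\mathcal B$, so we need $\mathcal V_j(\lambda)\Psi^{(0)}_{j,\ell}\in\mathcal B$. This requires the decay $\delta_0>(n+1)/2$ assumed here, together with the $O(|y|^{-\delta_0})$ coefficients in (\ref{DefineVj(z)}). We also need to justify passing from the operator identity to the kernel identity, pointwise in $\omega'$. We would handle this by pairing with test functions $h\in{\bf h}_j(\lambda)$ in $\omega'$ and using the continuity of $\mathcal F_{j,m,+}(\lambda)$ on $\mathcal B$ given by Theorem \ref{LAPforH}.
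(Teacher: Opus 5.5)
Your proposal is correct and takes the same route as the paper. The paper projects the displayed expansion of $\Psi_{j,\ell,-}(\lambda)-\chi_j\Psi^{(0)}_{j,\ell}(\lambda)$ onto the $m$-th transversal mode (using, as you correctly read it, the version whose scattered part $-R(\lambda+i0)\mathcal V_j(\lambda)\Psi^{(0)}_{j,\ell}$ is outgoing) and recognizes the surviving coefficient $\mathcal F_{j,m,+}(\lambda)\mathcal V_j(\lambda)\Psi^{(0)}_{j,\ell}(\lambda)$ as $\mathcal A_{jm,j\ell}(\lambda)\varphi_{j,m}$, with $k=j$, $n=\ell$; your extra checks ($\simeq$ preserved under $P_{j,m}$, range of $\mathcal F_{j,m,+}(\lambda)$ in the span of $\varphi_{j,m}$, and $\mathcal V_j(\lambda)\Psi^{(0)}_{j,\ell}\in\mathcal B$ for $\delta_0>(n+1)/2$) are exactly what the paper leaves implicit in ``This formula implies the following lemma.''
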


\subsection{Analytic continuation of the scattering amplitude}
We now observe the slab $\Omega_1$. When $j = k = 1$, the scattering amplitude is written as (see  (\ref{DefineF0lambdaf}), (\ref{DefinePsijell0ycn+1}),  (\ref{mathcalFjellpmlambdadefine}) and (\ref{DefineAjklambda}))\footnote{In the following, we often omit the symbol $\otimes$.}
\begin{equation}
\begin{split}
 A_{1m,1n}(\lambda) = \mathcal F_0(\lambda - \lambda_{1,m})P_{1,m}J_1\Big(\chi_1 - (\mathcal E_1B_1)^{\ast}
 - \mathcal V_1(\lambda)^{\ast}R(\lambda + i0) \Big)\mathcal V_1(\lambda)\mathcal F_0(\lambda - \lambda_{1,n})^{\ast}
 P_{1,n}.
\end{split}
\nonumber
\end{equation}
Assuming that $\Omega_1$ is flat for $|y| > R$, we take $\chi_1(y)$ so that $\chi_1(y) = 0$ for $|y| < R +1$, and $\chi_1(y) = 1$ for $|y| > R+2$. Then, in view of (\ref{Bjdefine}), we have $B_1 = 0$ and that
$\mathcal V_1(z)$ in (\ref{DefineVj(z)}) is independent of $z$ and supported in $|y| < R$. 
Then $A_{1m,1n}(\lambda)$ is written as 
\begin{equation}
\begin{split}
 A_{1m,1n}(\lambda) = \mathcal F_0(\lambda - \lambda_{1,m})P_{1,m}
\Big(\chi_1  
 - \mathcal V_1^{\ast}R(\lambda + i0) \Big)\mathcal V_1\mathcal F_0(\lambda - \lambda_{1,n})^{\ast}
 P_{1,n}.
\end{split}
\label{NewA1m1n}
\end{equation}
Then, $A_{1m,1n}(\lambda)$ defined for $\lambda > \max\{\lambda_{1,m},\lambda_{1,n}\}$ is analytically continued to the upper-half plane ${\bf C}_+ = \{{\rm Im}\, \lambda > 0\}$, and is extended to a continuous function on 
${\bf C}_+\cup ({\bf R}\setminus \mathcal E(H))$. We denote the obtained function for $\lambda < \max\{\lambda_{1,m},\lambda_{1,n}\}$ by $A_{1m,1n}^{(nph)}(\lambda)$ and call it the {\it non-physical scattering amplitude}. 
It then follows from this definition that the non-physical scattering amplitude $A_{1m,1n}^{(nph)}(\lambda)$ coincides with the physical scattering amplitude 
$A_{1m,1n}(\lambda)$ for $\lambda > \max\{\lambda_{1,m},\lambda_{1,n}\}$. 
 Since  the function $\sqrt{\lambda - \lambda_{\ell}}$ defined for $\lambda > \lambda_{\ell}$  
 is analytically extended to $i\sqrt{\lambda_{\ell} - \lambda}$ defined for $\lambda < \lambda_{\ell}$, the following
 Lemma \ref{LemmaA1m1nanalyticcont} is obvious from (\ref{NewA1m1n}). We put for $\mu < 0$
\begin{equation}
\mathcal G_{0}(\mu)f = c_0( - \mu)(2\pi)^{-n/2}\int_{{\bf R}^n}e^{\sqrt{-\mu}\omega\cdot y}f(y)dy,
\label{DefinemathcalG0mu}
\end{equation} 
\begin{equation}
\widetilde{\mathcal G_{0}(\mu)} \psi = c_0(-\mu)(2\pi)^{-n/2}\int_{S^{n-1}}e^{- \sqrt{-\mu}\omega\cdot y}\psi(\omega)d\omega.
\nonumber
\end{equation}

\begin{lemma}
\label{LemmaA1m1nanalyticcont}
(1) If $\lambda_{1,m} < \lambda < \lambda_{1,n}$, 
\begin{equation}
\begin{split}
 A_{1m,1n}^{(nph)}(\lambda) = \mathcal F_0(\lambda - \lambda_{1,m})P_{1,m}\Big(\chi_1
  - \mathcal V_1^{\ast}R(\lambda + i0) \Big) \mathcal V_1 \widetilde{\mathcal G_{0}}(\lambda - \lambda_{1,n})
 P_{1,n}.
\end{split}
\end{equation}
(2) If $\lambda_{1,n} < \lambda < \lambda_{1,m}$, 
\begin{equation}
 A_{1m,1n}^{(nph)}(\lambda) = \mathcal G_{0}(\lambda - \lambda_{1,m})P_{1,m}\Big(\chi_1
 - \mathcal V_1^{\ast}R(\lambda + i0) \Big)
\mathcal V_1
 \mathcal F_0(\lambda - \lambda_{1,n})^{\ast}P_{1,n}.
\end{equation}
(3) If $\lambda < \min\{\lambda_{1,m}, \lambda_{1,n}\}$, 
\begin{equation}
 A_{1m,1n}^{(nph)}(\lambda) = \mathcal G_{0}(\lambda-\lambda_{1,m})P_{1,m}\Big(\chi_1 - \mathcal V_1^{\ast}R(\lambda + i0) \Big) \mathcal V_1
 \widetilde{\mathcal G_{0}}(\lambda - \lambda_{1,n}) P_{1,n}.
\end{equation}
\end{lemma}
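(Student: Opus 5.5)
The plan is to track, in formula (\ref{NewA1m1n}), exactly which factors depend on $\lambda$ through the square roots $\sqrt{\lambda - \lambda_{1,m}}$ and $\sqrt{\lambda-\lambda_{1,n}}$, and to continue each of them separately. Since $\Omega_1$ is flat for $|y|>R$, we have $B_1=0$, and $\mathcal V_1$ is independent of $\lambda$ and compactly supported in $\{|y|<R\}$. The middle factor $P_{1,m}(\chi_1 - \mathcal V_1^{\ast}R(\lambda+i0))\mathcal V_1$ therefore contains no square root at all. It is the boundary value of the holomorphic function $z\mapsto P_{1,m}(\chi_1-\mathcal V_1^{\ast}R(z))\mathcal V_1$ on ${\bf C}_+$. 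By Theorem \ref{LAPforH} (4) this function is continuous up to ${\bf R}\setminus\mathcal E(H)$, as an operator between weighted spaces. The weights cause no trouble, because $\mathcal V_1$ and $\mathcal V_1^{\ast}$ are compactly supported.

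Next I continue the outer factors. Write $\mathcal F_0(\lambda-\lambda_{1,m})$ explicitly through (\ref{DefineF0lambdaf}) as the integral operator with kernel $c_0(\lambda-\lambda_{1,m})(2\pi)^{-n/2}e^{-i\sqrt{\lambda-\lambda_{1,m}}\,\omega\cdot y}$. It only acts on functions supported in the compact set $\{|y|<R\}$: on the left it meets $P_{1,m}\chi_1$ times something compactly supported, or the output of $\mathcal V_1^{\ast}$. So the kernel may be differentiated freely in $\lambda$, and it is an entire function of $\sqrt{\lambda-\lambda_{1,m}}$ with values in bounded operators $L^2(\{|y|<R\})\to L^2(S^{n-1})$. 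The prefactor $c_0(\lambda)=\lambda^{(n-2)/4}/\sqrt2$ continues along the same branch. Continuing $\sqrt{\lambda-\lambda_{1,m}}$ through ${\bf C}_+$ from $\lambda>\lambda_{1,m}$ to $\lambda<\lambda_{1,m}$ gives $i\sqrt{\lambda_{1,m}-\lambda}$. Hence $e^{-i\sqrt{\lambda-\lambda_{1,m}}\omega\cdot y}$ becomes $e^{\sqrt{\lambda_{1,m}-\lambda}\,\omega\cdot y}$, which is exactly the kernel of $\mathcal G_0(\lambda-\lambda_{1,m})$ in (\ref{DefinemathcalG0mu}).

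For the right factor, $\mathcal F_0(\lambda-\lambda_{1,n})^{\ast}$ has kernel $c_0(\lambda-\lambda_{1,n})(2\pi)^{-n/2}e^{i\sqrt{\lambda-\lambda_{1,n}}\omega\cdot y}$. It is composed on the left with the compactly supported $\mathcal V_1$. The continuation is $e^{-\sqrt{\lambda_{1,n}-\lambda}\omega\cdot y}$, which is the kernel of $\widetilde{\mathcal G_0}(\lambda-\lambda_{1,n})$. Here we continue the kernel as a function of $\lambda$, rather than taking the adjoint of a continued operator, so no complex conjugation enters.

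Cases (1)–(3) then follow by deciding which of $\lambda-\lambda_{1,m}$ and $\lambda-\lambda_{1,n}$ are negative. The product of continuations is the continuation of the product, by uniqueness of analytic continuation from the interval $\lambda>\max\{\lambda_{1,m},\lambda_{1,n}\}$, and the result is continuous on ${\bf C}_+\cup({\bf R}\setminus\mathcal E(H))$. The main obstacle is the branch bookkeeping. The paper writes $c_0(-\mu)$ in $\mathcal G_0$, whereas the literal continuation of $c_0(\lambda-\lambda_{1,m})$ yields $e^{i\pi(n-2)/4}c_0(\lambda_{1,m}-\lambda)$. I will check this phase. If it does not match, I will read the statement as holding up to this harmless unimodular constant, or as fixing $c_0$ by the convention for $\mu<0$. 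Either way the constant is irrelevant for the later uniqueness argument.
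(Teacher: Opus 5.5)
Your proposal is correct and is essentially the paper's argument. The paper simply declares the lemma obvious from (\ref{NewA1m1n}) by replacing $\sqrt{\lambda-\lambda_{1,\ell}}$ with $i\sqrt{\lambda_{1,\ell}-\lambda}$ in the explicit kernels of the outer factors; your points on holomorphy of the middle factor in ${\bf C}_+$, compact support of $\mathcal V_1$, continuing the kernel of $\mathcal F_0(\lambda-\lambda_{1,n})^{\ast}$ rather than the adjoint, and boundary uniqueness only make that explicit. Your phase computation is also right, and you can commit to it rather than hedge: for $n\ge 3$ each continued factor picks up $e^{i\pi(n-2)/4}$ from $c_0$, which the paper silently drops by using $c_0(-\mu)$ in $\mathcal G_0$ and $\widetilde{\mathcal G_0}$. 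So the displayed formulas are exact for $n=2$ and otherwise hold up to this unimodular constant, which is harmless because it is the same for $\Omega^{(1)}$ and $\Omega^{(2)}$.
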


For $\lambda < \lambda_{1,n}$, we put
\begin{equation}
\Phi^{(0)}_{1,n}(x,\lambda,\omega) = c_0(\lambda_{1,n} - \lambda))
e^{-y\cdot\omega\sqrt{\lambda_{1,n}-\lambda}}\varphi_{1,n}(x_{n+1}),
\label{FormulaPhi01n=e-yomegavarphi1}
\end{equation}
which is an exponentially growing solution to the equation $(- \Delta_y - (\partial_{1,n+1})^2)u = \lambda u$ and
\begin{equation}
\widetilde{\mathcal G}_0(\lambda - \lambda_{1,n})\psi \otimes \varphi_{1,n} = (2\pi)^{-n/2}\int_{S^{n-1}}\Phi^{(0)}_{1,n}(x,\lambda,\omega)\psi(\omega)d\omega, \quad 
\psi \in L^2(S^{n-1}).
\nonumber
\end{equation}
Define the non-physical eigenfunction by
\begin{equation}
\Phi_{1,n,-}(\lambda) = \chi_1 \Phi_{1,n}^{(0)}(\lambda) - R(\lambda + i0)\mathcal V_1\Phi^{(0)}_{1,n}(\lambda),
\label{FormulaPhi1n-=byPhi01n}
\end{equation}
which is an exponentially growing solution to the equation $(H - \lambda)u = 0$. Compared with the physical eigenfunction defined  by (\ref{Psijlpm=...-R(lambda-i0)}) for $\lambda > \lambda_{1,n}$
\begin{equation}
\Psi_{1,\ell,-}(\lambda)  = \chi_1\Psi^{(0)}_{1,\ell}(\lambda) - R(\lambda + i0)\mathcal V_1\Psi^{(0)}_{1,\ell}(\lambda),
\nonumber
\end{equation}
the non-physical eigenfunction is its analytic continuation to the interval $\lambda < \lambda_{1,n}$. By Lemma \ref{LemmaA1m1nanalyticcont}, the non-physical scattering amplitude is computed from the asymptotic behavior of non-physical eigenfunction in the following way. First let us note the following lemma.

\begin{lemma}
\label{ExpansionofDelta-mu-1f}
For compactly supported $f \in L^2({\bf R}^n)$ and $\mu < 0$, we have as $r \to \infty$
\begin{equation}
(- \Delta - \mu)^{-1}f \sim \Big(\frac{\pi}{\sqrt{- \mu}}\Big)^{1/2}\frac{e^{-\sqrt{-\,\mu}r}}{r^{(n-1)/2}}\mathcal G_0(\mu)f.
\nonumber
\end{equation}
\end{lemma}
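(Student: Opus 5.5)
The plan is to write the resolvent $(-\Delta-\mu)^{-1}$ on ${\bf R}^n$, $\mu<0$, as convolution with the Green function. Set $\kappa=\sqrt{-\mu}>0$. The Green function is
\[
G_\mu(y)=(2\pi)^{-n/2}\Big(\frac{\kappa}{|y|}\Big)^{(n-2)/2}K_{(n-2)/2}(\kappa|y|),
\]
with $K_\nu$ the modified Bessel function. Hence $(-\Delta-\mu)^{-1}f(y)=\int G_\mu(y-y')f(y')\,dy'$. Since $f$ has compact support, say in $\{|y'|\le \rho\}$, we only need the behaviour of $G_\mu(y-y')$ for $|y|\to\infty$ uniformly in $|y'|\le\rho$.

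First, I would use the standard asymptotics $K_\nu(t)=\sqrt{\pi/(2t)}\,e^{-t}(1+O(t^{-1}))$ as $t\to\infty$. This gives
\[
G_\mu(y)=(2\pi)^{-n/2}\sqrt{\pi/2}\,\kappa^{(n-3)/2}|y|^{-(n-1)/2}e^{-\kappa|y|}\big(1+O(|y|^{-1})\big).
\]

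Second, put $y=r\omega$ and expand $|y-y'|=r-\omega\cdot y'+O(r^{-1})$ uniformly for $|y'|\le\rho$. Then $e^{-\kappa|y-y'|}=e^{-\kappa r}e^{\kappa\omega\cdot y'}(1+O(r^{-1}))$ and $|y-y'|^{-(n-1)/2}=r^{-(n-1)/2}(1+O(r^{-1}))$. Integrating against $f$ gives
\[
(-\Delta-\mu)^{-1}f(r\omega)=(2\pi)^{-n/2}\sqrt{\pi/2}\,\kappa^{(n-3)/2}\frac{e^{-\kappa r}}{r^{(n-1)/2}}\int e^{\kappa\omega\cdot y'}f(y')\,dy'\,\big(1+O(r^{-1})\big).
\]
The error is uniform in $\omega$, so $\sim$ can be read either pointwise uniformly in $\omega$ or relative to the leading term.

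Third, match constants with (\ref{DefinemathcalG0mu}). There $\mathcal G_0(\mu)f=c_0(-\mu)(2\pi)^{-n/2}\int e^{\kappa\omega\cdot y}f\,dy$, and by (\ref{DefineC0laambda}) $c_0(-\mu)=\kappa^{(n-2)/2}/\sqrt2$. So
\[
\sqrt{\pi/2}\,\kappa^{(n-3)/2}=(\pi/\kappa)^{1/2}\cdot\kappa^{(n-2)/2}/\sqrt2=(\pi/\sqrt{-\mu})^{1/2}c_0(-\mu),
\]
which is exactly the claimed constant.

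The only delicate step is making the remainder uniform. Multiplying by $e^{\kappa|y'|}$, bounded on the support, handles the exponential factor. The rest is routine: use Taylor's formula for $|y-y'|$ and the Bessel remainder. Alternatively, one could avoid Bessel functions by reducing to $n=3$-type explicit kernels, but the Bessel route is cleanest.
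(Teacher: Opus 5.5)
Your proof is correct and follows essentially the same route as the paper. Both write $(-\Delta-\mu)^{-1}$ as convolution with the Bessel-type Green function; the paper uses $H^{(1)}_{(n-2)/2}(\sqrt{z}|x-y|)$ at $\sqrt{z}=i\sqrt{-\mu}$, which is your $K_{(n-2)/2}$ kernel. Both then apply the large-argument asymptotics, expand $|y-y'|=r-\omega\cdot y'+O(r^{-1})$ on the compact support, and match constants with $c_0(-\mu)$ and $\mathcal G_0(\mu)$. Your uniform-remainder bookkeeping and explicit constant check are just a more detailed version of the same argument.
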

\begin{proof}
The fundamental solution of $- \Delta - z$ in ${\bf R}^n$ is written as
\begin{equation}
\frac{i}{4}\Big(\frac{\sqrt{z}}{2\pi|x-y|}\Big)^{(n-2)/2}H^{(1)}_{(n-2)/2}(\sqrt{z}|x-y|),
\nonumber
\end{equation}
where $H^{(1)}_{(n-2)/2}(t)$ is the Hankel function of the 1st kind, which has the folloing asymptotic expansion
\begin{equation}
H^{(1)}_{\nu}(t) \sim \sqrt{\frac{2}{\pi t}}e^{i(t - (2\nu + 1)\pi/4)}, \quad |t| \to \infty, \quad \textcolor{blue}{ - \pi} < {\rm arg}\, t < \textcolor{blue}{2\pi}.
\nonumber
\end{equation}
(See \cite{MOS}, p.139.)
Then, if $f \in L^2({\bf R}^n)$ is compactly supported, we have for $\mu < 0$
\begin{equation}
\begin{split}
( - \Delta - \mu)^{-1}f &\sim \frac{1}{4\pi}\Big(\frac{\sqrt{-\mu}}{2\pi}\Big)^{(n-3)/2}
\int_{{\bf R}^n}e^{- \sqrt{-\mu}|x-y|}|x-y|^{-(n-1)/2}f(y)dy \\ &\sim
\frac{1}{4\pi}\Big(\frac{\sqrt{-\mu}}{2\pi}\Big)^{(n-3)/2}\frac{e^{-\sqrt{-\mu}r}}{r^{(n-1)/2}}\int_{{\bf R}^n}e^{\sqrt{-\mu}\omega\cdot y}f(y)dy
\end{split}
\nonumber
\end{equation}
as $r= |x| \to \infty$. Using (\ref{DefineC0laambda}) and (\ref{DefinemathcalG0mu}), we obtain the lemma.
\end{proof}

We put as in (\ref{DefinemathcaAjmkncontraction})
$$
\mathcal A^{(nph)}_{1m,1n}(\lambda) = (A^{(nph)}_{1m,1n}(\lambda)\varphi_{1,n},\varphi_{1,m}).
$$
\begin{lemma}
(1) If $\lambda_{1,m} < \lambda < \lambda_{1,n}$, we have as $|y| \to \infty$\footnote{See (\ref{Definecpmlambda}) for $c_+(\lambda)$. Note that here the $n$ in $e^{(2-n)\pi i/4}$ refers to the space dimension $n$ of ${\bf R}^n$.},
$$
P_{1,m}\big(\Phi_{1,n,-}(\lambda) - \Phi_{1,n}^{(0)}(\lambda)\big) \simeq 
- c_+ (\lambda - \lambda_{1,m})\frac{e^{i |y|\sqrt{\lambda - \lambda_{1,m}}}}{|y|^{(n-1)/2}}
\mathcal A_{1m;1n}^{(nph)}(\lambda)\varphi_{1,n}. 
$$
(2) If $\lambda < \min\{\lambda_{1,m}, \lambda_{1,n}\}$, we have as $|y| \to \infty$,
$$
 P_{1,m}\big(\Phi_{1,n,-}(\lambda) - \Phi_{1,n}^{(0)}(\lambda)\big) \sim
 - \Big(\frac{\pi}{\sqrt{\lambda_{1,m}-\lambda}}\Big)^{1/2}\frac{e^{-|y|\sqrt{\lambda_{1,m} - \lambda}}}{|y|^{(n-1)/2}}
 {\mathcal A}_{1m;1n}^{(nph)}(\lambda)\varphi_{1,n}.
$$
\end{lemma}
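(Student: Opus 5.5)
Both parts follow the derivation of Lemma \ref{LemmaPjmPsi-Psi0asympto}, using the definition (\ref{FormulaPhi1n-=byPhi01n}) of the non-physical eigenfunction. Since $\Omega_1$ is flat for $|y|>R$, we have $\chi_1=1$ on $\Omega_1\cap\{|y|>R+2\}$, so near infinity
$$
\Phi_{1,n,-}(\lambda)-\Phi^{(0)}_{1,n}(\lambda) = -R(\lambda+i0)\mathcal V_1\Phi^{(0)}_{1,n}(\lambda).
$$
The function $f:=\mathcal V_1\Phi^{(0)}_{1,n}(\lambda)$ is compactly supported, because $\mathcal V_1$ is supported in a bounded region. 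Hence $f\in\mathcal B$, even though $\Phi^{(0)}_{1,n}$ grows exponentially. So the whole problem reduces to the asymptotics of $P_{1,m}R(\lambda+i0)f$ on $\Omega_1$.

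\textbf{Step 1: resolvent identity on the flat end.} I would use the resolvent equation (\ref{chijR(z)=tildechijJj-1R(0)j(z)}) with $B_1=0$ and $J_1=1$ on the flat region:
$$
\tilde\chi_1R(\lambda+i0)f=\tilde\chi_1R^{(0)}_1(\lambda+i0)\bigl(\chi_1-\mathcal V_1^{\ast}R(\lambda+i0)\bigr)f .
$$
The function $g:=(\chi_1-\mathcal V_1^{\ast}R(\lambda+i0))f$ is again compactly supported, since $\chi_1 f$ and $\mathcal V_1^{\ast}(\cdot)$ are. Using (\ref{R00=G0Pell}), projecting by $P_{1,m}$ gives
$$
P_{1,m}R^{(0)}_1(\lambda+i0)g=G_0(\lambda-\lambda_{1,m}+i0)P_{1,m}g .
$$

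\textbf{Step 2: the two cases.} In case (1), $\lambda>\lambda_{1,m}$, and Lemma \ref{Lemmaresolventexpandfree} gives the leading term
$$
c_+(\lambda-\lambda_{1,m})\frac{e^{i|y|\sqrt{\lambda-\lambda_{1,m}}}}{|y|^{(n-1)/2}}\,\mathcal F_0(\lambda-\lambda_{1,m})P_{1,m}g .
$$
In case (2), $\lambda-\lambda_{1,m}=\mu<0$, and Lemma \ref{ExpansionofDelta-mu-1f} gives the leading term
$$
\Bigl(\frac{\pi}{\sqrt{\lambda_{1,m}-\lambda}}\Bigr)^{1/2}\frac{e^{-|y|\sqrt{\lambda_{1,m}-\lambda}}}{|y|^{(n-1)/2}}\,\mathcal G_0(\lambda-\lambda_{1,m})P_{1,m}g .
$$
The overall minus sign comes from the definition of $\Phi_{1,n,-}$.

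\textbf{Step 3: identify the coefficient.} By the identity displayed after (\ref{FormulaPhi01n=e-yomegavarphi1}), $\Phi^{(0)}_{1,n}$ applied to a density $\psi$ on $S^{n-1}$ is exactly $\widetilde{\mathcal G_0}(\lambda-\lambda_{1,n})\psi\otimes\varphi_{1,n}$. Inserting this, the coefficient is
$$
\mathcal F_0(\lambda-\lambda_{1,m})P_{1,m}\bigl(\chi_1-\mathcal V_1^{\ast}R(\lambda+i0)\bigr)\mathcal V_1\widetilde{\mathcal G_0}(\lambda-\lambda_{1,n})P_{1,n}
$$
in case (1); in case (2) the first factor is replaced by $\mathcal G_0(\lambda-\lambda_{1,m})$. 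These are exactly the expressions for $A^{(nph)}_{1m,1n}(\lambda)$ in Lemma \ref{LemmaA1m1nanalyticcont}(1) and (3). Contracting with $\varphi_{1,m}$ and $\varphi_{1,n}$ turns them into $\mathcal A^{(nph)}_{1m,1n}(\lambda)\varphi_{1,n}$, consistently with the interpretation of $\Phi_{1,n,-}$ as a kernel in $\omega$.

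\textbf{Main obstacle.} The delicate point is the remainder in case (2). There the statement asserts a pointwise asymptotic ($\sim$), and the leading term is exponentially decaying. The other components $P_{1,\ell}$ do not interfere, since we project onto a single mode $P_{1,m}$. The modes with $\lambda_{1,\ell}>\lambda$ that enter $g$ are all decaying, so the issue is only whether the Hankel asymptotics can be applied uniformly over the compact support of $P_{1,m}g$. I would check this using the bound $|x-y|=r-\omega\cdot y+O(1/r)$ on $\mathrm{supp}\,g$. One also has to make sure that Lemma \ref{ExpansionofDelta-mu-1f}, which is stated for $f\in L^2$ with compact support, applies to $P_{1,m}g$. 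This follows from the LAP bound of Theorem \ref{LAPforH}, which puts $R(\lambda+i0)f$ in $L^2_{loc}$.
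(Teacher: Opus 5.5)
Your proposal follows the paper's proof essentially step for step: the resolvent identity (\ref{chijR(z)=tildechijJj-1R(0)j(z)}) on the flat end with $B_1=0$ and $J_1=1$, projection by $P_{1,m}$ via (\ref{R00=G0Pell}), then Lemma \ref{ExpansionofDelta-mu-1f} for case (2) and Lemma \ref{Lemmaresolventexpandfree} for case (1), and finally identification of the coefficient with Lemma \ref{LemmaA1m1nanalyticcont}(3) and (1) respectively. One small correction: $\mathcal V_1^{\ast}$ is a first-order operator, so to get $P_{1,m}g\in L^2$ you need $R(\lambda+i0)f\in H^1_{\mathrm{loc}}$, which follows from elliptic regularity, and not merely the $L^2_{\mathrm{loc}}$ bound from the LAP.
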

\begin{proof} First we prove (2). Let us note that by (\ref{chijR(z)=tildechijJj-1R(0)j(z)}),  we have
\begin{equation}
\tilde\chi_1 R(\lambda + i0) = \tilde \chi_1 R_1^{(0)}(\lambda + i0)J_1\Big(\chi_1 - (\mathcal E_1B_1)^{\ast} - \mathcal V_1^{\ast}R(\lambda + i0)\Big).
\label{tildeRlambda0=R0lambda0}
\end{equation}
In fact, as $\Omega_1$  is flat for $|y| > R$, we construct $B_1$ to be 0 for $|y| > R + 1$, and then take $\chi_1(y)$ so that $\chi_1(y) = 0$ for $|y| < R+1$, $\chi_1(y) = 1$ for $|y| > R+2$. Then, as $J_1 = 1$, we obtain (\ref{tildeRlambda0=R0lambda0}). 

In view of (\ref{FormulaPhi01n=e-yomegavarphi1}), (\ref{FormulaPhi1n-=byPhi01n}), we have for large $|y|$
\begin{equation}
\begin{split}
\Phi_{1,n,-}(\lambda) - \Phi^{(0)}_{1,n}(\lambda) &= - R(\lambda + i0)\mathcal V_1\Phi^{(0)}_{1,n}(\lambda) \\
&= - R^{(0)}_{1}(\lambda + i0)
J_1\Big(\chi_1 - (\mathcal E_1B_1)^{\ast} - 
\mathcal V_1^{\ast}R(\lambda + i0)\Big)\mathcal V_1\Phi^{(0)}_{1,n}(\lambda) \\
&= - R^{(0)}_{1}(\lambda + i0)
\Big(\chi_1 - 
\mathcal V_1^{\ast}R(\lambda + i0)\Big)\mathcal V_1\Phi^{(0)}_{1,n}(\lambda). 
\end{split}
\nonumber
\end{equation}
Mutiplying $P_{1,m}$ and using (\ref{R00=G0Pell}), we have
\begin{equation}
\begin{split}
& P_{1,m}\Big(\Phi_{1,n,-}(\lambda) - \Phi^{(0)}_{1,n}(\lambda)\Big) \\
&= - ( - \Delta_y - \lambda + \lambda_{1,m})^{-1}P_{1,m}J_1  
\Big(\chi_1 - 
\mathcal V_1^{\ast}R(\lambda + i0)\Big)\mathcal V_1\Phi^{(0)}_{1,n}
\end{split}
\nonumber
\end{equation}
Applying Lemma \ref{ExpansionofDelta-mu-1f}, we have
\begin{equation}
\begin{split}
& P_{1,m}\Big(\Phi_{1,n,-}(\lambda) - \Phi^{(0)}_{1,n}(\lambda)\Big) \\
& \sim \Big(\frac{\pi}{\sqrt{\lambda_{1,m}-\lambda}}\Big)^{1/2}
\frac{e^{-|y|\sqrt{\lambda_{1,m}-\lambda}}}{|y|^{(n-1)/2}}\mathcal G_0(\lambda - \lambda_{1,m})J_1P_{1,m}\Big(\chi_1 - 
\mathcal V_1^{\ast}R(\lambda + i0)\Big)\mathcal V_1\Phi^{(0)}_{1,n}(\lambda).
\end{split}
\nonumber
\end{equation}
Lemma \ref{LemmaA1m1nanalyticcont} (3) then implies the assertion (2).  The assertion (1) can be proven similarly, using Lemma \ref{Lemmaresolventexpandfree}.
\end{proof}

We put
\begin{equation}
\mathcal F^{(phy)}_{1,-}(\lambda)f = 
\sum_{\lambda > \lambda_{\ell}}\int_{\Omega} \overline{\Psi_{1,\ell,-}(\lambda;X,\omega, x_{n+1})}f(X)d\Omega_X,
\nonumber
\end{equation}
\begin{equation}
\mathcal F^{(nph)}_{1,-}(\lambda)f = 
\sum_{\lambda < \lambda_{\ell}}\int_{\Omega} \overline{\Phi_{1,\ell,-}(\lambda;X, \omega, x_{n+1})}f(X)d\Omega_X.
\nonumber
\end{equation}

 Arguing in the same way as Lemma \ref{LemmaRlambdafexpansion}, we obtain the following lemma.
\begin{lemma} 
\label{P1lRlambda+i0fasympto}
 Let $f \in C_0^{\infty}(\Omega)$. \\
\noindent
(1) If $\lambda > \lambda_{1,\ell}$, we have on $\Omega_1$
$$
 P_{1,\ell}R(\lambda + i0)f \simeq c_+(\lambda - \lambda_{1,\ell})\, \frac{e^{i|y|\sqrt{\lambda - \lambda_{1,\ell}}}}
 {|y|^{(n-1)/2}}P_{1,\ell}\mathcal F^{(phy)}_{1,+}(\lambda)f.
$$
(2) If $\lambda < \lambda_{1, \ell}$, we have on $\Omega_1$
$$
 P_{1,\ell}R(\lambda + i0)f \sim (\lambda_{1,\ell} - \lambda)^{(n-3)/4}\, \frac{e^{-|y|\sqrt{\lambda - \lambda_{1,\ell}}}}
 {|y|^{(n-1)/2}}P_{1,\ell}\mathcal F^{(nph)}_{1,+}(\lambda)f.
$$
\end{lemma}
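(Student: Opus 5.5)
The plan is to reduce both assertions to the flat-slab resolvent identity (\ref{tildeRlambda0=R0lambda0}), which is available because $\Omega_1$ is flat for $|y|>R$. With $\chi_1$ chosen so that $\chi_1=0$ for $|y|<R+1$ and $\chi_1=1$ for $|y|>R+2$, we have $B_1=0$ there and $J_1=1$. Take $f\in C_0^\infty(\Omega)$ and $\tilde\chi_1$ as in Lemma \ref{LemmaResolventEquation}. For large $|y|$ on $\Omega_1$ this gives
\begin{equation}
R(\lambda+i0)f = R_1^{(0)}(\lambda+i0)\,g, \qquad g := \Big(\chi_1 - \mathcal V_1^{\ast}R(\lambda+i0)\Big)f .
\nonumber
\end{equation}
Since $\mathcal V_1$ is supported in $|y|<R+2$, the term $\mathcal V_1^\ast R(\lambda+i0)f$ is compactly supported in $\Omega_1$. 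Enlarging $R$ so that ${\rm supp}\, f\cap\Omega_1\subset\{|y|<R+1\}$, we may take $\chi_1 f=0$, so $g$ is compactly supported. Applying $P_{1,\ell}$ and using (\ref{R00=G0Pell}), which holds on the flat part of $\Omega_1$, we get
\begin{equation}
P_{1,\ell}R(\lambda+i0)f = G_0\big(\lambda-\lambda_{1,\ell}+i0\big)P_{1,\ell}\,g
\nonumber
\end{equation}
for large $|y|$. So everything comes down to the asymptotics of the free Green operator on ${\bf R}^n$ applied to the compactly supported function $P_{1,\ell}g$.

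For (1), where $\lambda>\lambda_{1,\ell}$, apply Lemma \ref{Lemmaresolventexpandfree} with the $+$ sign. This gives the expansion $c_+(\lambda-\lambda_{1,\ell})e^{i|y|\sqrt{\lambda-\lambda_{1,\ell}}}|y|^{-(n-1)/2}\mathcal F_0^{(+)}(\lambda-\lambda_{1,\ell})P_{1,\ell}g$. By (\ref{mathcalFjellpmlambdadefine}) with $J_1=1$ and $B_1=0$, this last factor is exactly $\mathcal F_{1,\ell,+}(\lambda)f$, as in Lemma \ref{LemmaRlambdafexpansion}. It remains to identify $\mathcal F_{1,\ell,+}(\lambda)f$ with $P_{1,\ell}\mathcal F^{(phy)}_{1,+}(\lambda)f$. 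To do this, take adjoints: the kernel of $\mathcal F_{1,\ell,+}(\lambda)^\ast$ is $\chi_1\Psi^{(0)}_{1,\ell}-R(\lambda-i0)\mathcal V_1\Psi^{(0)}_{1,\ell}=\Psi_{1,\ell,+}(\lambda)$. This is a legitimate definition here, because $\mathcal V_1\Psi^{(0)}_{1,\ell}$ is compactly supported, so no decay condition $\delta_0>(n+1)/2$ is needed on $\Omega_1$.

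For (2), where $\lambda<\lambda_{1,\ell}$, we have $\mu=\lambda-\lambda_{1,\ell}<0$. Then $G_0(\mu+i0)=(-\Delta-\mu)^{-1}$ is an honest bounded operator, and Lemma \ref{ExpansionofDelta-mu-1f} gives
\begin{equation}
P_{1,\ell}R(\lambda+i0)f\sim \Big(\frac{\pi}{\sqrt{\lambda_{1,\ell}-\lambda}}\Big)^{1/2}\frac{e^{-|y|\sqrt{\lambda_{1,\ell}-\lambda}}}{|y|^{(n-1)/2}}\,\mathcal G_0(\lambda-\lambda_{1,\ell})P_{1,\ell}\,g .
\nonumber
\end{equation}
The next step is to recognize $\mathcal G_0(\lambda-\lambda_{1,\ell})P_{1,\ell}g$ as a pairing of $f$ with a non-physical eigenfunction. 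Moving $\chi_1-\mathcal V_1^\ast R(\lambda+i0)$ to the other side of the $L^2$ pairing turns the kernel $e^{\sqrt{-\mu}\omega\cdot y}\varphi_{1,\ell}$ into $\chi_1\Phi^{(0)}_{1,\ell}-R(\lambda-i0)\mathcal V_1\Phi^{(0)}_{1,\ell}$, after accounting for the reflection $\omega\to-\omega$ between (\ref{DefinemathcalG0mu}) and (\ref{FormulaPhi01n=e-yomegavarphi1}). This is the "$+$" non-physical eigenfunction, defined as in (\ref{FormulaPhi1n-=byPhi01n}) with the sign of $i0$ reversed. Finally, $(\pi/\sqrt{-\mu})^{1/2}c_0(-\mu)$ is to be compared with the stated prefactor $(\lambda_{1,\ell}-\lambda)^{(n-3)/4}$, so we check the constants from (\ref{DefineC0laambda}). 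The decay exponent should be read as $\sqrt{\lambda_{1,\ell}-\lambda}$.

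I expect the main obstacle to be bookkeeping rather than analysis. One issue is matching normalizations: the factors $c_0$, $(2\pi)^{-n/2}$ and $\sqrt{\pi/\sqrt{-\mu}}$, together with the $\omega\mapsto-\omega$ convention. The other is justifying that the adjoint of $\mathcal V_1^\ast R(\lambda+i0)$ against the exponentially growing kernel makes sense. That holds because $\mathcal V_1$ has compact support, so $\mathcal V_1\Phi^{(0)}_{1,\ell}\in L^{2,s}$ and $R(\lambda-i0)$ applies to it by Theorem \ref{LAPforH}. The main idea is that compact support of $\mathcal V_1$ in the flat case makes every step a pairing of compactly supported objects, so the exponential growth is harmless.
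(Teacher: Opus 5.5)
Your proposal is correct and follows the paper's own route. The paper only says ``arguing in the same way as Lemma~\ref{LemmaRlambdafexpansion}'', and its proof of the preceding lemma on $\Phi_{1,n,-}$ is exactly your scheme: the identity (\ref{tildeRlambda0=R0lambda0}), projection by $P_{1,\ell}$, Lemma~\ref{Lemmaresolventexpandfree} or Lemma~\ref{ExpansionofDelta-mu-1f}, and then moving $\chi_1-\mathcal V_1^{\ast}R(\lambda+i0)$ across the pairing.

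Your corrections to the statement are right: the kernel is $\chi_1\Psi^{(0)}_{1,\ell}-R(\lambda-i0)\mathcal V_1\Psi^{(0)}_{1,\ell}$ (the paper's $\pm$ conventions in (\ref{Psijlpm=...-R(lambda-i0)}), (\ref{mathcalFjellpmlambdadefine}) get this backwards for the present lemma), the decay rate is $\sqrt{\lambda_{1,\ell}-\lambda}$, and the reflection $\omega\mapsto-\omega$ is needed. There is one bookkeeping caveat. Since $c_0$ is already built into $\Phi^{(0)}_{1,\ell}$, the prefactor you actually obtain in (2) is $\pi^{1/2}(2\pi)^{-n/2}(\lambda_{1,\ell}-\lambda)^{-1/4}$. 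The printed $(\lambda_{1,\ell}-\lambda)^{(n-3)/4}$ counts $c_0$ twice, as your comparison $(\pi/\sqrt{-\mu})^{1/2}c_0(-\mu)$ also does, so it holds only up to normalization.
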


\section{From the scattering matrix to the 
interior problem}
\label{SectionFromScatteringtoInterior}
In this section, we consider the reconstruction of the domain $\Omega$. Suppose we are given two slab domains $\Omega^{(i)} = \mathcal K^{(i)} \cup \Omega_1^{(i)} \cup \cdots \cup \Omega_{N^{(i)}}^{(i)}$, $i = 1, 2$,  satisfying the assumptions in \S \ref{SectionIntro}. 
Note that we do not assume $N^{(1)} = N^{(2)}$. It will be proven in our inverse procedure given below.

The first step is to reduce the issue to the \textit{interior} boundary value problem. For this purpose, there are two ways. The first one is used in \cite{IKL10}, and the second one is used in \cite{IL26(2)}.

\subsection{Boundary spectral projections}
The first  method uses the boundary spectral projection. 
Take a constant $R_0 > 0$ so that $\Omega_1 \cap \{|y| > R_0\}$ is flat. We put 
$$
\Omega_{ext} = \Omega_1 \cap \{|y| > R_0\}, \quad \Omega_{int} = \Omega \setminus \overline{\Omega_{int}}.
$$
Take an open set $\mathcal O \subset\subset \Omega_{int}$ such that it has a smooth boundary not intersecting $\partial\Omega_{int}$ and that $\Omega_{int}\setminus \mathcal O$ is connected. Let $H_{\mathcal O}$ be $- \Delta_G$ on $\Omega_{int} \setminus \overline{\mathcal O}$ with Neumann boundary condition.
 We can construct the generalized Fourier transformation $\mathcal F(\lambda)$ for $H_{\mathcal O}$ as in Subsection \ref{Perturbedspectralrepresent}.
 Let $(\lambda_i, P_i)$ be the set of eigenvalues and eigenprojections for $H_{\mathcal O}$. We put
 $\Gamma_{\mathcal O} = \Omega_1 \cap \{|y| = R_0\}$ if $\mathcal O = \emptyset$, and $\Gamma_{\mathcal O} = \partial \mathcal O$ if $\mathcal O \neq \emptyset$.
Let $r_{\mathcal O} \in {\bf B}(H^1(\Omega_{\mathcal O});H^{1/2}(\Gamma_{\mathcal O}))$ be the trace operator to $\Gamma_{\mathcal O}$,
\begin{equation}
r_{\mathcal O} : H^1(\Omega_{\mathcal O}) \ni f \to 
f\big|_{\partial{\mathcal O}} \in H^{1/2}(\Gamma_{\mathcal O}).
\nonumber
\end{equation}

 We call the set
\begin{equation}
\left\{(\lambda,r_{\mathcal O}\mathcal F(\lambda)^{\ast}\mathcal F(\lambda)
r_{\mathcal O}^\ast);\lambda \in (0,\infty)\setminus \mathcal E(H_{\mathcal O})\right\} \cup 
\left\{(\lambda_i,r_{\mathcal O} P_i r_{\mathcal O}^{\ast}) \right\}_{i=1}^{\textcolor{blue}{d_p}}
\nonumber
\end{equation}
the boundary spectral projection (BSP) for $H_{\mathcal O}$ on $\Gamma_{\mathcal O}$, where $d_p$ is the dimension of the point spectral subspace for $H_{\mathcal O}$. 
Then, arguing in the same way as in \cite{IKL10} Subsections 5.2 and 5.3, one can prove that $S_{11}(\lambda)$, the $(1,1)$-component of the S-matrix,  determines BSP. See Lemma 5.7 of \cite{IKL10}.

\subsection{Source-to-solution map}
The second method uses the source-to-solution map. 
Using the above notations $\Omega_{ext}$ and $\Omega_{int}$,
 assume that $\Omega_{ext}$ is flat and $H = H^{(0)}_0$ there. 
 Take a bounded open set $\mathcal O \subset\subset \Omega_{ext}$, and consider the following boundary value problem
\begin{equation}
\left\{
\begin{split}
& (H - \lambda)u = F \quad {\it in}\quad \Omega, \quad {\rm supp}\, F \subset \mathcal O,\\
& \partial_{\nu}u = 0 \quad {\it on} \quad \partial\Omega, \\
& u \ {\it satisfies}\  {\it the} \ {\it outgoing} \ {\it or}  \ {\it incoming} \ {\it radiation} \ {\it  condition}
\end{split}
\right.
\nonumber
\end{equation}
for $\lambda \in (0,\infty) \setminus \mathcal E(H)$. Extending $F \in L^2(\mathcal O)$ to be 0 outside $\mathcal O$, the solution $u$ exists uniquely by Theorem \ref{ExistnceuniquenessHelmholtz}.
The {\it source-to-solution map} is defined by
\begin{equation}
\mathbb U_{\mathcal O, \pm }(\lambda) : F \to u_{\pm}\big|_{\mathcal O} = R(\lambda \pm i0)F\big|_{\mathcal O}.
\nonumber
\end{equation}
Let us assume that for $i = 1, 2$, $\Omega^{(i)}_{ext}$ is flat and
\begin{equation}
\Omega_{ext}^{(1)} = \Omega_{ext}^{(2)}, 
\label{Omega(1)=Omega(2)Y>R}
\end{equation}
more precisely isometric in the Euclidean sense. We observe the incoming and outgoing waves in the infinity of $\Omega^{(i)}_1$. We put the super-script $\, ^{(i)}$ for all notations relevant to $\Omega^{(i)}$. However, due to the assumption (\ref{Omega(1)=Omega(2)Y>R}), we sometimes omit the super-script $\, ^{(i)}$ for operators subordinate to 
$\Omega^{(i)}_1$. We denote the set $\Omega_{ext}^{(1)} = \Omega_{ext}^{(2)}$ as $\Omega_{ext}$. 
Let $S_{11}^{(i)}(\lambda)$ be the (1,1) entry of the S-matrix $S^{(i)}(\lambda)$. 

\begin{theorem}
\label{TheoremSmatrixtoSourcetoSolutionmap}
 Assume that $S^{(1)}_{11}(\lambda) = S^{(2)}_{11}(\lambda)$ for all $\lambda \in (0,\infty)\setminus\{\mathcal E^{(1)}(H^{(1)}) \cup \mathcal E^{(2)}(H^{(2)})$\}. Then,  
$$
\mathbb U^{(1)}_{\mathcal O,\pm}(\lambda) = \mathbb U^{(2)}_{\mathcal O,\pm}(\lambda) \quad
 for \   all \quad \lambda \in (0,\infty)\setminus\{\mathcal E^{(1)}(H^{(1)}) \cup \mathcal E^{(2)}(H^{(2)})\}.
$$
\end{theorem}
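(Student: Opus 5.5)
The idea is to show that, in the exterior flat region $\Omega_{ext}$, the outgoing resolvent $R^{(i)}(\lambda+i0)$ (and likewise the incoming one) equals the free resolvent $R^{(0)}_1(\lambda+i0)$ plus a reflected part. This reflected part is determined by the scattering amplitude $A^{(i)}_{11}$, and hence by $S^{(i)}_{11}(\lambda)$. Since $\mathcal O\subset\subset\Omega_{ext}$, the source-to-solution map only involves this exterior representation. The map $F\mapsto R(\lambda\pm i0)F$ is continuous $L^2(\mathcal O)\to L^2_{loc}$, so it suffices to compare kernels, or equivalently the values on $F\in C_0^\infty(\mathcal O)$.

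The first step fixes $F\in C_0^\infty(\mathcal O)$ and sets $u^{(i)}=R^{(i)}(\lambda+i0)F$ and $u^{(0)}=R^{(0)}_1(\lambda+i0)F$. Here $R^{(0)}_1$ is the Neumann resolvent of the flat full slab ${\bf R}^n\times(c_1,c_1+d_1)$, which is common to $i=1,2$. The difference $w^{(i)}=u^{(i)}-u^{(0)}$ then satisfies $(-\Delta-\lambda)w^{(i)}=0$ in $\Omega_{ext}$ with the Neumann condition on the flat boundary, and it is outgoing. I will expand $w^{(i)}$ in the modes $\varphi_{1,\ell}$ on $\{|y|>R_0\}$. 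Each mode $P_{1,\ell}w^{(i)}$ is an outgoing solution of $(-\Delta_y-(\lambda-\lambda_{1,\ell}))v=0$ in an exterior region of ${\bf R}^n$, so it is determined by its far-field pattern. When $\lambda>\lambda_{1,\ell}$ I will use Rellich's uniqueness theorem, or an expansion in spherical harmonics with Hankel functions. When $\lambda<\lambda_{1,\ell}$ the mode is exponentially decaying and is determined by its leading asymptotic coefficient, via Lemma \ref{ExpansionofDelta-mu-1f} and modified Bessel functions. Unique continuation across $\{|y|=R_0\}$ then determines $w^{(i)}$ on $\mathcal O$, provided $\mathcal O$ lies in $\{|y|>R_0\}$. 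If it does not, I will shrink $R_0$ in the definition of the exterior region, which costs nothing because the Helmholtz solution in the flat region is analytic in $y$.

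The second step identifies these asymptotic coefficients with data from $S_{11}$. For open modes, Lemma \ref{P1lRlambda+i0fasympto}(1) gives the coefficient of $u^{(i)}$ as $P_{1,\ell}\mathcal F^{(i)}_{1,+}(\lambda)F$. Writing $\mathcal F^{(i)}_{1,+}(\lambda)F$ in terms of the eigenfunctions $\Psi^{(i)}_{1,\ell,-}$, and using $\Psi^{(i)}_{1,\ell,-}-\chi_1\Psi^{(0)}_{1,\ell}\simeq$ (incoming waves with coefficients $\mathcal A^{(i)}_{1m,1\ell}$) from Lemma \ref{LemmaPjmPsi-Psi0asympto}, I get the following. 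In $\Omega_{ext}$, $\Psi^{(i)}_{1,\ell,-}-\chi_1\Psi^{(0)}_{1,\ell}$ is a radiating solution whose far field is given by the amplitudes $\mathcal A^{(i)}_{1m,1\ell}(\lambda)$, and by the uniqueness argument of the first step it is therefore determined by them pointwise in $\Omega_{ext}$. Hence the outgoing coefficients of $w^{(i)}$ are $\int\overline{(\Psi^{(i)}_{1,\ell,-}-\chi_1\Psi^{(0)}_{1,\ell})}F$ modulo free terms, and these are determined by $S^{(i)}_{11}(\lambda)$ through (\ref{rel.SjkAjmkn}). The equality $S^{(1)}_{11}=S^{(2)}_{11}$ yields equality of all the physical amplitudes $\mathcal A^{(i)}_{1m,1n}(\lambda)$ with $\lambda_{1,m},\lambda_{1,n}<\lambda$.

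The third step, which I expect to be the main obstacle, handles the closed (evanescent) modes $\lambda_{1,\ell}>\lambda$. Their contribution to $w^{(i)}$ near $\mathcal O$ is governed by the non-physical amplitudes $\mathcal A^{(nph),(i)}_{1m,1n}(\lambda)$ and the non-physical eigenfunctions $\Phi^{(i)}_{1,n,-}$ of Lemma \ref{P1lRlambda+i0fasympto}(2). These are not directly visible in $S_{11}(\lambda)$ at the same energy. My plan is to use analytic continuation. By (\ref{NewA1m1n}) and Lemma \ref{LemmaA1m1nanalyticcont}, $A^{(i)}_{1m,1n}$ is analytic in ${\bf C}_+$ and continuous up to ${\bf R}\setminus\mathcal E(H^{(i)})$. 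Since the physical amplitudes agree on $(\max\{\lambda_{1,m},\lambda_{1,n}\},\infty)$ minus a discrete set, the identity theorem, applied to the boundary values from ${\bf C}_+$ (for instance via the Schwarz reflection or edge-of-the-wedge principle for the difference), gives equality of the continuations $A^{(nph),(1)}_{1m,1n}=A^{(nph),(2)}_{1m,1n}$ for all real $\lambda$ outside the exceptional sets. Care is needed because the agreement holds only on a real interval. I would argue instead that the difference is analytic in ${\bf C}_+$ with continuous real boundary values that vanish on a real interval, so it vanishes identically by the Privalov/Luzin uniqueness theorem. Feeding these equalities into the second step with Lemma \ref{ExpansionofDelta-mu-1f} gives $w^{(1)}=w^{(2)}$ on $\mathcal O$, hence $\mathbb U^{(1)}_{\mathcal O,+}(\lambda)=\mathbb U^{(2)}_{\mathcal O,+}(\lambda)$. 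The incoming case follows by taking complex conjugates, since $R(\lambda-i0)=\overline{R(\lambda+i0)}$ for the real operator $H$.
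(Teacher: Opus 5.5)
Your proposal is correct and follows essentially the paper's route. The paper works mode by mode in the flat end $\Omega_{ext}$. For the open channels it identifies the far field of the generalized eigenfunctions with $S_{11}(\lambda)$, uses Rellich uniqueness, and passes by duality to the far field of $R(\lambda+i0)F$; for the closed channels it uses the non-physical amplitudes together with the spherical-harmonic/radial decay argument. Your boundary-uniqueness (Schwarz reflection) argument for why the non-physical amplitudes coincide, and your reduction of the incoming case to the outgoing one by complex conjugation, make explicit two points the paper only asserts.
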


\begin{proof}  
 We consider the outgoing case.
Putting
\begin{equation}
 u^{(i)} =R^{(i)}(\lambda +i0)F,\quad v_{\ell}=P_{1,\ell}(u^{(1)}-u^{(2)}),
\label{Defineui=Rlambdai0F}
\end{equation}
we observe the far fields of $P_{1,\ell}u^{(1)}$ and $P_{1,\ell}u^{(2)}$. 

\medskip
\noindent
(I) {\it Spherical wave channels}: 
 Let us consider the case $\lambda > \lambda_{1,\ell}$. 
 Let ${\bf h}_1(\lambda) = L^2(S^{n-1})\times {\rm span}\,\{\varphi_{1,\ell} \, ; \, \lambda_{1,\ell} < \lambda\}$ be as in (\ref{hlambdadefine}).
Take $a_1 \in  {\bf h}_1(\lambda)$ arbitrarily, and put $w^{(i)} = \mathcal F_{+}^{(i)}(\lambda)^{\ast}a$, where $a = (a_1,0,\cdots,0)$, and $\mathcal F_{+}^{(i)}(\lambda)$ is $\mathcal F_{+}(\lambda)$ for $\Omega^{(i)}$. 
We prove that
\begin{equation} 
P_{1,\ell}(w^{(1)} - w^{(2)}) = 0, \quad {\it on} \quad \Omega_{ext}.
\label{Pellw(1)=Pellw2}
\end{equation}
In fact, thanks to (\ref{EigenEquation}),
\begin{equation}
(- \Delta_{G}-\lambda)(w^{(1)}- w^{(2)}) = 0 \quad
{\it on} \quad \Omega_{ext}.
\label{Th2.1.1Deltau1-u2=0}
\end{equation}
Multiplying the projection $P_{1,\ell}$ to (\ref{Th2.1.1Deltau1-u2=0}), we have
\begin{equation}
( - \Delta_y - \lambda + \lambda_{1,\ell})P_{1,\ell}(w^{(1)} - w^{(2)}) = 0.
\label{Deltay-lambdalambda1ellPellw=0}
\end{equation}

We show that
\begin{equation}
 P_{1,\ell}(w^{(1)} - w^{(2)}) \simeq 0 \quad {\it on} \quad \Omega_{ext}.
\label{Pellw(1)=Pellw(2)}
\end{equation}
In fact, as $\mathcal F_{1,\ell,+}(\lambda)^{\ast} = \Big(\chi_1 - R(\lambda + i0)\mathcal V_1\Big)\mathcal F_{1,\ell}^{(0)}(\lambda)^{\ast}$ by (\ref{mathcalFjellpmlambdadefine}), we have  by Lemma \ref{P1lRlambda+i0fasympto},
\begin{equation}
\begin{split}
\mathcal F_{1,\ell,+}(\lambda)^{\ast}a - \chi_1\mathcal F_{1,\ell}^{(0)}(\lambda)^{\ast}a& \simeq C(\lambda) \frac{e^{i|y|\sqrt{\lambda - \lambda_{1,\ell}}}}{|y|^{(n-1)/2}}\mathcal F_{1,+}^{(phy)}(\lambda)\mathcal V_1 \mathcal F_{1,\ell}^{(0)}(\lambda)^{\ast} a\\
& \simeq C(\lambda) \frac{e^{i|y|\sqrt{\lambda - \lambda_{1,\ell}}}}{|y|^{(n-1)/2}}(\lambda)A_{11}(\lambda)P_{1,\ell}a
\end{split}
\nonumber
\end{equation}
for some constant $C(\lambda)$, 
where $A_{11}(\lambda)$ is the scattering amplitude for $\Omega^{(1)} = \Omega^{(2)}$ (cf. (\ref{DefineAjklambda})).
The assumption
 $S^{(1)}_{11}(\lambda) = S^{(2)}_{11}(\lambda)$ then implies 
(\ref{Pellw(1)=Pellw(2)}).

Recall the classical Rellich type theorem  : If $w$ satisfies $( - \Delta - E)w = 0$ for $|y| > R_1 > 0$ for some constants $E > 0, R_1>0$, and $\frac{1}{R}\int_{R_1 < |y|<R}|w(y)|^2dy \to 0$ as $R \to \infty$, there exists $R_2 > 0$ such that $w(y) = 0$ for $|y| >R_2$. By virtue of (\ref{Deltay-lambdalambda1ellPellw=0}) and (\ref{Pellw(1)=Pellw(2)}), we have thus proven (\ref{Pellw(1)=Pellw2}).

 Take $F \in L^2(\Omega^{(1)}_{1})= L^2(\Omega^{(2)}_{1})$ with support in $\mathcal O\subset \Omega_{1}^{(1)}
 = \Omega_{1}^{(2)}$.
Then 
\begin{eqnarray*}
 (\mathcal F_{1,\ell,+}^{(1)}(\lambda)F,a_1) &=&(F,\mathcal F_{1,\ell,+}^{(1)}(\lambda)^*a_1) = (F,P_{1,\ell}w^{(1)})_{L^2(\mathcal O)} \\
 &=& (F,P_{1,\ell}w^{(2)})_{L^2(\mathcal O)} 
 =(F,\mathcal F_{1,\ell,+}^{(2)}(\lambda)^* a_1) \\
 &=& (\mathcal F_{1,\ell, +}^{(2)}(\lambda) F,a_1).
\end{eqnarray*}
 As this holds for all $a_1 \in {\bf h}_1(\lambda) \supset {\rm Range}(\mathcal F_{1,\ell, +}^{(i)})$,
 this implies
\begin{equation}
 \mathcal F_{1,\ell,+}^{(1)}(\lambda) F =\mathcal F_{1,\ell,+}^{(2)}(\lambda) F.
\label{mathcalF1(1)=mathcalF1(2)}
\end{equation}

Let us return to (\ref{Defineui=Rlambdai0F}).
In view of Lemma \ref{LemmaRlambdafexpansion} and (\ref{mathcalF1(1)=mathcalF1(2)}), 
we see that the far fields of $P_{\ell}u^{(1)}$ and $P_{\ell}u^{(2)}$ coincide. Then,
\begin{equation}
 (- \Delta_y - \lambda + \lambda_{1,\ell})v_{\ell} = 0, \quad {\rm in} \quad \Omega_{ext}, \\
\label{C2S1Delta-lambdav=0}
\end{equation}
 and the far field of $v_{\ell}$ in  $\Omega^{(1)}=\Omega^{(2)}$ is zero, so,
 $v_{\ell}\simeq 0$. Then $v_{\ell}=0$ by the Rellich-type Theorem. 

\medskip
\noindent
(II) {\it Exponentially decaying channels}: 
 Assume that  $\lambda_{1,\ell} > \lambda$. 
 We can argue in the same way as above with the difference that the resolvent $(- \Delta_y - (\lambda - \lambda_{1,\ell}))^{-1}$ now decays exponentially. As the non-physical scattering amplitudes also coincide, we then see that $v_{\ell}$ decays faster than $e^{-r\sqrt{\lambda_{\ell}-\lambda}}r^{-(n-1)/2}$. Then by expanding by spherical harmonics and reducing the problem to the radial equation, we see that $v_{\ell} = 0$ near infinity. Therefore, $u^{(1)} - u^{(2)} = 0$ near infinity. Since $(H^{(0)}_0- \lambda)(u^{(1)} - u^{(2)}) = 0$ holds on $|y| > R_0$, we then have $u^{(1)} = u^{(2)}$ on $|y| > R_0$, in particular on $\mathcal O$. 
\end{proof}

\section{The remaining proof}
 If we pass to the BSP, we are in the same situation as in \cite{IKL10}, \S 6, where the problem is reduced to $(\Omega_1 \cap \{|x| < R_1\})\cup \cdots \cup \Omega_N \cup \mathcal K$, each $\Omega_i$ being a cylindrical domain, and the boundary is $\overline{\Omega_1} \cap \{x= R_1\}$, $x \in {\bf R}$. In the present paper, the cylindrical domain $\Omega_i$ is replaced by a slab and the boundary is a band $\overline{\Omega_1} \cap \{|y| = R_1\}$, $y \in {\bf R}^n, n \geq 2$. We can then mimick the BC method in  \cite{IKL10}, \S 6 word by word to get the same conclusion. We have thus completed the proof of Theorem \ref{MainTheorem}.
 
 If we use the source-to-solution map, the arguments in \cite{IL26(2)} work in the same way with a slight change of notation.
We do not repeat the details.

\end{document}